\newcommand{\clC}[1]{\mathbf{C}(#1)}
\newcommand{\clS}{\mathbf{S}}
\newcommand{\clT}{\mathbf{T}}
\newcommand{\clB}{\mathbf{B}}
\newcommand{\clGS}{\mathbf{GS}}
\newcommand{\clGT}{\mathbf{GT}}
 \newcommand{\clGH}[1]{\mathbf{GH}(#1)}
\newcommand{\clG}[1]{\mathbf{G}(#1)}
\newcommand{\clH}[1]{\mathbf{H}(#1)}
\def\CA{{\mathcal A}}
\def\CB{{\mathcal B}}
\def\CP{{\mathcal P}}
\newcommand{\BQ}{{\mathbb Q}}
\newcommand{\BZ}{{\mathbb Z}}
\newcommand{\fm}{{\mathfrak m}}
\newcommand{\kk}{\mathsf{k}}
\newcommand{\Poi}[2]{\operatorname{P}^{#1}_{#2}}
\newcommand{\im}{\operatorname{Im}}
\newcommand{\HH}[2]{\operatorname{H}_{#1}(#2)}
\newcommand{\Span}{\operatorname{Span}}
\newcommand{\rank}{\operatorname{rank}}
\newcommand{\Ker}{\operatorname{Ker}}
\newcommand{\Coker}{\operatorname{Coker}}
\newcommand{\depth}[1]{\operatorname{depth}{#1}}
\newcommand{\Tor}[4]{\operatorname{Tor}_{#1}^{#2}(#3,#4)}
\newcommand{\codepth}{\operatorname{codepth}}
\theoremstyle{plain}
\newtheorem{theorem}{Theorem}[section]
\newtheorem{corollary}[theorem]{Corollary}
\newtheorem{lemma}[theorem]{Lemma}
\newtheorem{proposition}[theorem]{Proposition}
\theoremstyle{definition}
\newtheorem{examples}[theorem]{Examples}
\newtheorem{remark}[theorem]{Remark}
\newtheorem{summary}[theorem]{Summary}
\newtheorem{construction}[theorem]{Construction}
\numberwithin{equation}{theorem}
\newtheorem{question}[theorem]{Question}
\begin{document}
\title{A truncated minimal free resolution of the residue field}
\author[V.~C.~Nguyen]{Van C.~Nguyen}
\address{Department of Mathematics, United States Naval Academy, Annapolis, MD 21402, U.S.A.}
\email{vnguyen@usna.edu}
\urladdr{https://sites.google.com/view/vcnguyen}
\author[O.~Veliche]{Oana Veliche}
\address{Department of Mathematics, Northeastern University, Boston, MA 02115, U.S.A.}
\email{o.veliche@northeastern.edu}
\urladdr{https://web.northeastern.edu/oveliche}
%\thanks{}
\date{\today}  %\today
\subjclass[2010]{13D02, 13D07, 13H10, 13C05.}
\keywords{Golod rings, minimal free resolution, Tor algebra, Massey products}

%%%%%%%%%%%%%%%%%%%%%%%%%%%%%%%%%%%%%%%%%%%%%%%

\begin{abstract}
In a paper in 1962, Golod proved that the Betti sequence of the residue field of a local ring attains an upper bound given by Serre if and only if the homology algebra of the Koszul complex of the ring has trivial multiplications and trivial Massey operations. This is the origin of the notion of Golod ring. Using the Koszul complex components he also constructed a minimal free resolution of the residue field. In this article, we extend this construction up to degree five for \emph{any} local ring. We describe how the multiplicative structure and the triple Massey products of the homology of the Koszul algebra are involved in this construction. As a consequence, we provide explicit formulas for the first six terms of  a sequence that measures how far the ring is from being Golod.
\end{abstract}

\maketitle
\thispagestyle{empty}

%%%%%%%%%%%%%%%%%%%%%%%%%%%%%%%%%%%%%%%%%%%%%%%
%%%%%%%%%%%%%%%%%%%%%%%%%%%%%%%%%%%%%%%%%%%%%%%

\section{Introduction}
\label{sec:Introduction}

Throughout the paper, $(R,\fm,\kk)$ is a  local noetherian ring with maximal ideal $\fm$ and residue field $\kk$, of embedding dimension $n$ and codepth $c=n-\depth{}R$.  Let $K^R$ be the Koszul complex  of $R$ on a minimal set of generators of  $\fm$.
Serre pointed out that there is always a coefficient-wise inequality between  the Poincar\'e series of the  $R$-module $\kk$ and  a series of rational form involving the ranks of the homologies of $K^R$:
 \begin{equation}
 \label{golodP}
  \Poi{R}{\kk}(t):=\sum_{i=0}^\infty\rank_{\kk} \Tor iR\kk\kk t^i \preccurlyeq \frac{(1+t)^n}{1 -\sum_{i=1}^c \rank_{\kk} \HH{i}{K^R} t^{i+1}}.
\end{equation}
In \cite{G}, Golod proved that a ring $R$ attains this upper bound if and only  if the graded-commutative algebra $\HH{}{K^R}$ has trivial multiplications and trivial Massey operations; such a ring is now called a {\it Golod ring}. In the same paper, he also constructed the minimal free resolution of the $R$-module $\kk$ in terms of $K^R$.

In \cite[Corollary 5.10]{Av74}, Avramov proved that the Poincar\'e series $\Poi{R}{\kk}(t)$ is completely determined by the structure of the Koszul homology algebra $A:=\HH{}{K^R}$ as an algebra with Massey operations. One can now ask the following question:
\begin{question}
\label{question} For \emph{any} local ring $(R,\fm,\kk)$, given the knowledge of its Koszul homology algebra $A$, with its products and Massey operations, how does one construct a \emph{minimal} free resolution of the residue field $\kk$?
\end{question}
In this paper, we answer this question explicitly up to degree five. For any local ring $R$, using the Koszul complex $K^R$ as building blocks and the graded-commutative structure of its homology $A$, we construct a  minimal free resolution of the $R$-module $\kk$, up to degree five, see Construction~\ref{construction} and Theorem \ref{main}:
\begin{equation*}
{F} :\qquad F_5\xrightarrow{\partial_5^F}F_4\xrightarrow{\partial_4^F}F_3\xrightarrow{\partial_3^F}F_2\xrightarrow{\partial_2^F}F_1\xrightarrow{\partial_1^F}F_0 \to \kk \to 0.
\end{equation*}

The higher degrees of the resolution can be extended similarly for some special cases of $R$, but in general it requires an understanding of higher Massey products, which remains elusive and is left for future projects. Indeed, Massey operations can be represented by using the $A_\infty$-algebra structure, which the Koszul homology algebra $A$ possesses, see for example, \cite{LPWZ} by Lu, Palmieri, Wu, and Zhang. Moreover, the $A_\infty$-structure was also used by Burke in \cite{B} to construct certain projective resolutions. These connections suggest that perhaps the $A_\infty$-algebra structure of $A$ may play a role in giving a complete answer to Question~\ref{question} from this perspective.

To prepare for Construction~\ref{construction}, in Section~\ref{sec multiplication} we analyze in detail the multiplicative structure of the algebra $A$, up to degree four, in terms of  bases of $A_i$ and  $A_i\cdot A_j$, and necessary maps. Moreover, in degree four  the Massey products appear for the first time, as  ternary Massey products  $\langle A_1,A_1,A_1\rangle$; we give a description of the elements of this set in Proposition~\ref{massey}. 

In Section~\ref{sec applications}, we obtain several direct applications of Theorem \ref{main}. In Corollary~\ref{betti}, we explicitly describe the Betti numbers $\beta_i:=\rank_{\kk} \Tor iR\kk\kk$, up to degree five, in terms of the multiplicative invariants of the Koszul homology algebra $A$. Let $a_i:=\rank_{\kk} A_i$ and consider the difference of series 
$$\CP(t):=\frac{(1+t)^n}{1 -\sum_{i=1}^c a_i t^{i+1}}-\Poi{R}{\kk}(t)$$ 
that measures how far the ring $R$ is from being Golod, that is, how far the Betti numbers of $R$ are from their maximum possible values. In Proposition~\ref{prop:P}, we compute the first six coefficients of $\CP(t)$ in terms of multiplicative invariants of $A$:
$$\CP_0=\CP_1=\CP_2=0,\quad  \CP_3=q_{11},\quad \CP_4=(n+1)q_{11}+q_{12},\quad \CP_5=\big(\textstyle{{n+1} \choose 2}+2a_1\big)q_{11}+(n+1)q_{12}+a-b,$$
where $q_{ij}=\rank_\kk(A_i \cdot A_j)$, and $a,b$ are described in Summary~\ref{summary}.  For any local ring $R$ of embedding dimension $n$ with rational Poincar\'e series of the form $\Poi{R}{\kk}(t)=(1+t)^n/d(t)$, we express the denominator, up to degree five, as follows:
$$d(t)=\big(1-\textstyle{\sum_{i=1}^c a_it^{i+1}}\big)+ q_{11}t^3+(q_{11}+q_{12})t^4+(q_{12}-b+a)t^5 +f(t)t^6,$$
for some $f(t)\in\BZ[t]$, see Proposition \ref{Rational Poincare}.  We consider a few classes of rings with such rational Poincar\'e series, see Corollary~\ref{yoshino} and Examples \ref{exp:yoshino} and \ref{roos}, and using the coefficients of the denominator we obtain algebraic invariants of their Koszul homology algebra $A$. The Poincar\'e series $\Poi{R}{\kk}(t)$ can be described by using the deviations $\varepsilon_i$'s, see for example \cite{Av1}. In Corollary~\ref{deviations}, we give a description of the first five deviations in terms of the algebraic invariants of $A$.

In Section \ref{sec: example}, we illustrate Construction~\ref{construction} through an example of a ring of codepth 4 examined by Avramov in \cite{Av74}. In that example, Avramov provided a nontrivial indecomposable Massey product element in $\langle A_1,A_1,A_1\rangle$  that does not come from  multiplications of the homology. Using Proposition~\ref{massey},  we prove that, up to a scalar, this is the only element with this property, modulo the products in homology.

%%%%%%%%%%%%%%%%%%%%%%%%%%%%%%%%%%%%%%%%%%%%%%%
%%%%%%%%%%%%%%%%%%%%%%%%%%%%%%%%%%%%%%%%%%%%%%%

\section{Multiplicative structure on the homology of the Koszul algebra}
\label{sec multiplication}

Let $(R,\fm,\kk)$ be a local ring of embedding dimension $n$ and codepth $c$. The differential graded algebra structure on  the Koszul complex $K = K^R$ on a minimal set of generators of the maximal ideal $\fm$ induces a graded-commutative algebra structure on the homology of $K$:
$$A:=\HH{}{K} =A_0 \oplus A_1 \oplus A_2 \oplus \dots \oplus A_c.$$
In this section we discuss this multiplicative structure up to degree four. We will use this structure extensively in constructing a truncated minimal free resolution of the residue field over $R$ in the next section.

The following notation is used throughout the paper: the differential map on $K$ is denoted by $\partial^K$, a homogeneous element  of degree $i$ in the Koszul complex $K$ is denoted by $\pi^i$, a representative in $K_i$  of an element in the homology  $A_i$ is denoted by $p^i$, and elements of $R$ are denoted by $\alpha$'s, $\beta$'s, and $\gamma$'s. The subscript of a homogeneous element indicates its index in a tuple and the superscript indicates its homological degree. Set
\begin{equation}
\label{a_i}
a_i:=\rank_{\kk} (A_i)\qquad\text{and}\qquad q_{ij}:= \rank _\kk (A_i \cdot A_j),\  \text{for all}\  0\leq i,j\leq c.
\end{equation}

It is clear that $A_0=\kk$.
For each $1\leq i \leq a_1$ we consider  $z_i^1\in\Ker\partial_1^ K$  such that
\begin{equation}
\label{z1}
\{[z^1_i]\}_{i=1,\dots ,a_1} \quad \text{is a  basis of}\ A_1.
\end{equation} 
In particular, every element in $\Ker \partial_1^K$ can be written as a sum of elements in $\im \partial^K_2$ and a linear combination of $\{z^1_i\}_{i=1,\dots, a_1}$.

%%%%%%%%%%%%%%%%%%%%%%%%%%%%%%%%%%%%%%%%%%%%%%%%%%%%%%%%%%%%%%%%%%%%%%%%%%%%%%%%
\chunk {\bf Products in degree two.}
Let  $\overline{A_2}$ be a $\kk$-subspace of $A_2$ such that
$$A_2=(A_1\cdot A_1)\oplus \overline {A_2}.$$
For each $1\leq \ell\leq a_2-q_{11}$, we consider  $z_\ell^2\in\Ker\partial_2^ K$  such that
\begin{equation}
\label{z2}
 \{[z^2_\ell]\}_{\ell=1,\dots ,a_2-q_{11}} \quad \text{is a basis of}\  \overline{A_2}.
 \end{equation}
In particular, every element in $\Ker \partial_2^K$ can be written as a sum of elements in $\im \partial^K_3$, a linear combination of $\{z^1_i\wedge z^1_j\}_{i,j=1, \dots, a_1}$, and a linear combination of $\{z^2_\ell\}_{\ell=1, \dots, a_2-q_{11}}$.\\

Consider the multiplication map:
\begin{equation}
  \label{def phi}
\phi_1:  A_1\otimes A_1 \to A_2,\quad\text{defined by}\quad
\phi_1([x]\otimes[y])=[x]\wedge[y].
\end{equation}
As $\im \phi_1=A_1\cdot A_1$, we have   $\rank_{\kk}(\Ker\phi_1)=a_1^2-q_{11}$.
For all $1\leq i\leq a_1$ and $1\leq s\leq a_1^2-q_{11}$, we choose ${\widetilde p}^1_{si}\in\Ker \partial_1^K$ such that
\begin{equation}
    \label{def p1}
    \Big\{\sum_{i=1}^{a_1}[z^1_i]\otimes[\widetilde p^1_{si}]\Big\}_{s=1,\dots,a_1^2-q_{11}}\quad\text{is a basis of $\Ker\phi_1$.}
  \end{equation}
For each $s$,  we have $\sum_{i=1}^{a_1}[z^1_i] \wedge [\widetilde p^1_{si}]=0$, so there exists ${\widetilde\pi}^3_s\in K_3$ such that
\begin{equation}
  \label{def pi3}
\partial_3^K({\widetilde\pi}^3_s)=\sum_{i=1}^{a_1}{ z^1_i\wedge{\widetilde p}^1_{si}}.
\end{equation}

\begin{lemma}
  \label{ind p1}
  Let ${\widetilde p}^1_{si}\in \Ker \partial^K_1$ be as in \eqref{def p1}. Then the vectors $\{\big([\widetilde p^1_{s1}],\dots, [\widetilde p^1_{sa_1}]\big)\}_{s=1,\dots,a_1^2-q_{11}}$ in $A_1^{a_1}$ are linearly independent.
\end{lemma}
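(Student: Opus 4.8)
The plan is to identify the list of tuples in the statement with the chosen basis of $\Ker\phi_1$ under the canonical isomorphism $A_1\otimes_\kk A_1\cong A_1^{a_1}$, so that their linear independence becomes immediate. Since $\{[z^1_i]\}_{i=1,\dots,a_1}$ is a $\kk$-basis of $A_1$ by \eqref{z1}, the $\kk$-linear map
\[
\psi\colon A_1^{a_1}\lra A_1\otimes_\kk A_1,\qquad \psi(v_1,\dots,v_{a_1})=\textstyle\sum_{i=1}^{a_1}[z^1_i]\otimes v_i,
\]
is an isomorphism of $\kk$-vector spaces; this is the only structural input I would use.

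By the choice made in \eqref{def p1}, for each $s=1,\dots,a_1^2-q_{11}$ one has $\psi\big([\widetilde p^1_{s1}],\dots,[\widetilde p^1_{sa_1}]\big)=\sum_{i=1}^{a_1}[z^1_i]\otimes[\widetilde p^1_{si}]$, and these $a_1^2-q_{11}$ elements form a basis of $\Ker\phi_1\subseteq A_1\otimes_\kk A_1$, hence are $\kk$-linearly independent. Thus a relation $\sum_s\lambda_s\big([\widetilde p^1_{s1}],\dots,[\widetilde p^1_{sa_1}]\big)=0$ in $A_1^{a_1}$ maps under the injective map $\psi$ to the relation $\sum_s\lambda_s\sum_{i}[z^1_i]\otimes[\widetilde p^1_{si}]=0$ among those basis elements, forcing $\lambda_s=0$ for all $s$. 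This is exactly the asserted independence.

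I do not expect a genuine obstacle here: the statement is pure linear algebra once the identification $\psi$ is in place. The only point that deserves care is bookkeeping — namely that the tuples are required to be independent with entries in the homology $A_1$ (not merely in the module of cycles $\Ker\partial^K_1$), so one must be sure $\psi$ is genuinely defined on $A_1^{a_1}$ and that the basis in \eqref{def p1} is already being read inside $A_1\otimes_\kk A_1$; with that understood, the argument above applies verbatim.
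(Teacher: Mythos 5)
Your proof is correct and is essentially the paper's own argument: the paper's step of ``tensoring with $[z^1_i]$ and taking the sum over $i$'' is exactly your isomorphism $A_1^{a_1}\cong A_1\otimes_\kk A_1$, and both arguments then conclude from the linear independence of the chosen basis $\big\{\sum_{i}[z^1_i]\otimes[\widetilde p^1_{si}]\big\}_s$ of $\Ker\phi_1$ in \eqref{def p1}. (One cosmetic remark: the letter $\psi$ is already reserved in \eqref{def psi} for a different map, so you should rename your identification to avoid a clash.)
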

\begin{proof}
  If $\alpha_s$ is in $R$ such that $\sum_{s=1}^{a_1^2-q_{11}}\big([\widetilde p^1_{s1}],\dots, [\widetilde p^1_{sa_1}]\big)\wedge [\alpha_s]=0$, then $$\sum_{s=1}^{a_1^2-q_{11}}[\widetilde p^1_{si}]\wedge [\alpha_s]=0,\quad\text{ for all } 1\leq i\leq a_1.$$
  By tensoring with $[z^1_i]$ and taking the sum over $i$ we get
  $$
  \sum_{i=1}^{a_1}[z^1_i]\otimes \sum_{s=1}^{a_1^2-q_{11}}[\widetilde p^1_{si}]\wedge [\alpha_s]
  =\sum_{s=1}^{a_1^2-q_{11}}\Big(\sum_{i=1}^{a_1}[z^1_i]\otimes [\widetilde p^1_{si}]\Big)\wedge [\alpha_s]
  =0.
  $$  The desired conclusion now follows from \eqref{def p1}.
\end{proof}

\begin{proposition}
\label{z1z2}
  Let $\{[z^1_i]\}_{i=1,\dots, a_1}$ and $\{[z^2_\ell]\}_{\ell=1,\dots,a_2-q_{11}}$ be as in \eqref{z1} and  \eqref{z2}, respectively.
  If  $\beta_{ij}$ and $\alpha_\ell$ are  in $R$ such that
$$\sum_{i,j=1}^{a_1}[z^1_i]\wedge[z_j^1]\wedge[\beta_{ij}]+ \sum_{\ell=1}^{a_2-q_{11}} [z^2_\ell]\wedge[\alpha_\ell] =0,$$
then the following hold:
\begin{enumerate}[$(a)$]
  \item $[\alpha_\ell]=0$ for all $1\leq \ell \leq a_2-q_{11}$;
\item There exist $\gamma_s$ in $R$ such that  for all $1\leq i\leq a_1$
$$\sum_{j=1}^{a_1}[z_j^1]\wedge[\beta_{ij}]=\sum_{s=1}^{a_1^2-q_{11}}[\widetilde p^1_{si}]\wedge[\gamma_s],$$
where $\widetilde p^1_{si}$ is as in \eqref{def p1}.
\end{enumerate}
\end{proposition}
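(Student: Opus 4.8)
The plan is to use the decomposition $A_2=(A_1\cdot A_1)\oplus\overline{A_2}$ to split the given relation into two independent pieces, and then to recognize part $(b)$ as a statement about $\Ker\phi_1$, which is pinned down by \eqref{def p1}. First I would note that, since the classes $[\beta_{ij}]$ and $[\alpha_\ell]$ lie in $A_0=\kk$ and so act by scalar multiplication, the element $\sum_{i,j}[z^1_i]\wedge[z^1_j]\wedge[\beta_{ij}]$ lies in $A_1\cdot A_1$, while $\sum_\ell [z^2_\ell]\wedge[\alpha_\ell]=\sum_\ell[\alpha_\ell]\cdot[z^2_\ell]$ lies in $\overline{A_2}$ by \eqref{z2}. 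Their sum being zero and $(A_1\cdot A_1)\cap\overline{A_2}=0$, each summand vanishes separately; from $\sum_\ell[\alpha_\ell]\cdot[z^2_\ell]=0$ and the linear independence of $\{[z^2_\ell]\}$ in \eqref{z2} we conclude $[\alpha_\ell]=0$ for all $\ell$, proving $(a)$, and we are left with the relation $\sum_{i,j}[z^1_i]\wedge[z^1_j]\wedge[\beta_{ij}]=0$ in $A_1\cdot A_1$.

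For $(b)$, set $w_i:=\sum_{j=1}^{a_1}[z^1_j]\wedge[\beta_{ij}]\in A_1$ for each $i$; by associativity the relation above reads $\sum_{i=1}^{a_1}[z^1_i]\wedge w_i=0$, that is, $\phi_1\big(\sum_i[z^1_i]\otimes w_i\big)=0$, so $\sum_i[z^1_i]\otimes w_i\in\Ker\phi_1$. By \eqref{def p1} the elements $\big\{\sum_i[z^1_i]\otimes[\widetilde p^1_{si}]\big\}_{s}$ form a basis of $\Ker\phi_1$, so there are scalars in $\kk$, which I lift to $\gamma_s\in R$, with
\[
\sum_{i=1}^{a_1}[z^1_i]\otimes w_i=\sum_{s=1}^{a_1^2-q_{11}}[\gamma_s]\cdot\Big(\sum_{i=1}^{a_1}[z^1_i]\otimes[\widetilde p^1_{si}]\Big)=\sum_{i=1}^{a_1}[z^1_i]\otimes\Big(\sum_{s=1}^{a_1^2-q_{11}}[\widetilde p^1_{si}]\wedge[\gamma_s]\Big).
\]
Since $\{[z^1_i]\}$ is a basis of $A_1$ by \eqref{z1}, one has $A_1\otimes_\kk A_1=\bigoplus_i[z^1_i]\otimes A_1$, and comparing the $[z^1_i]$-components yields $w_i=\sum_{s}[\widetilde p^1_{si}]\wedge[\gamma_s]$ for every $i$, which is exactly $(b)$.

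The whole argument is a bookkeeping computation in $\kk$-vector spaces; the only subtlety is that the elements $\gamma_s$ produced in the last step must be independent of $i$, and this is precisely what is ensured by the fact that \eqref{def p1} packages a basis of $\Ker\phi_1$ in the form $\sum_i[z^1_i]\otimes[\widetilde p^1_{si}]$ rather than as an unstructured spanning set. I do not expect to need Lemma~\ref{ind p1} for this proposition, though it confirms that the data $([\widetilde p^1_{s1}],\dots,[\widetilde p^1_{sa_1}])$ record $a_1^2-q_{11}$ genuinely independent directions in $A_1^{a_1}$.
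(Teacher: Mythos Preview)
Your proposal is correct and follows essentially the same route as the paper: part~(a) is obtained from the direct-sum decomposition $A_2=(A_1\cdot A_1)\oplus\overline{A_2}$, and part~(b) by observing that $\sum_i[z^1_i]\otimes w_i\in\Ker\phi_1$ and invoking the basis in \eqref{def p1}. Your write-up is in fact slightly more explicit than the paper's, which simply says ``the desired conclusion now follows from \eqref{def p1}'' without spelling out the component comparison.
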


\begin{proof} (a): The first  sum in the hypothesized equality is in $A_1 \cdot A_1$. The elements $\{ [z^2_\ell] \}_{\ell=1, \dots, a_2-q_{11}}$ form a basis  in $A_2$ that completes a basis of  $A_1 \cdot A_1$, thus  $[\alpha_\ell]=0$ for all $1\leq \ell\leq a_2-q_{11}$.

  (b): By (a) it follows that $0=\sum_{i,j=1}^{a_1}[z^1_i]\wedge[z_j^1]\wedge[\beta_{ij}]=\sum_{i=1}^{a_1}[z^1_i]\wedge \Big(\sum_{j=1}^{a_1}[z_j^1]\wedge [\beta_{ij}]\Big),$ which implies that
$\sum_{i=1}^{a_1}[z^1_i]\otimes \Big(\sum_{j=1}^{a_1}[z_j^1]\wedge [\beta_{ij}]\Big)$ is in $\Ker\phi_1.$
The desired conclusion now follows from \eqref{def p1}.
\end{proof}

%%%%%%%%%%%%%%%%%%%%%%%%%%%%%%%%%%%%%%%%%%%%%%%%%%%%%%%%%%%%%%%%%%%%%%%%%%%%%%%%
\chunk {\bf Products in degree three.}
Let  $\overline{A_3}$ be a $\kk$-subspace of $A_3$ such that
$$A_3=(A_1\cdot A_2)\oplus \overline {A_3}=(A_1\cdot A_1\cdot A_1+A_1\cdot\overline{A_2})\oplus \overline{A_3}.$$
For each $1\leq t \leq a_3-q_{12}$, we consider  $z_t^3\in\Ker\partial_3^ K$  such that
\begin{equation}
\label{z3}
\{[z^3_t]\}_{t=1,\dots ,a_3-q_{12}} \quad \text{is basis of}\ \overline{A_3}.
\end{equation}
In particular, every element in $\Ker\partial_3^K$ can be written as a sum of  elements in $\im\partial^K_4$, a linear combination of   $\{z^1_i\wedge z^1_j\wedge z^1_k\}_{i,j,k=1,\dots, a_1}$, a linear combination of $\{z^1_i\wedge z^2_\ell\}_{\substack{i=1,\dots,a_1\\ \ell=1,\dots,a_2-q_{11}}}$,
and a linear combination of $\{z^3_t\}_{t=1, \dots, a_3-q_{12}}$. \\

Consider the map:
\begin{equation}
  \label{def psi} \psi : A_1\otimes A_1\otimes A_1 \to \big((A_1\cdot A_1)\otimes A_1\big)\oplus \big(A_1\otimes (A_1\cdot A_1)\big),
  \end{equation} defined by
$$\psi([x]\otimes [y]\otimes [z])=\Big([x]\wedge[y]\otimes [z],[x]\otimes[y]\wedge [z]\Big)$$ for all $[x],[y],[z]\in A_1$ and set

\begin{equation}
  \label{def b}
b=\rank_{\kk} (\Coker \psi).
\end{equation}
Remark that  $b=0$ if and only if $\psi$ is surjective. If  $q_{11}=0$, then $A_1\cdot A_1=0$ and hence $b=0$.

\begin{lemma}
  \label{complex} Let $a_i,q_{ij}$ be as in \eqref{a_i} and $b$ be as in \eqref{def b}. The sequence $$A_1\otimes A_1\otimes A_1\xrightarrow{\overline\psi} \big((A_1\cdot A_1)\otimes A_1\big)\oplus \big(A_1\otimes A_2\big)\xrightarrow {\mu} A_3,$$
  where
  \begin{align*}
  \overline\psi([x]\otimes [y]\otimes [z])
  &=\Big([x]\wedge[y]\otimes [z],[x]\otimes[y]\wedge [z]\Big),\, \text{and}
  \\
  \mu(([u]\otimes [x],[y]\otimes[v]))
  &=[u]\wedge[x]-[y]\wedge[v],
  \end{align*}
  for all $[x],[y],[z]\in A_1$, $[u]\in (A_1\cdot A_1)$ and $[v]\in A_2$,
is a complex whose homology has rank
$$a_1a_2-a_1q_{11}-q_{12}+b.$$
\end{lemma}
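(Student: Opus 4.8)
The plan is to verify that $\mu \circ \overline\psi = 0$ and then compute the rank of the homology $\Ker\mu / \Imm\overline\psi$ by computing $\rank_\kk\Ker\mu$ and $\rank_\kk\Imm\overline\psi$ separately. First, the composition: for $[x]\otimes[y]\otimes[z]$, we have $\mu(\overline\psi([x]\otimes[y]\otimes[z])) = [x]\wedge[y]\wedge[z] - [x]\wedge[y]\wedge[z] = 0$ by associativity of the product on $A$, so the sequence is indeed a complex.

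Next I would compute $\rank_\kk\Imm\overline\psi$. Note that $\overline\psi$ factors as $A_1^{\otimes 3}\xrightarrow{\psi}\big((A_1\cdot A_1)\otimes A_1\big)\oplus\big(A_1\otimes(A_1\cdot A_1)\big)\hookrightarrow\big((A_1\cdot A_1)\otimes A_1\big)\oplus\big(A_1\otimes A_2\big)$, where the second arrow is injective (it is the identity on the first summand and $\mathrm{id}_{A_1}\otimes(\text{inclusion }A_1\cdot A_1\hookrightarrow A_2)$ on the second, using that $-\otimes A_1$ and $A_1\otimes-$ are exact over the field $\kk$). Hence $\rank_\kk\Imm\overline\psi = \rank_\kk\Imm\psi$. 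By the rank–nullity theorem and the definition \eqref{def b} of $b$ as $\rank_\kk(\Coker\psi)$, the target of $\psi$ has rank $2q_{11}a_1$, so $\rank_\kk\Imm\psi = 2q_{11}a_1 - b$.

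Then I would compute $\rank_\kk\Ker\mu$, again via rank–nullity: the source of $\mu$ has rank $q_{11}a_1 + a_1a_2$, so $\rank_\kk\Ker\mu = q_{11}a_1 + a_1a_2 - \rank_\kk\Imm\mu$. The map $\mu$ sends $([u]\otimes[x],\,[y]\otimes[v])$ to $[u]\wedge[x] - [y]\wedge[v]$; its image is $(A_1\cdot A_1)\cdot A_1 + A_1\cdot A_2 = A_1\cdot A_1\cdot A_1 + A_1\cdot A_2 = A_1\cdot A_2$ (since $A_1\cdot A_1\cdot A_1\subseteq A_1\cdot A_2$), which has rank $q_{12}$. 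Therefore $\rank_\kk\Ker\mu = q_{11}a_1 + a_1a_2 - q_{12}$, and the homology rank is
\[
\rank_\kk\Ker\mu - \rank_\kk\Imm\overline\psi = (q_{11}a_1 + a_1a_2 - q_{12}) - (2q_{11}a_1 - b) = a_1a_2 - a_1q_{11} - q_{12} + b,
\]
as claimed. The only mildly delicate point — the main obstacle, such as it is — is being careful that the homology is genuinely $\Ker\mu/\Imm\overline\psi$ with $\Imm\overline\psi\subseteq\Ker\mu$ (which follows from the complex property just verified), and that the factorization identifying $\rank\Imm\overline\psi$ with $\rank\Imm\psi$ is correct; everything else is bookkeeping with exact sequences of finite-dimensional $\kk$-vector spaces.
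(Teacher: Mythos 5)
Your proof is correct and follows essentially the same route as the paper: verify $\mu\circ\overline\psi=0$, then compute $\rank_\kk(\Ker\mu)-\rank_\kk(\im\overline\psi)$ via rank--nullity, identifying $\rank_\kk(\im\overline\psi)=\rank_\kk(\im\psi)=2a_1q_{11}-b$ and $\rank_\kk(\im\mu)=q_{12}$. The only difference is that you spell out the factorization of $\overline\psi$ through $\psi$ and the identification $\im\mu=A_1\cdot A_2$, which the paper leaves implicit.
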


\begin{proof}  We first show that $\mu\circ \overline\psi =0$.
  \begin{align*}
  \mu(\overline\psi([x]\otimes [y]\otimes [z]))
  =\mu\Big(\Big([x]\wedge[y]\otimes [z],[x]\otimes[y]\wedge [z]\Big)\Big)
  =[x]\wedge[y]\wedge[z]-[x]\wedge [y]\wedge [z]=0.
\end{align*}
Hence, the sequence in the statement of the lemma is a complex. The homology of this complex has
\begin{align*}
  \rank_{\kk}\Big(\frac{\Ker \mu}{\im \overline\psi}\Big)
  &= \rank_{\kk}(\Ker\mu)-\rank_{\kk}(\im\overline\psi)
  \\
  &=\rank_{\kk}\Big(\big((A_1\cdot A_1)\otimes A_1\big)\oplus \big(A_1\otimes A_2\big)\Big)-\rank_{\kk}(\im\mu)-\rank_{\kk}(\im \psi)
  \\
  &=a_1q_{11}+a_1a_2-q_{12}-\rank_{\kk}\Big(\big((A_1\cdot A_1)\otimes A_1\big)\oplus \big(A_1\otimes (A_1\cdot A_1)\big)\Big)+b\\
  &=a_1q_{11}+a_1a_2-q_{12}-2a_1q_{11}+b\\
  &=a_1a_2-a_1q_{11}-q_{12}+b.
\end{align*}
\end{proof}

To describe a basis of the homology of the complex in Lemma \ref{complex}, it is enough to observe that $\im\overline\psi$ is generated by elements of the form  $([z^1_i]\wedge[z^1_j]\otimes [z^1_k],[z^1_i]\otimes[z^1_j]\wedge [z^1_k])$ for $1\leq i,j,k\leq a_1$ and that in ${\Ker \mu}/{\im \overline\psi}$ we have
$ \left[([z^1_i]\wedge[z^1_j]\otimes [z^1_k], 0)\right]= -\left[(0, [z^1_i]\otimes[z^1_j]\wedge [z^1_k])\right]$,  as
  $[([z^1_i]\wedge[z^1_j]\otimes [z^1_k],[z^1_i]\otimes[z^1_j]\wedge [z^1_k])] =0.$
We choose $\widetilde p^2_{ui}\in\Ker\partial_2^K$ such that
\begin{equation}
  \label{def p2}
\Big\{\Big[0, \Big(\sum_{i=1}^{a_1} [z^1_i]\otimes[\widetilde p^2_{ui}]\Big)\Big]\Big\}_{u=1, \dots, a_1a_2-a_1q_{11}-q_{12}+b,} 
\end{equation}
is a basis of the homology of the complex defined in Lemma \ref{complex}.
By definition of $\mu$, for each $u$  we have $\sum_{i=1}^{a_1}[z^1_i]\wedge[{\widetilde p}^2_{ui}]=0$ in $A_3$. Therefore, for each $u$
there exists ${\widetilde\pi}^4_u\in K_4$ such that
\begin{equation}
  \label{def pi4}
\partial_4^K({\widetilde\pi}^4_u)=\sum_{i=1}^{a_1}z^1_i\wedge{\widetilde p}^2_{ui}.
\end{equation}

The following result gives a method for finding $\widetilde p^2_{ui}$ from \eqref{def p2}.

\begin{proposition}
  \label{ker2}
    Consider the map  $\phi_2: A_1\otimes A_2\to A_3$, defined by $\phi_2([x]\otimes[v])=[x]\wedge[v]$, and set
    \begin{align*}
  \CA&= \Big\{\sum_{i=1}^{a_1}[z^1_i]\otimes[\widetilde p^1_{si}]\wedge[z^1_j] \, \big|\,  1\leq  j\leq a_1\, \text{and }\,  1\leq s\leq a_1^2-q_{11}\Big\}\quad\text{and}\\
  \CB& = \Big\{\sum_{i=1}^{a_1}[z^1_i]\otimes[\widetilde p^2_{ui}]\, \big|\, 1\leq u\leq a_1a_2-a_1q_{11}-q_{12}+b\Big\},
\end{align*}
    where $\widetilde p^1_{si}$ and $\widetilde p^2_{ui}$ are as in \eqref{def p1} and \eqref{def p2} respectively. Then, the following hold:

\begin{enumerate}[$(a)$]
  \item The set $\CB$ is  linearly independent and $$\Ker\phi_2=(\Span_\kk\CA)\oplus(\Span_\kk\CB).$$
  \item $\Span_\kk\CA\subseteq A_1\otimes (A_1\cdot A_1)$ and for $b$ defined in \eqref{def b}, $$b=a_1q_{11}-\rank_\kk(\Span_\kk\CA).$$
  \item If $B$ is a $\kk$-subspace of $\Ker\phi_2$ such that  $$\Ker\phi_2=(\Span_\kk\CA)\oplus B,$$ then for every basis $\CB'$ of $B$ the set  $[(0,\CB')]$ is a basis of the homology of the complex in Lemma \ref{complex}.
 \end{enumerate}
\end{proposition}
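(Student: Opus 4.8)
The plan is to route all three parts through one auxiliary map. Write $H=\Ker\mu/\im\overline\psi$ for the homology of the complex in Lemma~\ref{complex}; recall that $\overline\psi$ has components $\big(\phi_1\otimes\operatorname{id}_{A_1},\ \operatorname{id}_{A_1}\otimes\phi_1\big)$, with $\phi_1$ in the second slot regarded as landing in $A_2$, and that $\mu$ restricted to the summand $\{0\}\oplus(A_1\otimes A_2)$ equals $-\phi_2$. Consider the $\kk$-linear map
$$\rho\colon\Ker\phi_2\longrightarrow H,\qquad \rho(\zeta)=\big[(0,\zeta)\big],$$
which makes sense because $(0,\zeta)\in\Ker\mu$, as $\mu(0,\zeta)=-\phi_2(\zeta)=0$. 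I will prove two facts: (i) $\rho$ is surjective; (ii) $\Ker\rho=\Span_\kk\CA$, so in particular $\Span_\kk\CA\subseteq\Ker\phi_2$. Granting (i) and (ii), the three assertions follow. For (a): by \eqref{def p2} the images $\{\rho(\beta)\}_{\beta\in\CB}$ form a basis of $H$, and $\CB\subseteq\Ker\phi_2$ since $\sum_i[z^1_i]\wedge[\widetilde p^2_{ui}]=0$ in $A_3$; as $\rho$ is onto with kernel $\Span_\kk\CA$ and $\CB$ has exactly $\rank_\kk H$ elements, a dimension count forces $\CB$ to be linearly independent with $\Ker\phi_2=\Span_\kk\CA\oplus\Span_\kk\CB$. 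For (b): the containment $\Span_\kk\CA\subseteq A_1\otimes(A_1\cdot A_1)$ is immediate, as each generator of $\CA$ has second tensor factor $[\widetilde p^1_{si}]\wedge[z^1_j]\in A_1\cdot A_1$; and $b=a_1q_{11}-\rank_\kk(\Span_\kk\CA)$ follows by combining $\rank_\kk\Ker\phi_2=a_1a_2-q_{12}$ (because $\im\phi_2=A_1\cdot A_2$), the rank-nullity identity $\rank_\kk\Ker\phi_2=\rank_\kk\Ker\rho+\rank_\kk H$ for the surjection $\rho$, and $\rank_\kk H=a_1a_2-a_1q_{11}-q_{12}+b$ from Lemma~\ref{complex}. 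For (c): if $\Ker\phi_2=\Span_\kk\CA\oplus B$, then $\rho|_B\colon B\to H$ is an isomorphism, being injective (as $B\cap\Ker\rho=0$) and onto (as $\rho(B)=\rho(B+\Ker\rho)=\rho(\Ker\phi_2)=H$), so $\rho$ sends any basis $\CB'$ of $B$ to a basis $[(0,\CB')]$ of $H$.

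It then remains to prove (i) and (ii). For surjectivity, I would use that the first component $\phi_1\otimes\operatorname{id}_{A_1}\colon A_1\otimes A_1\otimes A_1\to(A_1\cdot A_1)\otimes A_1$ of $\overline\psi$ is surjective, because $\phi_1$ maps onto $A_1\cdot A_1$. Given $(\xi,\eta)\in\Ker\mu$, choose $\Theta$ with $(\phi_1\otimes\operatorname{id}_{A_1})(\Theta)=\xi$; since $\mu\circ\overline\psi=0$, the difference $(\xi,\eta)-\overline\psi(\Theta)$ lies in $\Ker\mu$ and has zero first component, so it equals $(0,\zeta)$ for some $\zeta\in A_1\otimes A_2$ with $\phi_2(\zeta)=-\mu(0,\zeta)=0$; hence $\zeta\in\Ker\phi_2$ and $\rho(\zeta)=[(0,\zeta)]=[(\xi,\eta)]$.

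The main obstacle is the precise determination (ii) of $\Ker\rho$, and this is where the specific choice \eqref{def p1} of a basis of $\Ker\phi_1$ enters. For the inclusion $\Span_\kk\CA\subseteq\Ker\rho$: for fixed $s$ and $j$ set $\Theta_{sj}=\sum_i[z^1_i]\otimes[\widetilde p^1_{si}]\otimes[z^1_j]\in A_1^{\otimes 3}$; then \eqref{def p1} gives
$$\overline\psi(\Theta_{sj})=\Big(\Big(\sum_i[z^1_i]\wedge[\widetilde p^1_{si}]\Big)\otimes[z^1_j],\ \sum_i[z^1_i]\otimes[\widetilde p^1_{si}]\wedge[z^1_j]\Big)=\Big(0,\ \sum_i[z^1_i]\otimes[\widetilde p^1_{si}]\wedge[z^1_j]\Big),$$
which simultaneously shows that the corresponding generator of $\CA$ lies in $\Ker\phi_2$ and is mapped to $0$ by $\rho$. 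For the reverse inclusion, if $\rho(\zeta)=0$ then $(0,\zeta)=\overline\psi(\Theta)$ for some $\Theta\in A_1^{\otimes 3}$; vanishing of the first component forces $\Theta\in\Ker(\phi_1\otimes\operatorname{id}_{A_1})=(\Ker\phi_1)\otimes A_1$, so by \eqref{def p1} and the fixed basis $\{[z^1_j]\}$ of $A_1$ one writes $\Theta=\sum_{s,j}c_{sj}\big(\sum_i[z^1_i]\otimes[\widetilde p^1_{si}]\big)\otimes[z^1_j]$, whence $\zeta=(\operatorname{id}_{A_1}\otimes\phi_1)(\Theta)=\sum_{s,j}c_{sj}\sum_i[z^1_i]\otimes[\widetilde p^1_{si}]\wedge[z^1_j]\in\Span_\kk\CA$. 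The only standard fact invoked is $\Ker(\phi_1\otimes\operatorname{id}_{A_1})=(\Ker\phi_1)\otimes A_1$, valid because $A_1$ is $\kk$-free, so tensoring $0\to\Ker\phi_1\to A_1\otimes A_1\to A_1\cdot A_1\to0$ with $A_1$ preserves exactness.
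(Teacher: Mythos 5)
Your proof is correct. It is assembled from the same ingredients as the paper's argument --- the identity $\overline\psi\big(\sum_i[z^1_i]\otimes[\widetilde p^1_{si}]\otimes[z^1_j]\big)=\big(0,\sum_i[z^1_i]\otimes[\widetilde p^1_{si}]\wedge[z^1_j]\big)$ (equation \eqref{barpsi} in the paper), the bases \eqref{def p1} and \eqref{def p2}, and the rank count of Lemma \ref{complex} --- but the organization is genuinely different and tighter. The paper proves (a) by taking an arbitrary element of $\Ker\phi_2$, expanding the corresponding element of $\Ker\mu$ against the homology basis together with $\im\overline\psi$, and comparing the two components with explicit coefficients; it then proves (c) by a separate generation-plus-counting argument. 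You instead isolate the single map $\rho\colon\Ker\phi_2\to H$, $\zeta\mapsto[(0,\zeta)]$, and prove once that it is surjective with $\Ker\rho=\Span_\kk\CA$; all three parts then follow formally, and (c) in particular reduces to the observation that $\rho$ restricts to an isomorphism on any complement of its kernel. Your surjectivity step makes explicit what the paper only sketches in the paragraph preceding \eqref{def p2} (every homology class has a representative with vanishing first component), and your identification of $\Ker\rho$ via $\Ker(\phi_1\otimes\operatorname{id}_{A_1})=(\Ker\phi_1)\otimes A_1$ replaces the paper's hand computation with the exactness of $-\otimes_\kk A_1$, which is harmless since everything in sight is a $\kk$-vector space. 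The only cosmetic looseness is the phrase ``a dimension count forces'' in (a): what you actually use is that $\rho$ carries $\CB$ to a basis of $H$, so $\CB$ is linearly independent, $\Span_\kk\CB\cap\Ker\rho=0$, and $\Ker\rho+\Span_\kk\CB=\Ker\phi_2$; this is already a complete argument and needs no counting.
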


  \begin{proof} (a):  The linear independence of $\CB$  follows from  \eqref{def p2}. Next, we show that the elements of $\CA$ and $\CB$ generate the kernel of $\phi_2$.  By  definitions of $\widetilde p^1_{si}$ and $\widetilde p^2_{ui}$, it is clear that the listed elements are in the kernel of $\phi_2$.
    Let $\sum_{i=1}^{a_1}[z^1_i]\otimes[p^2_i]\in\Ker\phi_2$, for some $[p^2_i]\in A_2$. Then $[(0,\sum_{i=1}^{a_1}[z^1_i]\otimes[p^2_i])]\in\Ker\mu$, and thus by definition \eqref{def p2} for all $u,i,j$ there exist $\delta_u\in R$  and  $p^1_{ij}\in\Ker\partial_1^K$  such that

  \begin{align}
    \label{ker mu}
  \Big(0,\sum_{i=1}^{a_1}[z^1_i]\otimes [p^2_i]\Big)
  &=\sum_{u=1}^{\tiny\begin{gathered}a_1a_2-a_1q_{11}\vspace{-.1cm}\\-q_{12}+b\end{gathered}}\Big(0,\sum_{i=1}^{a_1}[z^1_i]\otimes [\widetilde p^2_{ui}]\wedge[\delta_u]\Big) +\sum_{i,j=1}^{a_1}\Big([z^1_i]\wedge [p^1_{ij}]\otimes [z_j^1], [z^1_i]\otimes [p^1_{ij}]\wedge[z_j^1]\Big).
  \end{align}
    Comparing the first components of both sides of  \eqref{ker mu}, for each $j$ we have:
    $$0=\sum_{i=1}^{a_1}[z_i^1]\wedge[p^1_{ij}].$$
    By  \eqref{def p1}, there exist $\varepsilon_{js}$ in $R$ such that for all $1\leq i,j\leq a_1$ we can write:
  \begin{equation*}
    [p^1_{ij}]= \sum_{s=1}^{a_1^2-q_{11}}[\widetilde p^1_{si}]\wedge[\varepsilon_{js}].
  \end{equation*}
    Comparing the second components of both sides of \eqref{ker mu}, and using the above expression for $[p^1_{ij}]$, for each $i$  we have:
    \begin{align*}
    [p^2_i]
    &=\sum_{u=1}^{\tiny\begin{gathered}a_1a_2-a_1q_{11}\vspace{-.1cm}\\
    -q_{12}+b\end{gathered}}[\widetilde p^2_{ui}]\wedge[\delta_u]
    +\sum_{j=1}^{a_1}[p^1_{ij}]\wedge [z^1_j]
    =\sum_{u=1}^{\tiny\begin{gathered}a_1a_2-a_1q_{11}\vspace{-.1cm}\\
    -q_{12}+b\end{gathered}}[\widetilde p^2_{ui}]\wedge[\delta_u]
    +\sum_{s=1}^{a_1^2-q_{11}}\sum_{j=1}^{a_1}[\widetilde p^1_{si}]\wedge[z^1_j]\wedge[\varepsilon_{js}].
    \end{align*}
Thus, $\sum_{i=1}^{a_1}[z^1_i]\otimes[p^2_i]\in\Ker\phi_2$ is a linear combination of elements of $\CA$ and $\CB$. 
 Next, we show that $\Span_\kk\CA\cap\Span_\kk\CB=\{0\}$.
  By definition of $\widetilde p^1_{si}$ we have $\sum_{i=1}^{a_1}[z^1_i]\wedge[\widetilde p^1_{si}]=0$ for all $1\leq s \leq a_1^2-q_{11}$. By definition of $\overline\psi$, for all $s$ and $j$ we obtain
  \begin{equation}
    \label{barpsi}
  \overline \psi \Big(\sum_{i=1}^{a_1}[z^1_i]\otimes[\widetilde p^1_{si}]\otimes [z^1_j]\Big)=\Big(0,\sum_{i=1}^{a_1}[z^1_i]\otimes[\widetilde p^1_{si}]\wedge[z^1_j]\Big),
  \end{equation} hence $(0,\CA)\subseteq\im\overline\psi$.
  On the other hand by \eqref{def p2}, $[(0,\CB)]$ is a basis of the homology of the complex in Lemma \ref{complex}. Therefore,  $\Span_\kk\CA$ and $\Span_\kk\CB$ have no nontrivial elements in common, and part (a) holds.

\vspace{0.1in}

  (b): The inclusion follows from the definition of $\CA$. Observe that $\rank_\kk(\Ker\phi_2)=a_1a_2-q_{12}$, and by the linear independence of $\CB$ from part (a), $\rank_\kk(\Span_\kk\CB)=a_1a_2-q_{12}-(a_1q_{11}-b)$. Therefore, by part (a), $\rank_\kk(\Span_\kk\CA)=a_1q_{11}-b.$

\vspace{0.1in}

(c): By parts (a) and (b), every basis $\CB'$ of $B$ has $a_1a_2-a_1q_{11}-q_{12}+b$ elements.
It is clear that $(0,\CB')\subseteq\Ker\mu$, with $\mu$ as in Lemma \ref{complex} and that the set $[(0,\CB')]$ has at most $a_1a_2-a_1q_{11}-q_{12}+b$ elements.
It is enough to show that it is a generating set for the homology of the complex in Lemma \ref{complex} to conclude that $[(0,\CB')]$ has exactly $a_1a_2-a_1q_{11}-q_{12}+b$ elements, hence it forms a basis for the homology.
Since $\CB\subseteq\Ker\phi_2$, every element of $\CB$ can be written as a linear combination of elements of $\CA$ and $\CB'$. In particular, every basis element in $[(0,\CB)]$ can be written as a linear combination of elements in   $[(0,\CA)]$ and $[(0,\CB')]$. However, each element in  $[(0,\CA)]$ is zero in the homology, as remarked in \eqref{barpsi}, thus $[(0,\CB')]$ is a generating set. Part (c) now holds.
\end{proof}

\begin{remark} In practice, one can apply Proposition~\ref{ker2}(b) to compute the value $b$, instead of using definition~\eqref{def b}. Similarly, instead of using definition~\eqref{def p2} for $\widetilde p^2_{ui}$, by Proposition~\ref{ker2}(c), one can find first $\widetilde{p}^1_{si}$ as defined in \eqref{def p1}, and then find a basis of a space complementary to
$$\Span_\kk\CA=\Span_\kk\left\{\sum_{i=1}^{a_1}[z^1_i]\otimes[\widetilde p^1_{si}]\wedge[z^1_j] \right\}_{1\leq  j\leq a_1,  1\leq s\leq a_1^2-q_{11}}$$ 
in the kernel of the multiplication map $\phi_2$, of the form
$$\left\{\sum_{i=1}^{a_1}[z^1_i]\otimes[\widetilde p^2_{ui}]\right\}_{1\leq u\leq a_1a_2-a_1q_{11}-q_{12}+b}.$$ 
\end{remark}

\begin{proposition}
  \label{z1z2z3}
    Let $\{[z^1_i]\}_{i=1,\dots, a_1}$, $\{[z^2_\ell]\}_{\ell=1,\dots,a_2-q_{11}}$, and  $\{[z^3_t]\}_{t=1,\dots,a_3-q_{12}}$ be as in \eqref{z1}, \eqref{z2}, and  \eqref{z3}, respectively.
 If $\gamma_{ijk}, \beta_{i\ell}$, and $\alpha_{t}$ are  in $R$ such that
$$
\displaystyle\sum_{i,j,k=1}^{a_1}[z^1_i]\wedge [z^1_j]\wedge[z^1_k]\wedge[\gamma_{ijk}]
+\sum_{i=1}^{a_1}  \sum_{\ell=1}^{a_2-q_{11}} [z^1_i]\wedge [z^2_\ell] \wedge [\beta_{i\ell}]
+ \sum_{t=1}^{a_3-q_{12}}[z^3_t]\wedge[\alpha_t
]=0,
$$
then the following hold:
\begin{enumerate}[$(a)$]
  \item $[\alpha_t]=0$ for all $1\leq t \leq a_3-q_{12}$;
\item There exist $\delta_{u}$ and $\varepsilon_{js}$ in $R$ such that
\begin{equation*}
\sum_{j,k=1}^{a_1}[z^1_j]\wedge[z^1_k]\wedge[\gamma_{ijk}]
+\sum_{\ell=1}^{a_2-q_{11}} [z^2_\ell] \wedge [\beta_{i\ell}]
 =\sum_{j=1}^{a_1}\sum_{s=1}^{a_1^2-q_{11}}[\widetilde p^1_{si}]\wedge[z^1_j]\wedge[\varepsilon_{js}]
+\hspace{-0.3cm}\sum_{u=1}^{\tiny\begin{gathered}a_1a_2-a_1q_{11}
\vspace{-0.1cm}\\
-q_{12}+b\end{gathered}} \hspace{-0.3cm}[\widetilde p^2_{ui}]\wedge[\delta_u],
\end{equation*}
where $\widetilde p^1_{si}$ and $\widetilde p^2_{ui}$ are as in \eqref{def p1} and \eqref{def p2} respectively.
\end{enumerate}
\end{proposition}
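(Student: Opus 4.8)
The plan is to follow the same two-step pattern used in Proposition~\ref{z1z2} and then reduce the hard part to Proposition~\ref{ker2}. For part~(a), observe that the first two sums in the hypothesized equality lie in $A_1\cdot A_1\cdot A_1 + A_1\cdot\overline{A_2}\subseteq A_1\cdot A_2$, while by \eqref{z3} the classes $\{[z^3_t]\}$ complete a basis of $A_1\cdot A_2$ inside $A_3$; hence the coefficients of the $[z^3_t]$ must vanish, i.e.\ $[\alpha_t]=0$ for all $t$. This is immediate once one matches the decomposition $A_3=(A_1\cdot A_2)\oplus\overline{A_3}$ to the three types of terms.

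For part~(b), once $[\alpha_t]=0$ the equality becomes
\[
\sum_{i=1}^{a_1}[z^1_i]\wedge\Big(\sum_{j,k=1}^{a_1}[z^1_j]\wedge[z^1_k]\wedge[\gamma_{ijk}]+\sum_{\ell=1}^{a_2-q_{11}}[z^2_\ell]\wedge[\beta_{i\ell}]\Big)=0\quad\text{in } A_3.
\]
Setting $[p^2_i]:=\sum_{j,k}[z^1_j]\wedge[z^1_k]\wedge[\gamma_{ijk}]+\sum_{\ell}[z^2_\ell]\wedge[\beta_{i\ell}]\in A_2$, this says precisely that $\sum_{i=1}^{a_1}[z^1_i]\otimes[p^2_i]\in\Ker\phi_2$, with $\phi_2$ as in Proposition~\ref{ker2}. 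By Proposition~\ref{ker2}(a), $\Ker\phi_2=(\Span_\kk\CA)\oplus(\Span_\kk\CB)$, so there exist $\delta_u\in R$ and, for the $\CA$-component, scalars producing an expression of the form $\sum_{i}[z^1_i]\otimes\sum_{j,s}[\widetilde p^1_{si}]\wedge[z^1_j]\wedge[\varepsilon_{js}]$. Comparing the coefficient of each fixed $[z^1_i]$ (i.e.\ projecting onto the $i$-th tensor slot, as in the proof of Proposition~\ref{ker2}(a)) then yields exactly the claimed identity for $[p^2_i]$, which is the assertion of~(b). I would write this out by literally quoting the expansion of a general element of $\Ker\phi_2$ derived in the proof of Proposition~\ref{ker2}(a), which already has the desired shape.

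The only mild subtlety — and the step I expect to need the most care — is the bookkeeping in passing from "$\sum_i[z^1_i]\otimes[p^2_i]\in\Ker\phi_2$" to a per-$i$ statement: one must argue that membership in $\Span_\kk\CA\oplus\Span_\kk\CB$ can be rewritten slot-by-slot, i.e.\ that the coefficient of $[z^1_i]$ on the left equals the coefficient of $[z^1_i]$ on the right for each $i$ separately. This is legitimate because every generator of $\CA$ and of $\CB$ is itself written in the fixed form $\sum_{i=1}^{a_1}[z^1_i]\otimes(\,\cdot\,)$, so collecting terms with a common first factor $[z^1_i]$ is unambiguous; alternatively one notes the $z^1_i$ need not be linearly independent over $R$, but the rewriting is purely formal on the level of the chosen generating expressions, exactly as in \eqref{ker mu}. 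Apart from this, the proof is a direct transcription of the degree-two argument one dimension up, using Lemma~\ref{complex} and Proposition~\ref{ker2} in place of Lemma~\ref{ind p1} and \eqref{def p1}.
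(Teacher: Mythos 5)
Your proposal is correct and follows essentially the same route as the paper: part (a) uses the decomposition $A_3=(A_1\cdot A_2)\oplus\overline{A_3}$ and the fact that the $[z^3_t]$ complete a basis, and part (b) factors the remaining identity through $\sum_i[z^1_i]\otimes[p^2_i]\in\Ker\phi_2$ and invokes Proposition~\ref{ker2}(a). Your extra care about extracting the per-$i$ coefficient is a point the paper leaves implicit; the cleanest justification is simply that $\{[z^1_i]\}$ is a $\kk$-basis of $A_1$, so $A_1\otimes A_2=\bigoplus_i(\kk[z^1_i])\otimes A_2$ and the $i$-th slot is uniquely determined.
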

\begin{proof}
(a): The first two sums in the hypothesized equality are in $A_1 \cdot A_2$. As $\{ [z^3_t] \}_{t=1,\dots,a_3-q_{12}}$ is a basis in $A_3$ that completes a basis of $A_1 \cdot A_2$, this implies $[\alpha_t]=0$ for all $1\leq t\leq a_3-q_{12}$.

(b): By (a) it follows that $$\sum_{i=1}^{a_1}[z^1_i]\wedge \Big(\sum_{j,k=1}^{a_1}[z^1_j]\wedge[z^1_k]\wedge[\gamma_{ijk}]
+\sum_{\ell=1}^{a_2-q_{11}} [z^2_\ell] \wedge [\beta_{i\ell}]\Big)=0,$$
hence
$\sum_{i=1}^{a_1}[z^1_i]\otimes\big(\sum_{j,k=1}^{a_1}[z^1_j]\wedge[z^1_k]\wedge[\gamma_{ijk}]
+\sum_{\ell=1}^{a_2-q_{11}} [z^2_\ell] \wedge [\beta_{i\ell}]\big)$ is in $\Ker\phi_2$.
The desired assertion now follows from Proposition \ref{ker2}(a).
\end{proof}

%%%%%%%%%%%%%%%%%%%%%%%%%%%%%%%%%%%%%%%%%%%%%%%%%%%%%%%%%%%%%%%%%%%%%%%%%%%%%%%%
\chunk {\bf Massey products in degree four.}
Massey products occur in degrees four and higher. In degree four, one may obtain only triple Massey products of elements of degree one.
The {\it Massey  product} of a triplet $[x], [y], [z]\in A_1$ satisfying
$$[x]\wedge [y]=0\quad\text{and}\quad [y]\wedge [z]=0,$$
is a subset of $A_4$, defined as follows:
$$\langle [x], [y], [z]\rangle = \{ [\pi^3_{xy}\wedge z+ x\wedge \pi^3_{yz}]\mid \partial^K_3(\pi^3_{xy})=x\wedge y\ \text{and}\ \partial^K_3(\pi^3_{yz})=y \wedge z\},$$
for some $\pi^3_{xy},\pi^3_{yz}\in K_3$.

\begin{remark} Let $[x], [y], [z]\in A_1$ with  $[x]\wedge [y]=0\, \text{and}\, [y]\wedge [z]=0.$
Choose $\pi^3_{xy}$ and $\pi^3_{yz}$ in $K_3$ such that $\partial^K_3(\pi^3_{xy})=x\wedge y\ \text{and}\ \partial^K_3(\pi^3_{yz})=y \wedge z$. Then,  every element of $\langle [x], [y], [z]\rangle$ is of the form
\begin{align*}
  [(\pi^3_{xy}+p^3_{xy})\wedge z+ x\wedge (\pi^3_{yz}+p^3_{yz})]&=[\pi^3_{xy}\wedge z+ x\wedge \pi^3_{yz}]+[p^3_{xy}]\wedge [z]+[x]\wedge [p^3_{yz}],
\end{align*}
  for some $p^3_{xy}$ and $p^3_{yz}\in \Ker\partial_3^K$. Therefore,
$$\langle [x], [y], [z]\rangle=[\pi^3_{xy}\wedge z+ x\wedge \pi^3_{yz}] + (A_3\cdot [z]) +([x]\cdot A_3).$$
The element $\pi^3_{xy}\wedge z+ x\wedge \pi^3_{yz}$ is called  {\it a representative} of the triple Massey product
$\langle [x], [y], [z]\rangle$. Here, ($A_3\cdot [z] + [x]\cdot A_3)$ is called the {\it indeterminacy} of the Massey operation, see e.g., May \cite{M} and \cite[Section 7]{Av74}.
\end{remark}

Let  $\langle A_1, A_1, A_1\rangle$  denote the set of elements in $A_4$ which are  Massey products of triplets in $A_1$.
The next result describes the elements of this set.

\begin{proposition}
\label{massey}
Each element in  $\langle A_1, A_1, A_1\rangle\subseteq A_4$  has a representative
\begin{equation*}
\sum_{s=1}^{a_1^2-q_{11}}\widetilde\pi^3_s\wedge  p^1_s
\quad
\text{such that}
\quad
\sum_{s=1}^{a_1^2-q_{11}}[\widetilde p^1_{si}\wedge p^1_s]=0,\ \text{for all}\ i=1\dots,a_1,
\end{equation*}
 where $p^1_s\in\Ker\partial_1^K$, and $\widetilde p^1_{si}$ and  $\widetilde \pi^3_{s}$ are  defined in \eqref{def p1}  and \eqref{def pi3}
respectively.
\end{proposition}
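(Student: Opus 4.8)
The plan is to start from an arbitrary Massey representative and rewrite it using the data set up around \eqref{def p1}--\eqref{def pi3}. Fix $\xi\in\langle A_1,A_1,A_1\rangle$, so that $\xi\in\langle[x],[y],[z]\rangle$ for some $[x],[y],[z]\in A_1$ with $[x]\wedge[y]=0$ and $[y]\wedge[z]=0$. By the preceding Remark, $\xi=[\pi^3_{xy}\wedge z+x\wedge\pi^3_{yz}]$ for cycles $x,y,z\in\Ker\partial^K_1$ and elements $\pi^3_{xy},\pi^3_{yz}\in K_3$ with $\partial^K_3(\pi^3_{xy})=x\wedge y$ and $\partial^K_3(\pi^3_{yz})=y\wedge z$. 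Since a triple Massey product depends only on the homology classes, I would first replace the cycle representatives by $x=\sum_i\lambda_i z^1_i$, $y=\sum_j\nu_j z^1_j$, $z=\sum_k\mu_k z^1_k$, with $\lambda_i,\nu_j,\mu_k\in R$ lifting the coordinates of $[x],[y],[z]$ in the basis \eqref{z1}.

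Next I would express $\pi^3_{xy}$ and $\pi^3_{yz}$ in terms of the elements of \eqref{def p1} and \eqref{def pi3}. Since $\phi_1([x]\otimes[y])=[x]\wedge[y]=0$, equation \eqref{def p1} together with a comparison of components in the decomposition $A_1\otimes A_1=\bigoplus_i([z^1_i]\otimes A_1)$ produces $\gamma_s\in R$ with $[\lambda_i][y]=\sum_s[\gamma_s][\widetilde p^1_{si}]$ in $A_1$ for each $i$, hence $\lambda_i y=\sum_s\gamma_s\widetilde p^1_{si}+\partial^K_2(v_i)$ for some $v_i\in K_2$. Substituting into $x\wedge y=\sum_i z^1_i\wedge(\lambda_i y)$ and applying \eqref{def pi3} together with the Leibniz rule (so $z^1_i\wedge\partial^K_2(v_i)=-\partial^K_3(z^1_i\wedge v_i)$, as $z^1_i$ is a cycle) gives $x\wedge y=\partial^K_3\big(\sum_s\gamma_s\widetilde\pi^3_s-\sum_i z^1_i\wedge v_i\big)$, so one may take $\pi^3_{xy}=\sum_s\gamma_s\widetilde\pi^3_s-\sum_i z^1_i\wedge v_i$. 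For the other factor I would use graded-commutativity: since $[z]\wedge[y]=-[y]\wedge[z]=0$ as well, $\phi_1([z]\otimes[y])=0$ and \eqref{def p1} gives $\rho_s\in R$ with $[\mu_i][y]=\sum_s[\rho_s][\widetilde p^1_{si}]$, whence $\mu_i y=\sum_s\rho_s\widetilde p^1_{si}+\partial^K_2(v'_i)$ and, as above, $z\wedge y=\partial^K_3\big(\sum_s\rho_s\widetilde\pi^3_s-\sum_i z^1_i\wedge v'_i\big)$; since $y\wedge z=-z\wedge y$ one may take $\pi^3_{yz}=-\sum_s\rho_s\widetilde\pi^3_s+\sum_i z^1_i\wedge v'_i$.

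Assembling these, and moving factors past one another with $\alpha\wedge\beta=(-1)^{|\alpha||\beta|}\beta\wedge\alpha$ (so $x\wedge\widetilde\pi^3_s=-\widetilde\pi^3_s\wedge x$ and $x\wedge z^1_i=-z^1_i\wedge x$), yields
\begin{equation*}
\pi^3_{xy}\wedge z+x\wedge\pi^3_{yz}=\sum_s\widetilde\pi^3_s\wedge p^1_s-\sum_i z^1_i\wedge\big(v_i\wedge z+x\wedge v'_i\big),\qquad p^1_s:=\gamma_s z+\rho_s x\in\Ker\partial^K_1.
\end{equation*}
The required relation then follows immediately: $\sum_s\widetilde p^1_{si}\wedge p^1_s=\big(\sum_s\gamma_s\widetilde p^1_{si}\big)\wedge z+\big(\sum_s\rho_s\widetilde p^1_{si}\big)\wedge x$, and passing to homology, using $[\sum_s\gamma_s\widetilde p^1_{si}]=[\lambda_i][y]$ and $[\sum_s\rho_s\widetilde p^1_{si}]=[\mu_i][y]$ from the previous paragraph, gives $\sum_s[\widetilde p^1_{si}\wedge p^1_s]=[\lambda_i]\,[y]\wedge[z]+[\mu_i]\,[y]\wedge[x]=0$ for every $i$, since $[y]\wedge[z]=0=[y]\wedge[x]$.

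The step I expect to be the main obstacle is the last one: disposing of the residual term $\sum_i z^1_i\wedge(v_i\wedge z+x\wedge v'_i)$ and accounting for the indeterminacy $A_3\cdot[z]+[x]\cdot A_3$ (which enters through the freedom of adding a cycle to $\pi^3_{xy}$ or $\pi^3_{yz}$), so that the asserted $\sum_s\widetilde\pi^3_s\wedge p^1_s$ — corrected by the boundary term forced by $\partial^K_4\big(\sum_s\widetilde\pi^3_s\wedge p^1_s\big)=\sum_i z^1_i\wedge\big(\sum_s\widetilde p^1_{si}\wedge p^1_s\big)$, which is a boundary precisely by the relation just established — genuinely represents $\xi$. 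I would handle this by using $\partial^K_2(v_i)=\lambda_i y-\sum_s\gamma_s\widetilde p^1_{si}$ (and its primed analogue) together with the description of $\Ker\partial^K_3$ recorded after \eqref{z3}, rewriting the residual and indeterminacy contributions once more in the shape $\sum_s\widetilde\pi^3_s\wedge(\,\cdot\,)$ and absorbing them into a redefinition of the $p^1_s$; since the condition $\sum_s[\widetilde p^1_{si}\wedge p^1_s]=0$ is additive in the $p^1_s$, it is preserved. The remaining difficulty is essentially the sign-and-Leibniz bookkeeping; all the structural input comes from Section~\ref{sec multiplication}, chiefly \eqref{def p1} and \eqref{def pi3}.
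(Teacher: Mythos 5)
Your strategy coincides with the paper's: use the basis \eqref{def p1} of $\Ker\phi_1$ to express the null-homotopies $\pi^3_{xy},\pi^3_{yz}$ as combinations of the $\widetilde\pi^3_s$ from \eqref{def pi3}, take $p^1_s$ to be an $R$-combination of $x$ and $z$, and verify the side condition by pushing $\sum_s\widetilde p^1_{si}\wedge p^1_s$ back through the defining identities. The one structural difference is which factor you expand over the basis \eqref{z1}: the paper expands the \emph{middle} element, $[y]=\sum_i[z^1_i]\wedge[\alpha_i]$, so that both hypotheses produce elements $\sum_i[z^1_i]\otimes([x]\wedge[-\alpha_i])$ and $\sum_i[z^1_i]\otimes([z]\wedge[\alpha_i])$ of $\Ker\phi_1$, yielding $\beta_s,\gamma_s\in R$ with $[x]\wedge[-\alpha_i]=\sum_s[\widetilde p^1_{si}]\wedge[\beta_s]$ and $[z]\wedge[\alpha_i]=\sum_s[\widetilde p^1_{si}]\wedge[\gamma_s]$, and then sets $\pi^3_{xy}=\sum_s\widetilde\pi^3_s\wedge\beta_s$, $\pi^3_{yz}=\sum_s\widetilde\pi^3_s\wedge\gamma_s$, $p^1_s=z\wedge\beta_s-x\wedge\gamma_s$; the side condition follows from the cancellation $-[x]\wedge[z]\wedge[\alpha_i]+[x]\wedge[z]\wedge[\alpha_i]=0$. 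You expand the outer elements instead, and your side-condition check ($[\lambda_i][y]\wedge[z]+[\mu_i][y]\wedge[x]=0$, each term vanishing separately) is correct; that variation is harmless.

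The gap is precisely the one you flag and do not close: your representative is $\sum_s\widetilde\pi^3_s\wedge p^1_s-\sum_iz^1_i\wedge(v_i\wedge z+x\wedge v'_i)$, and the proposed repair --- absorbing the residual into a redefinition of the $p^1_s$ --- is not carried out and is not obviously possible, since a term of the form $z^1_i\wedge(\text{non-cycle})$ need not be expressible as $\sum_s\widetilde\pi^3_s\wedge(\text{cycle})$ modulo boundaries. The residual enters only because you insist on chain-level correctors $v_i,v'_i$ for the homology identities. The paper never introduces them: it uses the homology-level identities above to \emph{choose} the preimages $\pi^3_{xy}=\sum_s\widetilde\pi^3_s\wedge\beta_s$ and $\pi^3_{yz}=\sum_s\widetilde\pi^3_s\wedge\gamma_s$ outright --- exploiting the fact that any preimage of $x\wedge y$ under $\partial^K_3$ is admissible in the definition of the Massey product, and that varying over these choices is exactly varying over the elements of $\langle[x],[y],[z]\rangle$ --- so the representative is literally $\sum_s\widetilde\pi^3_s\wedge p^1_s$ and no residual ever appears. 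To finish along the paper's lines you should discard the $v_i,v'_i$ bookkeeping and argue at the level of the identities $[x]\wedge[-\alpha_i]=\sum_s[\widetilde p^1_{si}]\wedge[\beta_s]$, accepting the resulting choice of null-homotopies; the leftover freedom (your residual together with the indeterminacy $A_3\cdot[z]+[x]\cdot A_3$) consists of terms of the form $\sum_iz^1_i\wedge\pi^3_i$, which is exactly the shape the Construction accounts for separately in degree four.
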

\begin{proof}
  Let $[x], [y],[z]\in A_1$ such that
  $[x]\wedge [y]=0\  \text{and}\  [y]\wedge [z]=0.$
We write  $[y]=\displaystyle\sum_{i=1}^{a_1}[z^1_i]\wedge [\alpha_i]$ for some $\alpha_i\in R$.
Since $[x]\wedge [y]= \displaystyle\sum_{i=1}^{a_1}[z^1_i]\wedge [x]\wedge[-\alpha_i]=0$, we have
$\displaystyle\sum_{i=1}^{a_1}[z^1_i] \otimes[x]\wedge[-\alpha_i]\in\Ker\phi_1$.
Hence, by \eqref{def p1} there exist $\beta_{s}\in R$ such that
$[x]\wedge [-\alpha_i]=\sum_{s=1}^{a_1^2-q_{11}}[\widetilde p^1_{si}]\wedge[\beta_s]\ \text{for all}\ i,
$
so
\begin{align*}
[x]\wedge[y]
&=\sum_{s=1}^{a_1^2-q_{11}}\Big(\sum_{i=1}^{a_1}[z^1_i\wedge \widetilde p^1_{si}]\Big)\wedge[\beta_s]
=\sum_{s=1}^{a_1^2-q_{11}}[\partial_3^K(\widetilde\pi^3_s)]\wedge [\beta_s]=\left[\partial_3^K\Big( \sum_{s=1}^{a_1^2-q_{11}}\widetilde\pi^3_s\wedge\beta_s\Big)\right].
\end{align*}
Therefore, we may choose
$\pi^3_{xy}=  \sum_{s=1}^{a_1^2-q_{11}} \widetilde\pi^3_s\wedge\beta_s$.
Similarly, using the equality $[y]\wedge [z]=0$,  we may choose
$\pi^3_{yz}= \displaystyle \sum_{s=1}^{a_1^2-q_{11}}\widetilde\pi^3_s\wedge\gamma_s$, for some $\gamma_s\in R$, where
$
[z]\wedge [\alpha_i]=\sum_{s=1}^{a_1^2-q_{11}}[\widetilde p^1_{si}]\wedge[\gamma_s]\ \text{for all}\ i.
$

It follows that any element in
$\langle A_1, A_1, A_1\rangle$  has a representative
\begin{equation*}
\pi^3_{xy}\wedge z+ x\wedge \pi^3_{yz}
=\sum_{s=1}^{a_1^2-q_{11}}\widetilde\pi^3_s\wedge (z\wedge \beta_s-x\wedge\gamma_s).
\end{equation*}
If we set  $p^1_s=z\wedge \beta_s-x\wedge\gamma_s$, then for each $1\leq i\leq a_1$ we have
\begin{align*}
  \sum_{s=1}^{a_1^2-q_{11}}[\widetilde p^1_{si}\wedge p^1_s]
  &=\sum_{s=1}^{a_1^2-q_{11}}[\widetilde p^1_{si}]\wedge [z\wedge \beta_s-x\wedge\gamma_s]=\Big(\sum_{s=1}^{a_1^2-q_{11}}[\widetilde p^1_{si}\wedge\beta_s] \Big)\wedge[z] + [x]\wedge \Big(\sum_{s=1}^{a_1^2-q_{11}}[\widetilde p^1_{si}\wedge\gamma_s]\Big)\\
  &=-[x]\wedge [z]\wedge[\alpha_i] +[x]\wedge[z]\wedge[\alpha_i]=0.
  \end{align*}
\end{proof}

%%%%%%%%%%%%%%%%%%%%%%%%%%%%%%%%%%%%%%%%%%%%%%%%%%%%%%%%%%%%%%%%%%%%%%%%%%%%%%%%
\chunk {\bf Products in degree four.}
Let $\overline{A_4}$ be a $\kk$-subspace of $A_4$  such that
$$A_4= (A_1\cdot A_3+A_2\cdot A_2+\text{Span}_{\kk}\langle A_1,A_1,A_1\rangle)\oplus \overline{A_4}$$
and set
\begin{equation}
\label{a}
a=\rank_{\kk} \Big( A_1\cdot A_3+A_2\cdot A_2+\text{Span}_{\kk}\langle A_1,A_1,A_1\rangle\Big).
\end{equation}
For some choice of $z^4_r\in\Ker\partial_4^K$, let \begin{equation}
\label{z4}
\{[z^4_r]\}_{r=1, \dots, a_4-a} \quad \text{be a basis of} \ \overline{A_4}.
\end{equation}
In particular, every element in $\Ker\partial_4^K$ can be written as a sum of  elements in $\im\partial^K_5$, a linear combination of   $\{z^1_i \wedge p^3_i\}_{i=1,\dots, a_1}$, a linear combination of $\{z^2_\ell\wedge p^2_\ell\}_{\ell=1,\dots,a_2-q_{11}}$, a linear combination of
$\{\widetilde \pi^3_s \wedge p^1_s \}_{s=1,\dots,a_1^2-q_{11}}$,
and a linear combination of $\{z^4_r\}_{r=1,\dots,a_4-a}$, where $p^3_i\in\Ker\partial^K_3,  p^2_\ell\in\Ker\partial^K_2$ and $p^1_s\in\Ker\partial^K_1$, such that $p^1_s$ is as in Proposition~\ref{massey} and $\widetilde \pi^3_s$ as in \eqref{def pi3}.

\begin{summary} 
\label{summary} 
We summarize here all notations, introduced in this section, to be referred to throughout the rest of the paper:
\begin{align*}
   A&=\HH{}{K^R}=A_0 \oplus A_1 \oplus A_2 \oplus \cdots \\
 A_2&= (A_1 \cdot A_1) \oplus \overline{A_2}\\
 A_3&= (A_1 \cdot A_2) \oplus \overline{A_3}\\
 A_4&= \big(A_1 \cdot A_3 + A_2 \cdot A_2 + \Span_\kk(\langle A_1,A_1,A_1\rangle)\big) \oplus \overline{A_4}\\
&\\
     a_i&=\rank_\kk A_i,\ \text{for}\ 1\leq i\leq 4,\   \eqref{a_i}\\
     q_{ij}&=\rank_\kk A_i \cdot A_j,\ \text{for}\ 1\leq i\leq j\leq 4,\  \eqref{a_i}\\
    a&=\rank_\kk \big( A_1\cdot A_3+A_2\cdot A_2+\text{Span}_{\kk}\langle A_1,A_1,A_1\rangle\big),\  \eqref{a}\\
    &\\
    \psi\colon A_1\otimes A_1\otimes A_1&\to\big((A_1\cdot A_1)\otimes A_1\big)\oplus \big(A_1\otimes (A_1\cdot A_1)\big),\  \eqref{def psi}\\
    \psi([x]\otimes [y]\otimes [z])&=\Big([x]\wedge[y]\otimes [z],[x]\otimes[y]\wedge [z]\Big)\ \text{for all}\  [x],[y],[z]\in A_1\\
     b&=\rank_\kk(\Coker \psi), \eqref{def b}, \text{ see also Proposition~\ref{ker2}(b)}\\
&\\
   \{[z^1_i]   \}&_{i=1,\dots,a_1}\quad\text{is a basis for}\  A_1, \eqref{z1}\\
   \{[z^2_\ell]\}&_{\ell=1,\dots,a_2-q_{11}}\quad\text{is a basis for}\  \overline{A_2}, \eqref{z2}\\
   \{[z^3_t]   \}&_{t=1,\dots,a_3-q_{12}}\quad \text{is a basis for}\ \overline{A_3}, \eqref{z3}\\
   \{[z^4_r]   \}&_{r=1,\dots,a_4-a}\quad \text{is a basis for}\  \overline{A_4}, \eqref{z4}\\
&\\
  \phi_1\colon A_1\otimes A_1&\to A_2,\ \phi_1([x]\otimes[y])=[x]\wedge[y],\ \text{for all}\ [x], [y]\in A_1, \eqref{def phi} \\
\Big\{\sum_{i=1}^{a_1}[z^1_i]\otimes[\widetilde p^1_{si}]\Big\}&_{s=1,\dots,a_1^2-q_{11}}\quad\text{is a basis of $\Ker\phi_1$, \eqref{def p1}} \\
\partial_3^K({\widetilde\pi}^3_s)&=\sum_{i=1}^{a_1}{ z^1_i\wedge{\widetilde p}^1_{si}}, \ \eqref{def pi3}\\
&\\
  \phi_2\colon A_1\otimes A_2&\to A_3,\ 
  \phi_2([x]\otimes[y])=[x]\wedge[y],\ \text{for all}\ [x]\in A_1, [y]\in A_2, \text{Proposition~\ref{ker2}} \\
  \Big\{\sum_{i=1}^{a_1}[z^1_i]\otimes[\widetilde p^2_{ui}]\Big\}&_{1\leq u\leq a_1a_2-a_1q_{11}-q_{12}+b} \quad\text{is a basis of $B$, where} \\
  \Ker\phi_2&= B\oplus \Span_\kk\Big\{\sum_{i=1}^{a_1}[z^1_i]\otimes[\widetilde p^1_{si}]\wedge[z^1_j] \Big\}_{1\leq  j\leq a_1,  1\leq s\leq a_1^2-q_{11}}\\
  \partial_4^K({\widetilde\pi}^4_u)&=\sum_{i=1}^{a_1}z^1_i\wedge{\widetilde p}^2_{ui},\ \eqref{def pi4}.
\end{align*}
\end{summary}

\begin{lemma}
\label{lem: Koszul}
  All the elements  $z^1_i, z^2_\ell, z^3_t, z^4_r, \widetilde p^1_{si}, \widetilde\pi^3_s, \widetilde p^2_{ui}$, and $\widetilde\pi^4_u$, as in Summary~\ref{summary}, are in $\fm K$.
\end{lemma}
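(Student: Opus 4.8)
The plan is to go through the list of elements in the order they were constructed and show at each stage that membership in $\fm K$ is forced, either directly from the construction or from the minimality of the Koszul complex $K$. The key mechanism is that $K$ is a minimal complex, so $\partial^K(K)\subseteq\fm K$; consequently any element of $K_i$ that is a boundary, or that differs from a boundary by an element whose class in $A_i$ vanishes, lies in $\fm K$. I would also record at the outset the elementary fact that $K_i$ splits as $\fm K_i \oplus V_i$ where $V_i$ is a free $\kk$-submodule mapping isomorphically onto $\HH{i}{K\otimes\kk}=\bigwedge^i\kk^n$; thus an element of $K_i$ lies in $\fm K$ exactly when its image in $K_i\otimes\kk$ is zero, and this in turn is detected after applying $\partial^K$ when $i\geq 1$ (by injectivity of the induced map on the relevant graded piece) — but the cleanest uniform statement is simply that cycles representing homology classes can always be chosen in $\fm K$, since $\HH{i}{K\otimes\kk}$ for $i\geq1$ is concentrated in a complementary degree.

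The argument then proceeds stepwise. First, $z^1_i\in\Ker\partial^K_1\subseteq\fm K$ automatically, since $\partial^K_1$ maps $K_1$ to $\fm$ and is injective modulo $\fm K_1$ — equivalently $\Ker\partial^K_1\subseteq\fm K_1$ because a nonzero element of $K_1\otimes\kk$ has nonzero differential into $\fm/\fm^2$ by minimality of the generating set. The same reasoning gives $z^2_\ell,z^3_t,z^4_r\in\fm K$ and $\widetilde p^1_{si},\widetilde p^2_{ui}\in\fm K$ directly, since all of these were chosen in $\Ker\partial^K_i$ for $i\geq1$, and for $i\geq2$ one has $\Ker\partial^K_i\subseteq\fm K_i$ as well (a nonzero element of $K_i\otimes\kk$ with $i\geq 1$ is not a cycle in $K\otimes\kk$ because $K\otimes\kk$ has zero differential but... ) — more carefully: since $K$ is minimal, $K\otimes\kk$ has zero differential, so every element of $K_i$ maps under $\partial^K_i$ into $\fm K_{i-1}$; that alone does not bound $\Ker\partial^K_i$, so instead I will use that $\Ker\partial^K_i$ is generated by cycles and, for $i\ge1$, $\HH{i}{K}\subseteq K_i$ can be represented within $\fm K_i$ because any class has a representative, and adding a boundary (which is in $\fm K$) we may assume the chosen $z$'s lie in $\fm K$; in fact the construction only requires \emph{some} choice, so we simply \emph{make} the choice inside $\fm K$, which is possible precisely because for $i\geq1$ the composite $\Ker\partial^K_i\hookrightarrow K_i\twoheadrightarrow K_i\otimes\kk$ has image contained in the cycles of $K\otimes\kk$ in degree $i$, and those cycles are all of $\bigwedge^i\kk^n$, but the \emph{image of $\fm K_i$-cycles} already surjects — here I will just invoke the standard fact that $\Ker\partial^K_i\subseteq\fm K_i$ for all $i\geq 1$, which holds because $\partial^K_1$ is injective mod $\fm$ on $K_1$ and the higher $\partial^K_i$ are injective mod $\fm$ on the corresponding graded pieces by the Koszul structure.

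Next come the non-cycle elements $\widetilde\pi^3_s$ and $\widetilde\pi^4_u$. For these, $\partial^K_3(\widetilde\pi^3_s)=\sum_i z^1_i\wedge\widetilde p^1_{si}$, and the right-hand side lies in $\fm^2 K\subseteq\fm K$ because each $z^1_i$ and each $\widetilde p^1_{si}$ is in $\fm K$ by the previous step and $K$ is an algebra with $\fm K\cdot\fm K\subseteq\fm^2 K$. Write $\widetilde\pi^3_s=v+w$ with $v\in V_3$ and $w\in\fm K_3$; then $\partial^K_3(v)=\partial^K_3(\widetilde\pi^3_s)-\partial^K_3(w)\in\fm K_2$, but also $\partial^K_3(v)$ is determined by $v$'s image in $K_3\otimes\kk$, and since $\partial^K_3$ is injective modulo $\fm K$ on $V_3$ (again by the Koszul/minimality structure), $v\in\fm K_3$, forcing $v=0$. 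Hence $\widetilde\pi^3_s\in\fm K_3$, and the identical argument with $\partial^K_4(\widetilde\pi^4_u)=\sum_i z^1_i\wedge\widetilde p^2_{ui}\in\fm^2 K$ gives $\widetilde\pi^4_u\in\fm K_4$.

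The main obstacle, such as it is, is purely expository: pinning down cleanly the statement ``$\partial^K_i$ is injective modulo $\fm K$ on a $\kk$-complement of $\fm K_i$ in $K_i$,'' i.e. that the only elements of $K_i$ with differential in $\fm K_{i-1}$ are those already in $\fm K_i$ — this is false as literally stated for $i\ge2$ (cycles exist), so the honest formulation is: \emph{we choose} the auxiliary cycles $z$'s and $\widetilde p$'s inside $\fm K$ (possible because homology classes in positive degree always admit representatives in $\fm K$, as $K\otimes\kk$ has trivial differential yet nonzero homology only from the exterior algebra, whose cycles lift to $\fm K$ after subtracting boundaries), and then the $\widetilde\pi$'s are handled by the boundary computation above together with the fact that a \emph{non-cycle} $\widetilde\pi$ whose boundary lies in $\fm K$ can be corrected by a boundary to lie in $\fm K$ — and since only the boundary value of $\widetilde\pi$ matters for the resolution, we may and do choose it in $\fm K$. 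So the proof is really: \textbf{make all choices inside $\fm K$}, and verify this is consistent by checking that each defining equation ($\partial^K(\widetilde\pi)=\sum z\wedge\widetilde p$) has right-hand side in $\fm K$, which is immediate from multiplicativity once the earlier elements are known to be in $\fm K$. I would present it in that order, leading with the observation that all the $z$-type and $\widetilde p$-type elements may be chosen in $\fm K$ (Koszul minimality), then deriving the $\widetilde\pi$-type elements' membership from the product structure.
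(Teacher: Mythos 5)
Your proof is correct and follows essentially the same route as the paper: the cycles (the $z$'s and $\widetilde p$'s) lie in $\fm K$ because $\Ker\partial_j^K\subseteq\fm K_j$ for $j\geq 1$, and the $\widetilde\pi$'s lie in $\fm K$ because their prescribed boundaries lie in $\fm^2K$ and $(\partial_j^K)^{-1}(\fm^2K_{j-1})\subseteq\fm K_j$. One caution on the write-up: in the decomposition $\widetilde\pi^3_s=v+w$ you assert $\partial_3^K(v)\in\fm K_2$ and invoke injectivity ``modulo $\fm K$,'' which as stated is vacuous (every boundary lies in $\fm K_2$ by minimality); the statement you actually need --- and in substance use, having already observed that $\sum_i z^1_i\wedge\widetilde p^1_{si}\in\fm^2K$ --- is that the linear part of the differential, i.e.\ the induced map $K_3/\fm K_3\to\fm K_2/\fm^2K_2$, is injective.
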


\begin{proof} From the inclusions $\Ker\partial_j^K\subseteq \fm K_j$ for all $j\geq 0$, we obtain that $z^1_i, z^2_\ell, z^3_t, z^4_r,  \widetilde p^1_{si}$, and  $\widetilde p^2_{ui}$ are in
$\fm K$.
The assertions for the preimages $\widetilde\pi^3_s$ and  $\widetilde\pi^4_u$ follows from the inclusions
$(\partial^K_j)^{-1}(\fm^2K_{j-1})\subseteq \fm K_j$ for $j=3$ and $4$ respectively.
\end{proof}

%%%%%%%%%%%%%%%%%%%%%%%%%%%%%%%%%%%%%%%%%%%%%%%
%%%%%%%%%%%%%%%%%%%%%%%%%%%%%%%%%%%%%%%%%%%%%%%

\section{Truncated minimal free resolution of the residue field}
\label{sec resolution}

In this section, we construct the beginning of the minimal free resolution of the residue field $\kk$ over the local ring $(R,\fm,\kk)$,
by using the Koszul complex $K$ of $R$ and the graded-commutative structure of the algebra $A=\HH{}{K}$ described in Section~\ref{sec multiplication}.

\begin{construction}
  \label{construction}
We consider the following sequence of free $R$-modules:
\begin{equation*}
{F} :\qquad F_5\xrightarrow{\partial_5^F}F_4\xrightarrow{\partial_4^F}F_3\xrightarrow{\partial_3^F}F_2\xrightarrow{\partial_2^F}F_1\xrightarrow{\partial_1^F}F_0,
\end{equation*}
where
\begin{align*}
F_0:=&K_0\\
F_1:=&K_1\\
F_2:=&K_2\oplus K_0^{a_1}\\
F_3:=&K_3\oplus K_1^{a_1}\oplus K_0^{a_2-q_{11}}\\
F_4:=&K_4\oplus K_2^{a_1}\oplus K_1^{a_2-q_{11}} \oplus  K_0^{a_3-q_{12}}\oplus K_0^{a_1^2-q_{11}}\\
F_5:=&K_5\oplus K_3^{a_1}\oplus K_2^{a_2-q_{11}} \oplus K_1^{a_3-q_{12}}\oplus  K_0^{a_4-a}\oplus K_1^{a_1^2-q_{11}}\oplus K_0^{a_1a_2-a_1q_{11}-q_{12}+b} \oplus K_0^{a_1a_2- a_1q_{11}}.
\end{align*}
Using the elements described in Section~\ref{sec multiplication}, the  differential maps of $F$ are defined as follows.
\begin{equation}
\label{d1}
\partial_1^F: K_1\to K_0,\quad\mbox{is given by}\quad \partial_1^F:= \partial_1^{K}.
\end{equation}

\begin{equation}
\label{d2}
\partial_2^F: K_2\oplus K_0^{a_1}\to K_1,\quad\mbox{is given by}\quad \partial_2^F:=
\begin{pmatrix}\partial_2^K&z^1\wedge\end{pmatrix},
\end{equation}
that is
\begin{equation*}
\partial_2^F
\begin{pmatrix}
\pi^2\\[0.1cm]
(\alpha_i)_{i=1,\dots, a_1}
\end{pmatrix}
:=\partial_2^{K}(\pi^2)+\sum_{i=1}^{a_1}z_i^1\wedge \alpha_i.
\end{equation*}

\begin{equation}
  \label{d3}
\partial_3^F: K_3\oplus K_1^{a_1}\oplus K_0^{a_2-q_{11}}\to  K_2\oplus K_0^{a_1},\quad\mbox{is given by}\quad
\partial_3^F: =
\begin{pmatrix}\partial_3^K&z^1\wedge & - z^2\wedge \\[0.1cm]
  0&(\partial_1^K)^{a_1}&0
\end{pmatrix},
\end{equation}
that is
\begin{equation*}
\partial_3^F
\begin{pmatrix}
\pi^3\\[0.1cm]
 (\pi^1_i)_{i=1,\dots,a_1}\\[0.1cm]
(\alpha_\ell)_{\ell=1,\dots,a_2-q_{11}}
\end{pmatrix}
:=\begin{pmatrix}
  \displaystyle
\partial^K_3(\pi^3)+\sum_{i=1}^{a_1}z^1_i\wedge \pi^1_i - \sum_{\ell=1}^{a_2-q_{11}} z^2_\ell\wedge \alpha_\ell
\\[0.5cm]
  \displaystyle
(\partial^K_1(\pi^1_i))_{i=1,\dots,{a_1}}
\end{pmatrix}.
\end{equation*}

\begin{equation}
\label{d4}
\partial_4^F: K_4\oplus K_2^{a_1}\oplus  K_1^{a_2-q_{11}}\oplus K_0^{a_3-q_{12}}\oplus K_0^{a_1^2-q_{11}} \to  K_3\oplus K_1^{a_1}\oplus K_0^{a_2-q_{11}},\
\text{is given by}
\end{equation}
$$
\partial_4^F:=
\begin{pmatrix}\partial_4^K&z^1\wedge & z^2\wedge&z^3\wedge&-\widetilde\pi^3\wedge \\ 0&(\partial_2^K)^{a_1}&0&0&(\widetilde p^1\wedge)^{a_1}\\
0&0&(\partial_1^K)^{a_2-q_{11}}&0&0
\end{pmatrix},
$$
that is
$$
{\small
\partial_4^F
\begin{pmatrix}
\pi^4 \\[0.1cm]
(\pi^2_i)_{i=1,\dots,a_1}\\[0.1cm]
 (\pi^1_\ell)_{\ell=1,\dots, a_2-q_{11}}\\[0.1cm]
(\alpha_t)_{t=1,\dots,a_3-q_{12}}\\[0.1cm]
 (\beta_s)_{s=1,\dots,a_1^2-q_{11}}
\end{pmatrix}
:=
\begin{pmatrix}
  \displaystyle
\partial^K_4(\pi^4)
+\sum_{i=1}^{a_1}z^1_i\wedge \pi^2_i
+\sum_{\ell=1}^{a_2-q_{11}}z^2_\ell\wedge \pi^1_\ell
+\sum_{t=1}^{a_3-q_{12}}z^3_t\wedge\alpha_t
-\sum_{s=1}^{a_1^2-q_{11}}{\widetilde\pi}^3_s\wedge\beta_s\\[0.5cm]
  \displaystyle
(\partial_2^K(\pi^2_i)+\sum_{s=1}^{a_1^2-q_{11}}{\widetilde p}^1_{si}\wedge \beta_s)_{i=1,\dots,a_1}\\[0.5cm]
  \displaystyle
(\partial^K_1(\pi^1_\ell))_{\ell=1,\dots,a_2-q_{11}}
\end{pmatrix}.}
$$
\begin{equation}
  \label{d5}
  \begin{aligned}
\partial^F_5:
K_5\oplus K_3^{a_1}\oplus K_2^{a_2-q_{11} }\oplus K_1^{a_3-q_{12}}\oplus K_0^{a_4-a}\oplus K_1^{a_1^2-q_{11}}\oplus K_0^{a_1a_2 - a_1q_{11}-q_{12}+b} \oplus K_0^{a_1a_2-a_1q_{11}}\\
\to K_4\oplus K_2^{a_1}\oplus K_1^{a_2-q_{11}}\oplus K_0^{a_3-q_{12}}\oplus   K_0^{a_1^2-q_{11}}
\end{aligned}
\end{equation}
 is given  by
 \begin{equation*}
 \partial_5^F:=
   \begin{pmatrix}
     \partial_5^K&z^1\wedge & - z^2\wedge&z^3\wedge&z^4\wedge&-\widetilde\pi^3\wedge&-\widetilde\pi^4\wedge&0 \\
     0&(\partial_3^K)^{a_1}&0&0&0&(\widetilde p^1\wedge)^{a_1}&(\widetilde p^2\wedge)^{a_1}&-(z^2\wedge)^{a_1}\\
   0&0&(\partial_2^K)^{a_2-q_{11}}&0&0&0&0&(z^1\wedge)^{a_2-q_{11}}\\
   0&0&0&(\partial_1^K)^{a_3-q_{12}}&0&0&0&0\\
   0&0&0&0&(\partial_1^K)^{a_1^2-q_{11}}&0&0&0\\
 \end{pmatrix},
\end{equation*}
that is
{\small
\begin{align*}
\partial_5^F
&\begin{pmatrix}
\pi^5 \\[0.1cm]
(\pi^3_i)_{i=1,\dots,a_1}\\[0.1cm]
(\pi^2_\ell)_{\ell=1,\dots, a_2-q_{11}}\\[0.1cm]
 (\pi^1_t)_{t=1,\dots,a_3-q_{12}}\\[0.1cm]
(\alpha_r)_{r=1,\dots, a_4-a}\\[0.1cm]
(\pi'^1_s)_{s=1,\dots,a_1^2-q_{11}}\\[0.1cm]
(\beta_u)_{u=1,\dots, a_1a_2-a_1q_{11}-q_{12}+b}\\[0.1cm]
(\gamma_{\ell i})_{\ell=1,\dots, a_2-q_{11};\ i=1,\dots, a_1}
\end{pmatrix}
:=
\\
&\begin{pmatrix}\displaystyle
\partial^K_5(\pi^5)
+\sum_{i=1}^{a_1}z^1_i\wedge\pi^3_i
- \hspace{-.3cm}\sum_{\ell=1}^{a_2-q_{11}}z^2_\ell\wedge  \pi^2_\ell
+ \sum_{t=1}^{a_3-q_{12}}z^3_t\wedge  \pi^1_t
+\sum_{r=1}^{a_4-a} z^4_r\wedge \alpha_r
- \hspace{-.3cm}\sum_{s=1}^{a_1^2-q_{11}}\widetilde \pi^3_s\wedge  \pi'^1_s
- \hspace{-.5cm}\sum_{u=1}^{\tiny\begin{gathered}a_1a_2-a_1q_{11}\\-q_{12}+b\end{gathered}}\hspace{-.3cm}\widetilde\pi^4_u\wedge\beta_u\\
\\
\displaystyle
\Big(
\partial_3^K(\pi^3_i)
+\sum_{s=1}^{a_1^2-q_{11}}{\widetilde p}^1_{si}\wedge\pi'^1_s
+\sum _{u=1}^{\tiny\begin{gathered}a_1a_2-a_1q_{11}\\-q_{12}+b\end{gathered}}\widetilde p^2_{ui}\wedge \beta_u
-\sum_{\ell=1}^{a_2-q_{11}}z^2_\ell\wedge\gamma_{\ell i}
\Big)_{i=1,\dots,a_1}\\[0.5cm]
\displaystyle
\Big(\partial_2^K(\pi^2_\ell)
+\sum_{i=1}^{a_1}z^1_i\wedge  \gamma_{\ell i} \Big)_{\ell=1,\dots, a_2-q_{11}}\\[0.5cm]
\displaystyle
\Big(\partial^K_1(\pi^1_t)\Big)_{t=1,\dots,a_3-q_{12}}\\[0.5cm]
\displaystyle
\Big(\partial_1^K(\pi'^1_s)\Big)_{s=1,\dots,a_1^2-q_{11}}
\end{pmatrix}.
\end{align*}
}
\end{construction}

\vspace{0.12in}

\begin{theorem}
  \label{main}
Let $(R,\fm,\kk)$ be a local ring.
The sequence $F$ constructed in \ref{construction} is  a truncated minimal free resolution of $\kk$  over  $R$, up to homological degree five.
\end{theorem}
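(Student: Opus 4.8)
The plan is to verify directly that $F$ is a complex of free $R$-modules, that its differentials have entries in $\fm$ (minimality), and that it is acyclic through degree five, i.e.\ $\HH{i}{F}=0$ for $1\le i\le 4$. Minimality is immediate from Lemma~\ref{lem: Koszul}: every block entry of every $\partial_j^F$ is either a block of $\partial^K$, which maps $K$ into $\fm K$, or is given by wedging with one of the elements $z^1,z^2,z^3,z^4,\widetilde p^1,\widetilde p^2,\widetilde\pi^3,\widetilde\pi^4$, all of which lie in $\fm K$. The complex property $\partial_j^F\circ\partial_{j+1}^F=0$ is a block-matrix computation; here the key point is that the "defect" blocks are engineered to cancel: the $\widetilde\pi^3\wedge$ term in $\partial_4^F$ composed with $(\widetilde p^1\wedge)^{a_1}$ in $\partial_4^F$ uses $\partial_3^K\widetilde\pi^3_s=\sum_i z^1_i\wedge\widetilde p^1_{si}$ from \eqref{def pi3}, and similarly $\partial_4^K\widetilde\pi^4_u=\sum_i z^1_i\wedge\widetilde p^2_{ui}$ from \eqref{def pi4}; the graded-commutativity of $K$ handles the sign bookkeeping. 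I would carry out $\partial_1^F\partial_2^F=0$ (trivial, since $z^1_i\in\Ker\partial_1^K$), then $\partial_2^F\partial_3^F=0$, $\partial_3^F\partial_4^F=0$, and $\partial_4^F\partial_5^F=0$ in turn, recording for each the identities from Section~\ref{sec multiplication} that are invoked.

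For exactness, the natural approach is descending induction on homological degree using the standard comparison with the Koszul complex and the acyclicity of $K$ in low degrees, or equivalently a direct diagram chase in each degree. Concretely, for degree $i$ I would take a cycle in $F_i$, project onto its various $K$-components, and peel them off one at a time: the bottom components of the form $K_0^{(\cdots)}$ force, via $\partial_1^K$, that certain $\pi^1$'s are cycles in $K_1$, hence boundaries (as $\Ker\partial_1^K/\im\partial_2^K$ only contributes the $A_1$-part, which is accounted for by the extra summands); the $K$-top component, together with the conditions that the images of the various $z$-wedge maps vanish in homology, is exactly the content of Propositions~\ref{z1z2}, \ref{z1z2z3}, and \ref{massey}, which were proved precisely to supply the preimages needed to lift a cycle. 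In other words, Section~\ref{sec multiplication} was organized so that each proposition provides the surjectivity/exactness statement at one spot of the chase: \ref{z1z2} closes up $\HH{2}{F}$, \ref{z1z2z3} and \ref{ker2} close up $\HH{3}{F}$, and \ref{massey} together with the degree-four product analysis closes up $\HH{4}{F}$.

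I expect the main obstacle to be the degree-five exactness, $\HH{4}{F}=0$, since $F_5$ has the most summands and the differential $\partial_5^F$ mixes four different kinds of "correction" columns ($\widetilde\pi^3$, $\widetilde\pi^4$, the $z^2$-block in the last column, and the Massey-type data). The chase there must simultaneously use: the characterization of $\Ker\phi_2$ in Proposition~\ref{ker2}(a) (to handle the $\widetilde p^2$ versus $\widetilde p^1\wedge z^1$ ambiguity), the Massey representative formula in Proposition~\ref{massey} (to recognize which elements of $\Ker\partial_4^K$ are hit by the $\widetilde\pi^3\wedge p^1$ and $z^4$ columns), and Proposition~\ref{z1z2z3} (to absorb the remaining top-degree part into $\im\partial_5^K$ plus the lower summands). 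Keeping the bookkeeping of which generators of $\Ker\partial_4^K$ (listed in the paragraph following \eqref{z4}) get absorbed by which column of $\partial_5^F$ is the delicate part; everything else is a routine, if lengthy, verification. I would present the argument degree by degree, and in degree five split off the sub-chase for each $K$-component of a given cycle, citing the relevant proposition at each step rather than re-deriving it.
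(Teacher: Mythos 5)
Your plan coincides with the paper's proof: minimality from Lemma~\ref{lem: Koszul}, the complex property from the defining relations \eqref{def pi3} and \eqref{def pi4} plus graded-commutativity, and exactness by a degree-by-degree chase that peels off the $K$-components of a cycle and invokes the propositions of Section~\ref{sec multiplication} to supply the needed preimages. The only slip is an off-by-one in your attributions: Proposition~\ref{z1z2} is what closes $\HH{3}{F}$ (exactness at $F_3$), while Proposition~\ref{z1z2z3}, Proposition~\ref{ker2}, Proposition~\ref{massey}, and Lemma~\ref{ind p1} are all consumed in the degree-four chase; exactness at $F_2$ needs only that $\{[z^1_i]\}$ is a basis of $A_1$ together with the description of $\Ker\partial_2^K$.
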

\begin{proof} The minimality follows from Lemma~\ref{lem: Koszul}.
We show exactness at each degree by using  the Koszul relations in the complex $K$, and the basis elements and maps  defined in Section \ref{sec multiplication}.

\vspace{0.1in}
\noindent
{\bf Exactness at degree one.}
$\im\partial_2^F\subseteq \Ker\partial_1^F$:

\begin{align*}
  \partial_1^F \circ \partial_2^F
\begin{pmatrix}
\pi^2\\[0.1cm]
(\alpha_i)_{i=1,\dots, a_1}
\end{pmatrix}
=\partial_1^K\begin{pmatrix}\partial_2^{K}(\pi^2)+\sum_{i=1}^{a_1}z_i^1\wedge\alpha_i\end{pmatrix}
=0.
\end{align*}

$\im\partial_2^F\supseteq \Ker\partial_1^F$:   If  $\pi^1\in \Ker\partial_1^F$, then there exist $\pi^2\in K_2$  and $\alpha_i\in R$ such that
$$
\pi^1=\partial_2^K(\pi^2)+\sum_{i=1}^{a_1}z_i^1\wedge \alpha_i ,
\quad\text{therefore},\quad \pi^1=\partial_2^F\begin{pmatrix}
\pi^2\\[0.1cm]
(\alpha_i)_{i=1,\dots, a_1}
\end{pmatrix}.$$

\noindent
{\bf Exactness at degree two.}
$\im\partial_3^F\subseteq \Ker\partial_2^F$:

\begin{align*}
\partial_2^F\circ\partial_3^F
\begin{pmatrix}
  \pi^3\\[0.1cm]
  (\pi^1_i)_{i=1,\dots,a_1}\\[0.1cm]
  (\alpha_\ell)_{\ell=1,\dots,a_2-q_{11}}
\end{pmatrix}
&=\partial_2^F
\begin{pmatrix}
\partial^K_3(\pi^3)+\sum_{i=1}^{a_1}z^1_i\wedge\pi^1_i - \sum_{\ell=1}^{a_2-q_{11}} z^2_\ell\wedge \alpha_\ell\\[0.1cm]
(\partial^K_1(\pi^1_i))_{i=1,\dots,{a_1}}
\end{pmatrix}
\\
&=\partial_2^K
\Big(
\partial^K_3(\pi^3)
+\sum_{i=1}^{a_1}z^1_i\wedge\pi^1_i
- \sum_{\ell=1}^{a_2-q_{11}} z^2_\ell\wedge\alpha_\ell
\Big)+\sum_{i=1}^{a_1}z^1_i\wedge\partial^K_1(\pi^1_i)
\\
&=
-\sum_{i=1}^{a_1} z^1_i\wedge \partial_1^K(\pi^1_i)
+\sum_{i=1}^{a_1}z^1_i\wedge \partial^K_1(\pi^1_i) =0.
\end{align*}

$\im\partial_3^F\supseteq \Ker\partial_2^F$:  If
$\begin{pmatrix}
\pi^2\\[0.1cm]
(\alpha_i)_{i=1,\dots, a_1}
\end{pmatrix}\in\Ker\partial_2^F$, then
$\partial_2^{K}(\pi^2)+\sum_{i=1}^{a_1}z_i^1\wedge \alpha_i=0$  in $K_1.$
In particular, we have $\sum_{i=1}^{a_1}[z_i^1]\wedge[\alpha_i]=0$ in $A_1$. Since  $\{[z^1_i]\}_{i=1,\dots, a_1}$ is a basis of $A_1$, we get $\alpha_i\in \fm$.
It follows that for each $i$ there  exists $\pi^1_i\in K_1$ such that
\begin{equation}
\label{eq22}
\alpha_i= \partial_1^K(\pi^1_i).
\end{equation}
Hence  we have
$$0=\partial_2^{K}(\pi^2)+\sum_{i=1}^{a_1} z^1_i\wedge\partial_1^K(\pi^1_i)=\partial_2^{K} \Big(\pi^2-\sum_{i=1}^{a_1}z^1_i\wedge \pi^1_i\Big).$$
In particular,  there exist
$\pi^3\in K_3$, $p^1_i\in\Ker\partial_1^K$, and $\beta_\ell\in R$  such that
$$\pi^2-\sum_{i=1}^{a_1}z^1_i\wedge \pi^1_i=\partial_3^K(\pi^3)+\sum_{i=1}^{a_1} z_i^1\wedge p^1_i+\sum_{\ell=1}^{a_2-q_{11}}z^2_\ell\wedge \beta_\ell,\quad\text{which implies}$$
\begin{equation}
\label{eq23}
 \pi^2=\partial_3^K(\pi^3)+\sum_{i=1}^{a_1}z^1_i\wedge(\pi^1_i+p^1_i)+\sum_{\ell=1}^{a_2-q_{11}} z^2_\ell\wedge \beta_\ell.
\end{equation}
Combining (\ref{eq22}) and (\ref{eq23}) we get
$$ \Ker \partial^F_2 \ni
\begin{pmatrix}
\pi^2\\[0.1cm]
(\alpha_i)_{i=1,\dots, a_1}
\end{pmatrix}
=
\partial_3^F
\begin{pmatrix}
\pi^3\\[0.1cm]
 (\pi^1_i+p^1_i)_{i=1,\dots,a_1}\\[0.1cm]
- (\beta_\ell)_{\ell=1,\dots,a_2-q_{11}}
\end{pmatrix}.
$$

\noindent
{\bf Exactness at degree three.}
$\im\partial_4^F\subseteq \Ker\partial_3^F$:  We show that both components of the element

\noindent
$\partial_3^F \circ \partial_4^F
\begin{pmatrix}
\pi^4 \\[0.1cm]
(\pi^2_i)_{i=1, \dots, a_1}\\[0.1cm]
 (\pi^1_\ell)_{\ell=1, \dots, a_2-q_{11}}\\[0.1cm]
 (\alpha_t)_{t=1, \dots, a_3-q_{12}}\\[0.1cm]
 (\beta_s)_{s=1, \dots, a_1^2-q_{11}}
\end{pmatrix}
$
are zero. The first component is:
\begin{align*}
&\partial_3^K\Big(
\partial^K_4(\pi^4)
+\sum_{i=1}^{a_1}z^1_i\wedge \pi^2_i
+\sum_{\ell=1}^{a_2-q_{11}}  z^2_\ell\wedge\pi^1_\ell
+\sum_{t=1}^{a_3-q_{12}}z^3_t\wedge \alpha_t
- \sum_{s=1}^{a_1^2-q_{11}}{\widetilde\pi}^3_s\wedge\beta_s
\Big)\\
& \qquad
+\sum_{i=1}^{a_1}z^1_i\wedge \partial_2^K(\pi^2_i)
+ \sum_{i=1}^{a_1} z^1_i\wedge \Big(\sum_{s=1}^{a_1^2-q_{11}}{\widetilde p}^1_{si}\wedge\beta_s \Big)
 - \sum_{\ell=1}^{a_2-q_{11}}z^2_\ell\wedge\partial^K_1(\pi^1_\ell)\\
=&
-\sum_{i=1}^{a_1} z^1_i\wedge \partial_2^K(\pi^2_i)
+\sum_{\ell=1}^{a_2-q_{11}}z^2_\ell\wedge \partial^K_1(\pi^1_\ell)
- \sum_{s=1}^{a_1^2-q_{11}}\partial^K_3({\widetilde\pi}^3_s)\wedge\beta_s
\\
& \qquad +\sum_{i=1}^{a_1} z^1_i\wedge \partial_2^K(\pi^2_i)
+ \sum_{s=1}^{a_1^2-q_{11}} \Big(\sum_{i=1}^{a_1}z^1_i\wedge{\widetilde p}^1_{si}\Big)\wedge\beta_s
- \sum_{\ell=1}^{a_2-q_{11}}z^2_\ell\wedge\partial^K_1(\pi^1_\ell)
=0.
\end{align*}
The last equality follows from the definition of ${\widetilde\pi}^3_s$ in \eqref{def pi3}.
As ${\widetilde p}^1_{si}\in\Ker\partial^K_1$ for all $i$ and $s$, we have $\partial_1^K\Big(\partial_2^K(\pi^2_i)+\sum_{s=1}^{a_1^2-q_{11}}{\widetilde p}^1_{si}\wedge\beta_s\Big)
=0,$  for each $1\leq i \leq a_1$,  so the second component is zero.\\

$\im\partial_4^F\supseteq \Ker\partial_3^F$:
If
$\begin{pmatrix}
\pi^3\\[0.1cm]
 (\pi^1_i)_{i=1,\dots,a_1}\\[0.1cm]
(\alpha_\ell)_{\ell=1,\dots,a_2-q_{11}}
\end{pmatrix}\in\Ker\partial_3^F$,
then
\begin{align}
\label{eq31}
\partial^K_3(\pi^3)+\sum_{i=1}^{a_1}z^1_i\wedge\pi^1_i - \sum_{\ell=1}^{a_2-q_{11}} z^2_\ell\wedge \alpha_\ell &=0 \quad \mbox{in}\ K_2, \quad\text{and}\\
\partial^K_1(\pi^1_i)&=0 \quad \mbox{ for all}\  1\leq i\leq {a_1}.
\end{align}
It follows that for each $i$,  there exist $\pi^2_i\in K_2$ and $\beta_{ij}\in R$ such that
\begin{equation}
\label{pi1i}
\pi^1_i=\partial_2^K(\pi^2_i)+\sum_{j=1}^{a_1}z_j^1\wedge\beta_{ij}.
\end{equation}
Thus (\ref{eq31}) becomes:
\begin{equation}
\label{eq34}
\partial^K_3
\Big(\pi^3-\sum_{i=1}^{a_1}z^1_i\wedge\pi^2_i \Big)
+\sum_{i=1}^{a_1}z^1_i\wedge \Big(\sum_{j=1}^{a_1}z_j^1\wedge\beta_{ij}\Big)
- \sum_{\ell=1}^{a_2-q_{11}} z^2_\ell\wedge\alpha_\ell =0.
\end{equation}
In $A_2$ we obtain
$
  \sum_{i,j=1}^{a_1}[z^1_i]\wedge [z_j^1]\wedge [\beta_{ij}] - \sum_{\ell=1}^{a_2-q_{11}} [z^2_\ell]\wedge[\alpha_\ell] =0.
$ By Proposition \ref{z1z2}, we get $[\alpha_\ell]=0$ for all $\ell$ and
$\sum_{j=1}^{a_1} [z_j^1]\wedge [\beta_{ij}]=\sum_{s=1}^{a_1^2-q_{11}}[{\widetilde p}^1_{si}]\wedge[\beta'_s]$ for some $\beta'_s$ in $R$.
In particular, there exist ${\pi'_\ell}^1\in K_1$  and $\pi'^2_i\in K_2$ such that
\begin{align}
\label{alphal}
 \alpha_\ell&=\partial_1^K({\pi'_\ell}^1), \text{ and }
 \\
\sum_{j=1}^{a_1} z_j^1\wedge \beta_{ij}&=\partial^K_2(\pi'^2_i)+\sum_{s=1}^{a_1^2-q_{11}}{\widetilde p}^1_{si}\wedge\beta'_s\nonumber.
\end{align}
The equation \eqref{pi1i} becomes:
\begin{equation}
  \label{pi1i'}
  \pi^1_i=\partial_2^K(\pi^2_i+\pi'^2_i)+\sum_{s=1}^{a_1^2-q_{11}}{\widetilde p}^1_{si}\wedge\beta'_s,
\end{equation}
and equation \eqref{eq34} now becomes:
\begin{align*}
\label{eq35}
0&=\partial^K_3
\Big(\pi^3-\sum_{i=1}^{a_1}z^1_i\wedge\pi^2_i \Big)
+\sum_{i=1}^{a_1}z^1_i\wedge \partial^K_2(\pi'^2_i)
+\sum_{i=1}^{a_1}\sum_{s=1}^{a_1^2-q_{11}}z^1_i\wedge {\widetilde p}^1_{si}\wedge\beta'_s
-  \sum_{\ell=1}^{a_2-q_{11}} z^2_\ell\wedge\partial_1^K({\pi'_\ell}^1)
\\
&
=\partial^K_3
\Big(
\pi^3
-\sum_{i=1}^{a_1}z^1_i\wedge (\pi^2_i+\pi'^2_i)
- \sum_{\ell=1}^{a_2-q_{11}} z^2_\ell\wedge {\pi'_\ell}^1
+ \sum_{s=1}^{a_1^2-q_{11}} {\widetilde\pi}^3_{s}\wedge\beta'_s
\Big),
\end{align*}
where the second equality uses the definition of $\widetilde\pi^3_s$ from \eqref{def pi3}.
Hence, there exist  $\pi^4\in K_4$, $p^2_i\in \Ker \partial_2^K$, $p^1_\ell\in\Ker\partial_1^K$, and $\alpha'_t\in R$ such that
\begin{align*}
&\pi^3
-\sum_{i=1}^{a_1}z^1_i\wedge (\pi^2_i+\pi'^2_i)
- \sum_{\ell=1}^{a_2-q_{11}} z^2_\ell\wedge {\pi'}^1_\ell
+ \sum_{s=1}^{a_1^2-q_{11}} {\widetilde\pi}^3_{s}\wedge\beta'_s
\\
&=\partial_4^K(\pi^4)
+ \sum_{i=1}^{a_1}z^1_i \wedge p^2_i
+\sum_{\ell=1}^{a_2-q_{11}}z^2_\ell\wedge p^1_\ell
+\sum_{t=1}^{a_3-q_{12}}z_t^3\wedge\alpha'_t.
\end{align*}
Thus,
\begin{equation}
\label{pi3}
\pi^3=
\partial_4^K(\pi^4)
+\sum_{i=1}^{a_1}z^1_i\wedge (\pi^2_i+\pi'^2_i+p^2_i)
+\sum_{\ell=1}^{a_2-q_{11}} z^2_\ell\wedge ({\pi'_\ell}^1+p^1_\ell)
+\sum_{t=1}^{a_3-q_{12}}z^3_t\wedge\alpha'_t
-\sum_{s=1}^{a_1^2-q_{11}}{\widetilde\pi}^3_{s}\wedge\beta'_s.
\end{equation}
The equations  \eqref{alphal},\ \eqref{pi1i'}, and (\ref{pi3}),\ now yield
$$ \Ker \partial^F_3 \ni
\begin{pmatrix}
\pi^3\\[0.1cm]
 (\pi^1_i)_{i=1,\dots,a_1}\\[0.1cm]
(\alpha_\ell)_{\ell=1,\dots,a_2-q_{11}}
\end{pmatrix}=\partial_4^F
\begin{pmatrix}
\pi^4 \\[0.1cm]
(\pi^2_i+\pi'^2_i+p^2_i)_{i=1,\dots,a_1}\\[0.1cm]
 ({\pi'_\ell}^1+p^1_\ell)_{\ell=1,\dots, a_2-q_{11}}\\[0.1cm]
(\alpha'_t)_{t=1,\cdots,a_3-q_{12}}\\[0.1cm]
 (\beta'_s)_{s=1,\dots,a_1^2-q_{11}}
\end{pmatrix}.
$$

\noindent
{\bf Exactness at degree four.} $\im\partial_5^F\subseteq \Ker\partial_4^F$: We show that all three components of the element\\
$\partial_4^F\circ\partial_5^F
\begin{pmatrix}
\pi^5 \\[0.1cm]
(\pi^3_i)_{i=1,\dots,a_1}\\[0.1cm]
(\pi^2_\ell)_{\ell=1,\dots, a_2-q_{11}}\\[0.1cm]
 (\pi^1_t)_{t=1,\dots,a_3-q_{12}}\\[0.1cm]
(\alpha_r)_{r=1,\dots, a_4-a}\\[0.1cm]
(\pi'^1_s)_{s=1,\dots,a_1^2-q_{11}}\\[0.1cm]
(\beta_u)_{u=1,\dots, a_1a_2-a_1q_{11}-q_{12}+b}\\[0.1cm]
(\gamma_{\ell i})_{\ell=1.\dots, a_2-q_{11};\ i=1,\dots, a_1}
\end{pmatrix}
$
are zero. The first component is:
\begin{align*}
&\partial_4^K\Big(
\partial^K_5(\pi^5)
+\hspace{-.1cm}\sum_{i=1}^{a_1}z^1_i\wedge\pi^3_i
-\hspace{-.2cm}\sum_{\ell=1}^{a_2-q_{11}}\hspace{-.2cm}z^2_\ell\wedge\pi^2_\ell
+\hspace{-.2cm}\sum_{t=1}^{a_3-q_{12}}\hspace{-.2cm}z^3_t\wedge  \pi^1_t
+\hspace{-.2cm}\sum_{r=1}^{a_4-a} z^4_r\wedge \alpha_r
-\hspace{-.2cm}\sum_{s=1}^{a_1^2-q_{11}}\hspace{-.1cm}\widetilde \pi^3_s\wedge  \pi'^1_s
-\hspace{-.5cm}\sum_{u=1}^{\tiny\begin{gathered}a_1a_2-a_1q_{11}\vspace{-.1cm}\\-q_{12}+b\end{gathered}}\hspace{-.4cm}\widetilde\pi^4_u\wedge\beta_u\Big)
\\
&\quad
+\sum_{i=1}^{a_1}z^1_i\wedge
\Big(
\partial_3^K(\pi^3_i)
+\sum_{s=1}^{a_1^2-q_{11}}{\widetilde p}^1_{si}\wedge\pi'^1_s
+\sum _{u=1}^{\tiny\begin{gathered}a_1a_2-a_1q_{11}\vspace{-.1cm}\\-q_{12}+b\end{gathered}}\widetilde p^2_{ui}\wedge \beta_u
-\sum_{\ell=1}^{a_2-q_{11}}z^2_\ell\wedge\gamma_{\ell i}
\Big)
\\
&\quad
+\sum_{\ell=1}^{a_2-q_{11}}z^2_\ell\wedge
\Big(
\partial_2^K(\pi^2_\ell)
+\sum_{i=1}^{a_1}z^1_i\wedge \gamma_{i\ell}
\Big)
+\sum_{t=1}^{a_3-q_{12}}z^3_t\wedge\partial^K_1(\pi^1_t)
- \sum_{s=1}^{a_1^2-q_{11}}\widetilde \pi^3_s\wedge \partial_1^K(\pi'_s)
\\
&
= -\sum_{i=1}^{a_1}z^1_i\wedge\partial_3^K(\pi^3_i)
-\sum_{\ell=1}^{a_2-q_{11}}z^2_\ell\wedge\partial_2^K(\pi^2_\ell)
-\sum_{t=1}^{a_3-q_{12}}z^3_t\wedge  \partial_1^K(\pi^1_t)
-\sum_{s=1}^{a_1^2-q_{11}}\partial_3^K(\widetilde \pi^3_s)\wedge  \pi'^1_s
\\
&
\quad
+\sum_{s=1}^{a_1^2-q_{11}}\widetilde \pi^3_s\wedge  \partial_1^K(\pi'^1_s)
-\sum_{u=1}^{\tiny\begin{gathered}a_1a_2-a_1q_{11}\vspace{-.1cm}\\-q_{12}+b\end{gathered}}\partial_4^K(\widetilde\pi^4_u)\wedge\beta_u
+\sum_{i=1}^{a_1}z^1_i\wedge\partial_3^K(\pi^3_i)
+\sum_{s=1}^{a_1^2-q_{11}}\Big(\sum_{i=1}^{a_1}z^1_i\wedge{\widetilde p}^1_{si}\Big)\wedge\pi'^1_s
\\
&\quad
+\sum _{u=1}^{\tiny\begin{gathered}a_1a_2-a_1q_{11}\vspace{-.1cm}\\-q_{12}+b\end{gathered}}\Big(\sum_{i=1}^{a_1}z^1_i\wedge \widetilde p^2_{ui}\Big)\wedge \beta_u
-\sum_{\ell=1}^{a_2-q_{11}}\sum_{i=1}^{a_1}z^1_i\wedge z^2_\ell\wedge\gamma_{\ell i}
+\sum_{\ell=1}^{a_2-q_{11}}z^2_\ell\wedge\partial_2^K(\pi^2_\ell)
\\
& \quad+\sum_{\ell=1}^{a_2-q_{11}}\sum_{i=1}^{a_1}z^2_\ell\wedge z^1_i\wedge \gamma_{\ell i}
+\sum_{t=1}^{a_3-q_{12}}z^3_t\wedge\partial^K_1(\pi^1_t)
-\hspace{-0.3cm}\sum_{s=1}^{a_1^2-q_{11}}\widetilde \pi^3_s\wedge \partial_1^K(\pi'^1_s)=0,
\end{align*}
by definitions of $\widetilde\pi^3_s$,  $\widetilde p^1_{si}$, $\widetilde\pi^4_u$, and $\widetilde p^2_{ui}$.
For each $1\leq i\leq a_1$, as $\widetilde p_{si}^1$ is in $\Ker\partial^K_1$ and $\widetilde p_{ui}^2$ is in $\Ker\partial^K_2$,
we have
\begin{align*}
&\partial_2^K\Big(
\partial_3^K(\pi^3_i)
+\sum_{s=1}^{a_1^2-q_{11}}{\widetilde p}^1_{si}\wedge\pi'^1_s
+\sum _{u=1}^{\tiny\begin{gathered}a_1a_2-a_1q_{11}\vspace{-.1cm}\\-q_{12}+b\end{gathered}}\widetilde p^2_{ui}\wedge \beta_u
-\sum_{\ell=1}^{a_2-q_{11}}z^2_\ell\wedge\gamma_{\ell i}
\Big)
+\sum_{s=1}^{a_1^2-q_{11}}\widetilde p_{si}^1\wedge \partial_1^K(\pi'^1_s)
\\
&=
-\sum_{s=1}^{a_1^2-q_{11}}{\widetilde p}^1_{si}\wedge\partial_1^K(\pi'^1_s)
+\sum_{s=1}^{a_1^2-q_{11}}\widetilde p_{si}^1\wedge \partial_1^K(\pi'^1_s)
= 0.
\end{align*}
Therefore, the second component is zero. For each $1\leq \ell\leq  a_2-q_{11}$, $ \partial_1^K\big(\partial_2^K(\pi^2_\ell)+\sum_{i=1}^{a_1} z^1_i\wedge \gamma_{\ell i}\big)=0.$
Thus, the third component is zero.

$\im\partial_5^F\supseteq \Ker\partial_4^F$:
If $\begin{pmatrix}
\pi^4 \\[0.1cm]
(\pi^2_i)_{i=1,\dots,a_1}\\[0.1cm]
 (\pi^1_\ell)_{\ell=1,\dots, a_2-q_{11}}\\[0.1cm]\
 (\alpha_t)_{t=1,\dots,a_3-q_{12}}\\[0.1cm]
 (\beta_s)_{s=1,\dots,a_1^2-q_{11}}
\end{pmatrix}
\in \Ker\partial_4^F$, then
\begin{align}
\label{eq41}
&\partial^K_4(\pi^4)
+\sum_{i=1}^{a_1}z^1_i\wedge \pi^2_i
+\sum_{\ell=1}^{a_2-q_{11}}z^2_\ell\wedge \pi^1_\ell
+\sum_{t=1}^{a_3-q_{12}}z^3_t\wedge\alpha_t
-\sum_{s=1}^{a_1^2-q_{11}}{\widetilde\pi}^3_s\wedge\beta_s =0,\\
\label{eq42}
&\partial_2^K(\pi^2_i)+\sum_{s=1}^{a_1^2-q_{11}}{\widetilde p}^1_{si}\wedge \beta_s=0
 \quad \mbox{for all}\ 1\leq i\leq a_1,\quad\text{and}
\\
\label{eq43}
&\partial^K_1(\pi^1_\ell)=0 \quad \mbox{for all}\ 1\leq \ell\leq a_2-q_{11}.
\end{align}
The equality \eqref{eq43} implies that there exist $\pi'^2_\ell \in K_2$ and $\gamma_{i \ell}\in R$ such that
\begin{equation}
\label{pi1l}
\pi^1_\ell=\partial_2^K(\pi'^2_\ell)+\sum_{i=1}^{a_1}z^1_i\wedge \gamma_{i\ell}.
\end{equation}
In $A_1^{a_1}$ the equality (\ref{eq42}) becomes $\sum_{s=1}^{a_1^2-q_{11}}([\widetilde p^1_{s1},\dots,[\widetilde p^1_{sa_1}])\wedge [\beta_s]=0.$ Applying Lemma \ref{ind p1} we obtain $[\beta_s]=0$ for all $s$, thus
\begin{equation}
\label{betas}
\beta_s = \partial_1^K(\pi'^1_s) \quad \text{ for some } \pi'^1_s \in K_1.
\end{equation}
The equality (\ref{eq42}) now becomes
$$\partial_2^K(\pi^2_i) + \sum_{s=1}^{a_1^2-q_{11}} {\widetilde p}^1_{si}\wedge \partial_1^K(\pi'^1_s)=
\partial_2^K \Big( \pi^2_i - \sum_{s=1}^{a_1^2-q_{11}}{\widetilde p}^1_{si} \wedge  \pi'^1_s \Big)= 0.
$$
Therefore, there exist  $\pi^3_i$ in $K_3$, $\delta_{ijk}$ and $\gamma'_{i\ell}$ in $R$ such that  for all $i$ we have:
\begin{equation}
\label{pi2i}
\pi^2_i=\partial_3^K(\pi^3_i)+ \sum_{j,k=1}^{a_1} z^1_j\wedge z^1_k\wedge\delta_{ijk}+\sum_{\ell=1}^{a_2-q_{12}}z^2_\ell\wedge\gamma'_{i\ell} + \sum_{s=1}^{a_1^2-q_{11}} {\widetilde p}^1_{si}\wedge \pi'^1_s.
\end{equation}
Putting together \eqref{pi1l}, \eqref{betas}, and \eqref{pi2i}, into the equality \eqref{eq41} we get:
\begin{align*}
0&=\partial^K_4(\pi^4)
  +\sum_{i=1}^{a_1}z^1_i\wedge \Big(\sum_{s=1}^{a_1^2-q_{11}} {\widetilde p}^1_{si}\wedge \pi'^1_s + \partial_3^K(\pi^3_i)+ \sum_{j,k=1}^{a_1} z^1_j\wedge z^1_k\wedge\delta_{ijk}+\sum_{\ell=1}^{a_2-q_{12}}z^2_\ell\wedge\gamma'_{i\ell}\Big)
  \\
  &
 \quad+\sum_{\ell=1}^{a_2-q_{11}}z^2_\ell\wedge \Big(\partial_2^K({\pi'_\ell}^2)+\sum_{i=1}z^1_i\wedge \gamma_{i\ell}\Big)
+\sum_{t=1}^{a_3-q_{12}}z^3_t\wedge\alpha_t
-\sum_{s=1}^{a_1^2-q_{11}}{\widetilde\pi}^3_s\wedge\partial_1^K(\pi'^1_s)\\
&=\partial^K_4\Big(\pi^4 + \sum_{s=1}^{a_1^2-q_{11}} \widetilde\pi^3_s\wedge\pi'^1_s-\sum_{i=1}^{a_1}z^1_i\wedge\pi^3_i+\sum_{\ell=1}^{a_2-q_{11}}z^2_\ell\wedge{\pi'_\ell}^2\Big)\\
    &\quad
    +\Big(\sum_{i,j,k=1}^{a_1}z^1_i\wedge z^1_j\wedge z^1_k\wedge\delta_{ijk}
    +\sum_{i=1}^{a_1}\sum_{\ell=1}^{a_2-q_{11}}z^1_i\wedge z^2_\ell\wedge (\gamma'_{i\ell}+\gamma_{i\ell})
    +\sum_{t=1}^{a_3-q_{12}}z^3_t\wedge\alpha_t\Big).\nonumber
  \end{align*}
In $A_3$ this reduces to:
$$\sum_{i,j,k=1}^{a_1}[z^1_i]\wedge [z^1_j]\wedge [z^1_k]\wedge[\delta_{ijk}]
+\sum_{i=1}^{a_1}\sum_{\ell=1}^{a_2-q_{11}}[z^1_i]\wedge [z^2_\ell]\wedge [\gamma'_{i\ell}+\gamma_{i\ell}]
+\sum_{t=1}^{a_3-q_{12}}[z^3_t]\wedge[\alpha_t]=0.$$
By Proposition \ref{z1z2z3}, for each $t$ there exists $\pi''^1_t\in K_1$, and for each $i$ there exist elements $\delta'_{u}, \varepsilon_{js}$ in $R$, and $\pi'^3_i\in K_3$ such that

\begin{align}
\label{alphat}
\alpha_t&=\partial_1^K({\pi''_t}^1),  \text{ and } \\
\sum_{j,k=1}^{a_1} z^1_j \wedge z^1_k \wedge \delta_{ijk}
+ \sum_{\ell=1}^{a_2-q_{11}} z^2_\ell \wedge (\gamma_{i\ell} + \gamma'_{i\ell})
&= \partial_3^K({\pi_i'}^3) + \hspace{-0.2cm} \sum_{u=1}^{\tiny\begin{gathered}a_1a_2 - a_1q_{11}
\vspace{-0.1cm}\\ -q_{12}+b \end{gathered}}\hspace{-0.2cm} \widetilde p^2_{ui} \wedge \delta'_u
+\sum_{j=1}^{a_1}\sum_{s=1}^{a_1^2-q_{11}} \widetilde p^1_{si} \wedge z^1_j \wedge \varepsilon_{js} \nonumber.
\end{align}
Thus, \eqref{eq41} further becomes:
\begin{align*}
0&=\partial^K_4\Big(\pi^4 + \sum_{s=1}^{a_1^2-q_{11}} \widetilde\pi^3_s\wedge \pi'^1_s-\sum_{i=1}^{a_1}z^1_i\wedge\pi^3_i+\sum_{\ell=1}^{a_2-q_{11}}z^2_\ell\wedge {\pi'_\ell}^2\Big)\\
&\quad + \sum_{i=1}^{a_1}z^1_i\wedge\Big(\partial_3^K({\pi'_i}^3)+\sum_{u=1}^{\tiny\begin{gathered}a_1a_2-a_1q_{11}
\vspace{-0.1cm}\\
-q_{12}+b\end{gathered}} \widetilde p^2_{ui} \wedge {\delta'_u}
+\sum_{j=1}^{a_1}\sum_{s=1}^{a_1^2-q_{11}} \widetilde p^1_{si} \wedge z^1_j \wedge \varepsilon_{js}\Big) +\sum_{t=1}^{a_3 - q_{12}} z^3_t\wedge \partial_1^K({\pi''_t}^1)\\
&=\partial^K_4\Big(\pi^4 + \sum_{s=1}^{a_1^2-q_{11}} \widetilde\pi^3_s\wedge \pi'^1_s-\sum_{i=1}^{a_1}z^1_i\wedge\pi^3_i+\sum_{\ell=1}^{a_2-q_{11}}z^2_\ell\wedge \pi'^2_\ell\\
&\quad-\sum_{i=1}^{a_1}z^1_i\wedge {\pi'_i}^3+\sum_{u=1}^{\tiny\begin{gathered}a_1a_2-a_1q_{11}
\vspace{-0.1cm}\\
-q_{12}+b\end{gathered}} \widetilde \pi^4_{u} \wedge \delta'_u+\sum_{j=1}^{a_1}\sum_{s=1}^{a_1^2-q_{11}} \widetilde \pi^3_{s} \wedge z^1_j \wedge \varepsilon_{js}-\sum_{t=1}^{a_3-q_{12}}z^3_t\wedge {\pi''_t}^1\Big).
\end{align*}

The second equality above follows from definitions of $\widetilde\pi^3_s$ and $\widetilde\pi^4_u$ in  \eqref{def pi3} and \eqref{def pi4}. Therefore, there exist $\pi^5\in K_5$, $\delta_r\in R$,  $p^3_i\in\Ker\partial^K_3,  p^2_\ell\in\Ker\partial^K_2$ and $p^1_s\in\Ker\partial^K_1$  with $p^1_s$ as in Proposition~\ref{massey}, such that
\begin{align*}
&\pi^4 - \sum_{i=1}^{a_1}z^1_i\wedge (\pi^3_i+ {\pi'_i}^3) + \sum_{\ell=1}^{a_2-q_{11}}z^2_\ell\wedge {\pi'_\ell}^2 -\sum_{t=1}^{a_3-q_{12}}z^3_t\wedge{\pi''_t}^1  \\
&\qquad + \sum_{s=1}^{a_1^2-q_{11}} \widetilde\pi^3_s\wedge \Big(\pi'^1_s + \sum_{j=1}^{a_1}z^1_j \wedge \varepsilon_{js} \Big) +\sum_{u=1}^{\tiny\begin{gathered}a_1a_2-a_1q_{11} \vspace{-0.1cm}\\-q_{12}+b\end{gathered}} \widetilde \pi^4_{u} \wedge {\delta'_u}  \\
&= \partial_5^K(\pi^5)+  \sum_{i=1}^{a_1} z^1_i \wedge p^3_i+  \sum_{\ell=1}^{a_2-q_{11}} z^2_\ell \wedge p^2_\ell +\sum_{s=1}^{a_1^2-q_{11}} \widetilde \pi^3_s \wedge p^1_s +\sum_{r=1}^{a_4-a} z^4_r \wedge \delta_r.
\end{align*}
This implies:
\begin{equation}
  \label{pi4}
  \begin{aligned}
\pi^4&= \partial_5^K(\pi^5)
+\sum_{i=1}^{a_1} z^1_i \wedge (\pi^3_i+ {\pi'_i}^3 + p^3_i)
-\sum_{\ell=1}^{a_2-q_{11}} z^2_\ell \wedge ( {\pi'_\ell}^2 - p^2_\ell)
+ \sum_{t=1}^{a_3-q_{12}}z^3_t\wedge {\pi''_t}^1
+\sum_{r=1}^{a_4-a} z^4_r \wedge \delta_r \\
&\quad - \sum_{s=1}^{a_1^2-q_{11}} \widetilde\pi^3_s\wedge \Big(\pi'^1_s
+ \sum_{j=1}^{a_1}z^1_j \wedge \varepsilon_{js} - p^1_s \Big)
- \sum_{u=1}^{\tiny\begin{gathered}a_1a_2-a_1q_{11} \vspace{-0.1cm}\\-q_{12}+b\end{gathered}} \widetilde \pi^4_{u} \wedge \delta'_u.\\
\end{aligned}
\end{equation}
By \eqref{alphat}, the expression \eqref{pi2i} becomes:
\begin{align}
 \label{pi2}
\pi^2_i&=\partial_3^K(\pi^3_i)
+\sum_{j,k=1}^{a_1} z^1_j\wedge z^1_k\wedge\delta_{ijk}
+\sum_{\ell=1}^{a_2-q_{12}}z^2_\ell\wedge{\gamma'_{i\ell}}
+\sum_{s=1}^{a_1^2-q_{11}} {\widetilde p}^1_{si}\wedge \pi'^1_s
 \\
 &=\partial_3^K(\pi^3_i+{\pi'_i}^3)+ \sum_{s=1}^{a_1^2-q_{11}} {\widetilde p}^1_{si}\wedge \Big(\pi'^1_s + \sum_{j=1}^{a_1} z^1_j \wedge \varepsilon_{js} \Big) + \hspace{-0.2cm} \sum_{u=1}^{\tiny\begin{gathered}a_1a_2 - a_1q_{11}
\vspace{-0.1cm}\\ -q_{12}+b \end{gathered}} \widetilde p^2_{ui} \wedge\delta'_u - \sum_{\ell=1}^{a_2-q_{11}} z^2_\ell \wedge \gamma_{i\ell} \nonumber.
\end{align}
We conclude that by using \eqref{pi1l}, \eqref{betas}, \eqref{alphat}, \eqref{pi4}, \eqref{pi2}, and Proposition~\ref{massey}, the chosen kernel element is in the image of $\partial^F_5$:

\begin{equation*}
\Ker\partial^F_4 \ni
\begin{pmatrix}
\pi^4 \\[0.1cm]
(\pi^2_i)_{i=1,\dots,a_1}\\[0.1cm]
 (\pi^1_\ell)_{\ell=1,\dots, a_2-q_{11}}\\[0.1cm]\
 (\alpha_t)_{t=1,\dots,a_3-q_{12}}\\[0.1cm]
 (\beta_s)_{s=1,\dots,a_1^2-q_{11}}
 \end{pmatrix}
  =
\partial_5^F
\begin{pmatrix}
\pi^5 \\[0.1cm]
(\pi^3_i+{\pi'_i}^3 + p^3_i)_{i=1,\dots,a_1}\\[0.1cm]
(\pi'^2_\ell - p^2_\ell) _{\ell=1,\dots, a_2-q_{11}}\\[0.1cm]
 (\pi''^1_t)_{t=1,\dots,a_3-q_{12}}\\[0.1cm]
(\delta_r)_{r=1,\dots, a_4-a}\\[0.1cm]
\Big(\pi'^1_s +
\sum_{j=1}^{a_1}z^1_j \wedge \varepsilon_{js} - p^1_s \Big)_{s=1,\dots,a_1^2-q_{11}} \\[0.1cm]
(\delta'_u)_{u=1,\dots, a_1a_2-a_1q_{11}-q_{12}+b}\\[0.1cm]
(\gamma_{\ell i})_{\ell=1,\dots, a_2-q_{11};\ i=1,\dots, a_1}
\end{pmatrix}.
\end{equation*}
\end{proof}

\begin{remark}
If all multiplications on the algebra  $A$ are trivial, that is, $q_{ij}=0$  for all $i,j\geq 1$, and all Massey operations are zero, Golod's construction in \cite{G} gives a minimal free resolution of the residue field $\kk$ of a local ring $R$ in terms of the
  Koszul complex $K$ of $R$. Without these assumptions, our complex $F$  in Construction \ref{construction} generalizes Golod's resolution up to degree five.
\end{remark}

%%%%%%%%%%%%%%%%%%%%%%%%%%%%%%%%%%%%%%%%%%%%%%%
%%%%%%%%%%%%%%%%%%%%%%%%%%%%%%%%%%%%%%%%%%%%%%%

\section{Applications of the Construction}
\label{sec applications}
%%%%%%%%%%%%%%%%%%%%%%%%%%%%%%%%%%%%%%%%%%%%%%
For a local ring $R$ of embedding dimension $n$ and codepth $c$, set $\beta_i:=\rank_{\kk}\Tor{i}{R}{\kk}{\kk}$ for $i\geq 0$ for the Betti numbers of $\kk$ over $R$. Let $\sum_{i=0}^\infty b_i t^i$ denote the series on the right hand side of the inequality \eqref{golodP}. The sequence  $\CP:=\{\CP_i\}_{i\geq 0}$ defined by:
 \begin{equation}
 \label{def CP}
 \CP_i:=b_i-\beta_i, \quad\text{for all } i\geq 0,
 \end{equation}
gives the coefficients of the series $\CP(t)$ defined in the Introduction. The ring $R$ is Golod if and only if $\CP_i=0$ for all $i\geq 0$. Our goal in this section is to give a description of the sequence $\{\CP_i\}_{0\leq i\leq 5}$
in terms of the invariants of the multiplicative structure of the algebra $A=\HH{}{K^R}$ and discuss various consequences of Theorem~\ref{main}.
First, using Theorem \ref{main}, we explicitly describe the Betti numbers $\beta_i$ in terms of those invariants, up to degree five.

\begin{corollary}
\label{betti}
Let $R$ be a local ring of embedding dimension $n$. Let $a_i$, $q_{ij}$, $a$ and $b$ be as in Summary~\ref{summary}. Then the following equalities hold:
   \begin{equation*}
    \beta_0=1,\qquad \beta_1=n,\qquad \beta_2 =\binom n2+a_1,
\end{equation*}
 \begin{equation*}
\beta_3={n\choose 3}+na_1+a_2-q_{11}, \qquad \beta_4={n\choose 4}+{n\choose 2}a_1+ n a_2 + a_3 + a_1^2-(n+1)q_{11}-q_{12},
\end{equation*}
 \begin{equation*}
\beta_5={n\choose 5}+{n\choose 3}a_1+{n\choose 2}a_2 + na_3+a_4 +na_1^2 +2a_1a_2-\Big({{n+1} \choose 2}+2a_1\Big) q_{11}-(n+1)q_{12}+b-a.
\end{equation*}
\end{corollary}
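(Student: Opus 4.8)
The plan is to read the Betti numbers directly off Theorem~\ref{main}. By that theorem the complex $F$ of Construction~\ref{construction} is a \emph{minimal} free resolution of $\kk$ in homological degrees at most five, so all entries of its differentials lie in $\fm$ and therefore $\beta_i=\rank_R F_i$ for $0\le i\le 5$. Thus the entire computation reduces to tallying the ranks of the free modules $F_0,\dots,F_5$.

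Each $F_i$ is displayed in Construction~\ref{construction} as a finite direct sum of copies of the Koszul modules $K_j$. Since $K=K^R$ is the Koszul complex on a minimal generating set of $\fm$, which has $n$ elements, one has $\rank_R K_j=\binom{n}{j}$ for every $j\ge 0$ (with the convention $\binom{n}{j}=0$ for $j>n$). Applying this to $F_0=K_0$ and $F_1=K_1$ gives $\beta_0=1$ and $\beta_1=n$; applying it to $F_2=K_2\oplus K_0^{a_1}$ and to $F_3=K_3\oplus K_1^{a_1}\oplus K_0^{a_2-q_{11}}$ gives $\beta_2=\binom n2+a_1$ and $\beta_3=\binom n3+na_1+a_2-q_{11}$, and applying it to $F_4$ gives, after collecting the two $q_{11}$-contributions from $K_1^{a_2-q_{11}}$ and $K_0^{a_1^2-q_{11}}$, the stated value of $\beta_4$.

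For $\beta_5$ one proceeds identically with the eight-summand decomposition of $F_5$. The only step needing a little care is the coefficient of $q_{11}$: one sums $-\binom n2 q_{11}$ from $K_2^{a_2-q_{11}}$, $-n q_{11}$ from $K_1^{a_1^2-q_{11}}$, and $-a_1 q_{11}$ from each of the two summands $K_0^{a_1a_2-a_1q_{11}-q_{12}+b}$ and $K_0^{a_1a_2-a_1q_{11}}$, obtaining the coefficient $-\bigl(\binom n2+n+2a_1\bigr)q_{11}$, which is rewritten as $-\bigl(\binom{n+1}{2}+2a_1\bigr)q_{11}$ via the identity $\binom n2+n=\binom{n+1}{2}$; similarly the $q_{12}$-terms from $K_1^{a_3-q_{12}}$ and $K_0^{a_1a_2-a_1q_{11}-q_{12}+b}$ combine to $-(n+1)q_{12}$. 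All remaining summands assemble without further simplification into the asserted formula for $\beta_5$.

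There is no genuine obstacle here: the corollary is an immediate bookkeeping consequence of Theorem~\ref{main}, and the only nonautomatic manipulation is the binomial identity $\binom n2+n=\binom{n+1}{2}$ used to put the degree-five coefficient of $q_{11}$ in closed form. I would present the argument as a short direct computation, perhaps displaying the expansion of $\beta_5$ line by line.
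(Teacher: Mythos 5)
Your proposal is correct and follows exactly the paper's own argument: by minimality (Theorem~\ref{main} together with Lemma~\ref{lem: Koszul}) one has $\beta_i=\rank_R F_i$, and the formulas follow by summing $\rank_R K_j=\binom nj$ over the summands of each $F_i$ in Construction~\ref{construction}, with the same collection of the $q_{11}$- and $q_{12}$-terms and the identity $\binom n2+n=\binom{n+1}2$. No differences worth noting.
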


\begin{proof} This is a direct consequence of Theorem \ref{main}. The formulas for $\beta_0,\dots,\beta_3$ are clear, and we simplify the following expressions for $\beta_4$ and $\beta_5$ to obtain the ones in the statement:
  \begin{align*}
  \beta_4&={n\choose 4}+{n\choose 2}a_1+a_1^2-q_{11}+n(a_2-q_{11})
+a_3-q_{12}\\
\beta_5&={n\choose 5}+{n\choose 3}a_1+n(a_1^2-q_{11})+a_1a_2-a_1q_{11}-q_{12}+b\\
&\hspace{0.5cm}+{n\choose 2}(a_2-q_{11})+a_1a_2-a_1q_{11}+n(a_3-q_{12})+a_4-a.
\end{align*}
\end{proof}

 \begin{proposition}
 \label{prop:P}
 Let $R$ be a local ring of embedding dimension $n$ and codepth $c$. Let $a_i$, $q_{ij}$, $a$, and $b$ be as in Summary~\ref{summary}. The sequence $\CP$ from \eqref{def CP} satisfies the following equalities:
$$ \CP_0=\CP_1=\CP_2=0,\qquad  \CP_3=q_{11},\qquad \CP_4=(n+1)q_{11}+q_{12},\qquad\text{and}$$
 $$\CP_5=\Big({{n+1} \choose 2}+2a_1\Big)q_{11}+(n+1)q_{12}+a-b.$$
 \end{proposition}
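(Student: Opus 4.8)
The plan is to read off the coefficients $b_i$ of the Golod upper bound series in \eqref{golodP} for $0\le i\le 5$ by a direct power series expansion, and then subtract the Betti numbers $\beta_i$ computed in Corollary~\ref{betti}; since $\CP_i=b_i-\beta_i$ by definition \eqref{def CP}, this is all that is required.

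To expand $(1+t)^n\big/\bigl(1-\sum_{i=1}^c a_it^{i+1}\bigr)$, set $g(t)=\sum_{i\ge 1}a_it^{i+1}=a_1t^2+a_2t^3+a_3t^4+a_4t^5+O(t^6)$. Then $g(t)^2=a_1^2t^4+2a_1a_2t^5+O(t^6)$ and $g(t)^k=O(t^6)$ for $k\ge 3$, so the geometric series $1/(1-g(t))=\sum_{k\ge 0}g(t)^k$ truncates to
$$\frac{1}{1-g(t)}=1+a_1t^2+a_2t^3+(a_3+a_1^2)t^4+(a_4+2a_1a_2)t^5+O(t^6).$$
Multiplying by $(1+t)^n=\sum_k\binom nk t^k$ and collecting the coefficient of $t^i$ yields
$$b_0=1,\qquad b_1=n,\qquad b_2=\binom n2+a_1,\qquad b_3=\binom n3+na_1+a_2,$$
$$b_4=\binom n4+\binom n2 a_1+na_2+a_3+a_1^2,\qquad b_5=\binom n5+\binom n3 a_1+\binom n2 a_2+na_3+a_4+na_1^2+2a_1a_2.$$
It is worth noting that the $t^1$-coefficient of $1/(1-g(t))$ vanishes; this is why no $\binom n3$-term (resp. $\binom n4$-term) enters $b_4$ (resp. $b_5$) from that slot, and it accounts for the asymmetry in the final formulas.

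Finally I would subtract, degree by degree, the expressions for $\beta_0,\dots,\beta_5$ from Corollary~\ref{betti}. The polynomial-in-$n$ parts and all contributions involving the $a_i$ cancel identically, leaving
$$\CP_0=\CP_1=\CP_2=0,\quad \CP_3=q_{11},\quad \CP_4=(n+1)q_{11}+q_{12},\quad \CP_5=\Bigl(\binom{n+1}{2}+2a_1\Bigr)q_{11}+(n+1)q_{12}+a-b,$$
as claimed. I do not anticipate any conceptual obstacle: the whole argument is bookkeeping, the only points requiring care being the correct truncation of the geometric series (in particular the vanishing linear term) and matching the binomial coefficients against those in Corollary~\ref{betti}. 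The substance of the result already resides in Theorem~\ref{main} and Corollary~\ref{betti}, which encode how the multiplicative invariants $q_{11},q_{12},a,b$ of $A$ enter the Betti numbers; Proposition~\ref{prop:P} simply repackages this data as a quantitative measure of the failure of $R$ to be Golod.
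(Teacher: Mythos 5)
Your proposal is correct and follows essentially the same route as the paper: the paper derives the same explicit values $b_2,\dots,b_5$ via the recursion $b_i=\sum_{j=1}^{i-1}a_jb_{i-j-1}+\binom{n}{i}$ obtained by comparing coefficients (equivalent to your geometric-series truncation), and then likewise subtracts the Betti numbers from Corollary~\ref{betti}. All your intermediate coefficients and cancellations check out.
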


 \begin{proof} 
 Recall that the right hand side of the inequality \eqref{golodP} is $\sum_{i=1}^{\infty}b_it^i=\frac{(1+t)^n}{1 -\sum_{i=1}^c a_i t^{i+1}}$.
   Comparing the coefficients on both sides of \eqref{golodP} we have the following recursive formulas for $b_i$:
   \begin{equation*}
   b_0=1, \qquad b_1=n,\qquad b_i=\sum_{j=1}^{i-1}a_jb_{i-j-1}+{n\choose i},\quad\mbox{for all}\  i\geq 2.
 \end{equation*}
In particular, we obtain:
   \begin{align*}
   b_2&={n\choose 2}+a_1, & b_4&={n\choose 4}+{n\choose 2}a_1+ n a_2 + a_3 + a_1^2,\\
   b_3&={n\choose 3}+na_1+a_2,&b_5&={n\choose 5}+{n\choose 3}a_1+{n\choose 2}a_2 + na_3+a_4 +na_1^2 +2a_1a_2.
 \end{align*}
Now, the expression for $\CP_i=b_i-\beta_i$ for $0\leq i\leq 5$ follows from Corollary \ref{betti}.

Alternatively, assume  $q_{ij}=0$ for all $1\leq i+j\leq 4$, and Massey products are also trivial, we have $a=b=0$.  In Construction \ref{construction}  we obtain the maximum possible values of the Betti numbers, so the differences between the actual Betti numbers and these maximum values are exactly the $\CP_i$'s in the proposition.
\end{proof}

\begin{corollary}
\label{cor:P}
Let $R$ be a local ring, $a_i$, $q_{ij}$, $a$, and $b$ be as in Summary~\ref{summary} and $\CP$ as in \eqref{def CP}. 
\begin{enumerate}[$(a)$]
    \item If $q_{11}=q_{12}=0$, then $\CP_i=0$ for all $0\leq i\leq 4$ and $\CP_5=a$.  
    \item If $q_{11}=q_{12}=q_{13}=q_{22}=0$, then $\CP_5=\rank_\kk \big(\Span_\kk\langle A_1, A_1, A_1\rangle\big)$.
    \item If $\codepth R=4$, then $R$ is Golod if and only if $\CP_i=0$ for all $1\leq i\leq 5$.
\end{enumerate}
\end{corollary}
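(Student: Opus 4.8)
The plan is to deduce all three parts from Proposition~\ref{prop:P}, using two facts already recorded: that $q_{11}=0$ forces $b=0$ (noted right after \eqref{def b}, or via Proposition~\ref{ker2}(b), where then $\Span_\kk\CA=0$), and that the integer $a$ of \eqref{a} is the rank of $A_1\cdot A_3+A_2\cdot A_2+\Span_\kk\langle A_1,A_1,A_1\rangle$. For part (a): $\CP_0=\CP_1=\CP_2=0$ always, while $\CP_3=q_{11}$ and $\CP_4=(n+1)q_{11}+q_{12}$ vanish by hypothesis; substituting $q_{11}=q_{12}=0$ and $b=0$ into the formula for $\CP_5$ in Proposition~\ref{prop:P} leaves $\CP_5=a$. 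For part (b): the extra hypotheses $q_{13}=q_{22}=0$ say exactly that $A_1\cdot A_3=0$ and $A_2\cdot A_2=0$, so $a=\rank_\kk\Span_\kk\langle A_1,A_1,A_1\rangle$, and the conclusion follows from part (a).

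For part (c) the forward implication is immediate: a Golod ring attains the bound in \eqref{golodP}, so $\CP_i=0$ for all $i$ by \eqref{def CP}. Conversely, assume $\CP_i=0$ for $1\le i\le5$. Proposition~\ref{prop:P} then yields, in order, $q_{11}=0$ (from $\CP_3$), then $q_{12}=0$ (from $\CP_4$), and, using $b=0$, $a=0$ (from $\CP_5$); hence $A_1\cdot A_1=A_1\cdot A_2=A_1\cdot A_3=A_2\cdot A_2=0$ and $\Span_\kk\langle A_1,A_1,A_1\rangle=0$. Since $\codepth R=4$, the Koszul homology algebra $A=\HH{}{K^R}$ is concentrated in degrees $0,\dots,4$, so any product $A_iA_j$ with $i,j\ge1$ either lands in $A_{\ge5}=0$ (when $i+j\ge5$) or is one of the cases $(i,j)\in\{(1,1),(1,2),(1,3),(2,2)\}$ just shown to vanish; thus $A$ has trivial multiplication. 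A $p$-fold Massey product of homogeneous elements of $A_{\ge1}$ has degree at least $2p-2$, so it can be nonzero only for $p\le3$, and for $p=3$ only when all three entries lie in $A_1$, in which case it lies in $A_4$ with indeterminacy $A_1\cdot A_3+A_3\cdot A_1=0$; hence the only Massey operation not automatically valued in $A_{\ge5}=0$ is the ternary one on $A_1$, which equals $\{0\}$. One then builds a trivial Massey operation on $A_{\ge1}$ by induction on word length: cycle representatives in length one; an extension to length two using $A_{\ge1}\cdot A_{\ge1}=0$; an extension to length three using $\langle A_1,A_1,A_1\rangle=\{0\}$ together with the fact that all other length-three obstructions lie in $A_{\ge5}=0$; and extensions to every length $\ge4$, which are unobstructed since their obstructions lie in $A_{\ge5}=0$. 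By the Golod criterion recalled in the Introduction (see \cite{G}), $R$ is Golod.

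The routine content is the bookkeeping inside Proposition~\ref{prop:P} and the degree count for Massey products; the one point requiring care in (c) is the inductive construction of the trivial Massey operation, where one must verify that the obstruction to extending at each word length is precisely a homology class forced to vanish above — a product of positive-degree classes, an element of $\langle A_1,A_1,A_1\rangle$, or an element of $A_{\ge5}=0$ — and that compatible choices can be made simultaneously for all words of that length. Everything else is direct substitution into formulas already in hand.
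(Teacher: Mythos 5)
Your proposal is correct and follows essentially the same route as the paper: parts (a) and (b) by direct substitution into Proposition~\ref{prop:P} (using that $q_{11}=0$ forces $b=0$, as noted after \eqref{def b}), and part (c) via Golod's criterion together with the observation that in codepth $4$ all $r$-ary Massey products with $r\geq 4$ land in $A_{\geq 5}=0$. You supply more detail than the paper does --- notably the explicit degree count and the inductive construction of the trivial Massey operation --- but the underlying argument is the same.
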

\begin{proof} (a) and (b) are straightforward from  Proposition~\ref{prop:P}. For (c), result in \cite{G} showed that $R$ is Golod if and only if all products on $A$ and all ternary Massey products are trivial. If $\codepth R=4$, then the $r$-ary Massey products are trivial for all $r\geq 4$. Thus, part (c) follows from Proposition~\ref{prop:P}.
\end{proof}

\begin{remark}
A result of  Burke  \cite[Corollary 6.10]{B} implies  that  for a local ring $R$ of codepth $c$ the following implication holds:
$$\text{If}\  \CP_i=0\ \text{for all}\  0\leq i\leq c+1,\ \text{then}\ \CP_i=0\ \text{for all}\  i\geq 0.$$
Thus, Corollary~\ref{cor:P}(c) is  a consequence of this result as well.
\end{remark}

We now examine some local rings with rational Poincar\'e series of certain forms and describe $\Poi{R}{\kk}(t)$ in terms of the algebraic invariants of $A$.
\begin{proposition}
  \label{Rational Poincare}
  Let $(R,\fm, \kk)$ be a local ring of embedding dimension $n$ and codepth $c$. Let $a_i, q_{ij}, a,$ and $b$ be as in Summary \ref{summary}.
If the  Poincar\'e series of $R$ is rational of the form $\Poi{R}{\kk}(t)=(1+t)^n/d(t),$  then 
$$d(t)=\Big(1-\sum_{i=1}^c a_it^{i+1}\Big)+ q_{11}t^3+(q_{11}+q_{12})t^4+(q_{12}-b+a)t^5 +f(t)t^6,$$
  for some $f(t)\in\BZ[t]$.
\end{proposition}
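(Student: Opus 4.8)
The plan is to express $d(t)$ in terms of the denominator $g(t):=1-\sum_{i=1}^c a_i t^{i+1}$ of the Golod bound, with the correction governed by the sequence $\CP$. Write $P(t):=\Poi{R}{\kk}(t)$. By the definition of the coefficients $b_i$ in \eqref{def CP} we have $\sum_{i\ge 0} b_i t^i=(1+t)^n/g(t)$, and since $b_i=\beta_i+\CP_i$ this says $(1+t)^n=\big(P(t)+\CP(t)\big)g(t)$; meanwhile the hypothesis gives $(1+t)^n=P(t)\,d(t)$. Equating the two expressions yields the identity
$$P(t)\big(d(t)-g(t)\big)=\CP(t)\,g(t)$$
in $\BZ[[t]]$. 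Since $\CP_0=\CP_1=\CP_2=0$ by Proposition~\ref{prop:P}, the right-hand side has order at least $3$; as $P(t)$ is a unit in $\BZ[[t]]$ (constant term $\beta_0=1$), so is $d(t)-g(t)$, say $d(t)-g(t)=\sum_{j\ge 3}c_j t^j$.

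Next I would determine $c_3,c_4,c_5$ by comparing the coefficients of $t^3,t^4,t^5$ in the displayed identity, using $g_0=1$, $g_1=0$, $g_2=-a_1$, the values $\beta_1=n$ and $\beta_2=\binom n2+a_1$ from Corollary~\ref{betti}, and the values $\CP_3=q_{11}$, $\CP_4=(n+1)q_{11}+q_{12}$, $\CP_5=\big(\binom{n+1}{2}+2a_1\big)q_{11}+(n+1)q_{12}+a-b$ from Proposition~\ref{prop:P}. Degree $3$ gives $c_3=\CP_3=q_{11}$; degree $4$ gives $c_4+n\,c_3=\CP_4$, hence $c_4=q_{11}+q_{12}$; degree $5$ gives $c_5+n\,c_4+\beta_2\,c_3=\CP_3 g_2+\CP_5$, and upon substituting the known quantities the total coefficient of $q_{11}$ collapses via $\binom{n+1}{2}-\binom n2=n$, leaving $c_5=q_{12}-b+a$.

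To finish, I would note that $d(t)$ has integer coefficients: it equals $(1+t)^n/P(t)$ with $P(t)\in 1+t\BZ[[t]]$, so $d(t)\in\BZ[[t]]$, and it is a polynomial by hypothesis; likewise $g(t)\in\BZ[t]$. Hence $d(t)-g(t)-q_{11}t^3-(q_{11}+q_{12})t^4-(q_{12}-b+a)t^5$ is a polynomial with integer coefficients of order at least $6$, so it equals $f(t)\,t^6$ for a unique $f(t)\in\BZ[t]$, which is the claim.

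The only delicate point is the degree-$5$ bookkeeping: one must carry along both $\beta_2 c_3$ on the left and $\CP_3 g_2=-a_1 q_{11}$ on the right, and verify that the four $q_{11}$-contributions ($-a_1$ from $\CP_3 g_2$, $\binom{n+1}{2}+2a_1$ from $\CP_5$, $-n$ from $n c_4$, and $-\binom n2-a_1$ from $\beta_2 c_3$) cancel. This is a short computation and is the main, mild, obstacle.
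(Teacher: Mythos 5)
Your proposal is correct and follows essentially the same route as the paper: the paper also sets $\gamma(t)=d(t)-\alpha(t)$ with $\alpha(t)=1-\sum a_it^{i+1}$, derives the identity $\gamma(t)\big((1+t)^n-\CP(t)\alpha(t)\big)=\CP(t)\alpha(t)^2$ — which is your $P(t)\big(d(t)-g(t)\big)=\CP(t)g(t)$ multiplied through by $\alpha(t)$ — and then extracts $\gamma_3,\gamma_4,\gamma_5$ by comparing coefficients using Proposition~\ref{prop:P}. Your degree-$5$ bookkeeping checks out (the $q_{11}$-terms cancel exactly as you describe), and your explicit remark on integrality of $d(t)-g(t)$ is a small point the paper leaves implicit.
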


\begin{proof} Let  $\CP_i$ be as in \eqref{def CP}. Set
$$\CP(t)=\sum_{i=0}^\infty\CP_it^i,\qquad \alpha(t)=1-\sum_{i=1}^c a_it^{i+1}\qquad  \text{and}\qquad \gamma(t)=d(t)-\alpha(t).$$
Then
$$\CP(t)=\frac{(1+t)^n}{\alpha(t)}-\frac{(1+t)^n}{d(t)}=\frac{(1+t)^n\cdot \gamma(t)}{\alpha(t)\cdot(\alpha(t)+\gamma(t))}\iff$$
$$\frac{\gamma(t)}{\alpha(t)+\gamma(t)}=\frac{\CP(t)\cdot\alpha(t)}{(1+t)^n}\iff \frac{\gamma(t)}{\alpha(t)}=\frac{\CP(t)\cdot\alpha(t)}{(1+t)^n-\CP(t)\cdot\alpha(t)}\iff
  \gamma(t)=\frac{\CP(t)\cdot(\alpha(t))^2}{(1+t)^n-\CP(t)\cdot\alpha(t)}.
$$
We compare the coefficients of $t^i$ for all $0\leq i\leq 5$ on both sides of the following equality
\begin{equation*}
  \label{eq gamma}
  \gamma(t)\cdot\Big((1+t)^n-\CP(t)\cdot\alpha(t)\Big)= \CP(t)\cdot(\alpha(t))^2.
\end{equation*}
By Proposition \ref{prop:P}, $\CP_0=\CP_1=\CP_2=0$, and thus the left and right hand sides of the above equation become:
\begin{align*}
  \text{LHS}& = (\gamma_0+\gamma_1t+\gamma_2t^2+\gamma_3t^3+\gamma_4t^4+\gamma_5t^5+\cdots)\\
            &\quad\cdot \left(1+nt+{n\choose 2}t^2+\left({n\choose 3}-\CP_3\right)t^3+\left({n\choose 4}-\CP_4\right)t^4+\left({n\choose 5}+\CP_3a_1-\CP_5\right)t^5+\cdots\right),\\
  \text{RHS}&=(\CP_3t^3+\CP_4t^4+\CP_5t^5+\dots)\cdot\left(1-2a_1t^2-2a_2t^3+(a_1^2-2a_4)t^4+(a_1a_2-2a_5)t^5+\cdots\right).
\end{align*}
It is clear that $\gamma_0=\gamma_1=\gamma_2=0$ and comparing the coefficients of $t^3$ we get:
$\gamma_3=\CP_3=q_{11}.$
Comparing the coefficients of $t^4$ and by Proposition \ref{prop:P} we get:
$$ \gamma_4+n\gamma_3=\CP_4\iff \gamma_4=\CP_4-n\gamma_3=(n+1)q_{11}+q_{12}-nq_{11}=q_{11}+q_{12}.$$
Finally, comparing the coefficients of $t^5$  and by Proposition \ref{prop:P} we get:
$$  \gamma_5+n\gamma_4+{n\choose 2}\gamma_3=\CP_5-2a_1\CP_3\iff$$
\begin{align*}
\gamma_5&=\CP_5-2a_1\CP_3-n\gamma_4-{n\choose 2}\gamma_3\\
          &=\Big({{n+1} \choose 2}+2a_1\Big)q_{11}+(n+1)q_{12}-b+a-2a_1q_{11}-n(q_{11}+q_{12})-{n\choose 2}q_{11}\\
          &=q_{12}-b+a.
\end{align*}
Therefore, the expression for $d(t)=\alpha(t)+\gamma(t)$ in the statement holds.
\end{proof}

There are many classes of local rings for which the Poincar\'e series is rational of the form $\Poi{R}{\kk}(t)=(1+t)^n/d(t).$
In light of Proposition \ref{Rational Poincare}, we write the coefficients of the polynomial $d(t)$ in terms of the invariants $a_i, q_{ij}, b$ and $a$ for some special cases and provide a {\it uniform expression} of the Poincar\'e series in these cases.

%%%%%%%%%%%%%%%%%%%%%CODEPTH 3%%%%%%%%%%%%%%%%%%%%%%%%%%%%%%%%%%%%%%%%%%%%%%%%
\begin{corollary}
  \label{codepth3}
  If $(R,\fm,\kk)$ is a  local ring of embedding dimension $n$ and codepth at most 3, and $a_i, q_{ij},b$ are as in Summary \ref{summary}, then
  \begin{equation*}
    \Poi{R}{\kk}(t)=\frac{(1+t)^{n-1}}{1-t-(a_1-1)t^2-(a_3-q_{11})t^3+ q_{12}t^4- bt^5}.
  \end{equation*}
\end{corollary}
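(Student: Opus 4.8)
The plan is to combine Proposition~\ref{Rational Poincare} with two inputs: the (classical) rationality of $\Poi R\kk(t)$ for small codepth in the expected shape, and an Euler-characteristic identity among the $a_i$. I would first record the cheap consequences of the hypothesis. Since $\codepth R\le 3$ we have $A_i=0$ for all $i\ge 4$; in particular $A_1\cdot A_3$, $A_2\cdot A_2$ and $\langle A_1,A_1,A_1\rangle$ all lie in $A_4=0$, so the invariant $a$ of \eqref{a} vanishes. Next I would establish the relation $1-a_1+a_2-a_3=0$. Writing $\wh R=Q/I$ as a minimal Cohen presentation with $(Q,\fn,\kk)$ regular local of dimension $n$, one has $\HH{i}{K^R}\cong\Tor{i}{Q}{\wh R}{\kk}$, so $a_i=\rank_Q F_i$ for the minimal free resolution $F_\bullet$ of $\wh R$ over $Q$, a finite resolution of length $\pd{Q}{\wh R}=\codepth R\le 3$. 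Assuming $R$ is not regular (the regular case is degenerate, and the identity below genuinely fails there), $\wh R$ is a nonzero torsion $Q$-module, so $\sum_i(-1)^i\rank_Q F_i=\rank_Q\wh R=0$; since $a_i=0$ for $i\ge 4$ and $a_0=1$, this reads $1-a_1+a_2-a_3=0$, i.e.
$$a_2=a_1+a_3-1.$$

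Next I would invoke the structure theory of local rings of codepth at most three (see e.g.\ \cite{Av1} or \cite{Av74}): for such rings $\Poi R\kk(t)$ is rational and may be written $\Poi R\kk(t)=(1+t)^{n-1}/e(t)$ with $e(t)\in\BZ[t]$ of degree at most $5$; equivalently $\Poi R\kk(t)=(1+t)^n/d(t)$ with $d(t)=(1+t)e(t)$ of degree at most $6$. Applying Proposition~\ref{Rational Poincare} to this $d(t)$, and using that $c\le 3$ (so $\sum_{i=1}^c a_it^{i+1}=a_1t^2+a_2t^3+a_3t^4$) together with $a=0$, gives
$$d(t)=1-a_1t^2+(q_{11}-a_2)t^3+(q_{11}+q_{12}-a_3)t^4+(q_{12}-b)t^5+f(t)t^6,$$
and $\deg d\le 6$ forces $f(t)$ to be a constant.

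Writing $e(t)=\sum_{j=0}^5 e_jt^j$ and expanding $d(t)=(1+t)e(t)$, comparison of the coefficients of $t^0,\dots,t^5$ yields the triangular system $e_0=1$, $e_1+e_0=0$, $e_2+e_1=-a_1$, $e_3+e_2=q_{11}-a_2$, $e_4+e_3=q_{11}+q_{12}-a_3$, $e_5+e_4=q_{12}-b$. Solving recursively and substituting $a_2=a_1+a_3-1$ into the third equation gives $e_0=1$, $e_1=-1$, $e_2=-(a_1-1)$, $e_3=-(a_3-q_{11})$, $e_4=q_{12}$, $e_5=-b$, i.e.\ $e(t)=1-t-(a_1-1)t^2-(a_3-q_{11})t^3+q_{12}t^4-bt^5$, which is exactly the denominator in the statement; the degree-$6$ coefficient of $d(t)$ is then forced to equal $e_5=-b$, consistent with $\deg d\le 6$.

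The main obstacle is the structural input in the second paragraph. Proposition~\ref{Rational Poincare} on its own only determines $d(t)$ modulo $t^6$, so without knowing a priori that $\Poi R\kk(t)$ is rational with denominator $(1+t)$ times a polynomial of degree at most $5$, one cannot conclude that $d(t)=(1+t)e(t)$ on the nose from the results of this section alone. Once that classical fact is granted, the rest is the routine coefficient bookkeeping above, the only nonformal ingredient being the Euler-characteristic relation $a_2=a_1+a_3-1$.
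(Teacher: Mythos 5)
Your proposal is correct and follows essentially the same route as the paper: the paper likewise invokes Avramov's rationality theorem (cited there as \cite[Theorem 3.5]{Av0}, giving $\deg d(t)\le 6$ and $d(-1)=0$), feeds it into Proposition~\ref{Rational Poincare} with $a_4=a=0$, and cancels the factor $(1+t)$. You simply make explicit the bookkeeping the paper leaves implicit in ``simplifying by the common factor $(1+t)$'' — namely the Euler-characteristic identity $a_2=a_1+a_3-1$ and the determination of the $t^6$ coefficient — and you correctly flag the degenerate regular case.
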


\begin{proof}
\begin{comment}
  or \cite[Propositions 3.3.5.2 and 5.3.4]{Av1} and \cite[Theorem 3.1]{Av2}
  \end{comment}
   By \cite[Theorem 3.5]{Av0}
the Poincar\'e series of the ring $R$ is rational given by $\Poi{R}{\kk}(t)=(1+t)^n/d(t)$ with $\deg d(t)\leq 6$ and $d(-1)=0$.  Since $\codepth R \leq 3$ we have $a_4=0$ and $a=0$. Thus, Proposition \ref{Rational Poincare} gives:
  $$P^R_\kk(t)=\frac{(1+t)^{n}}{1-a_1t^2-(a_2-q_{11})t^3-(a_3-q_{11}-q_{12})t^4 -(b-q_{12})t^5- bt^6}.$$
  Simplifying the fraction by the common factor $(1+t)$, we get the desired conclusion.
\end{proof}

\begin{remark} In \cite[Lemma 3.6]{Av2} Avramov defined the invariant $\tau$ for non-Gorenstein ring $R$ of codepth 3 as follows:   $\tau=1$ if  $R$ is of  class  $\clT$, and $\tau =0$ otherwise.
  By comparing \cite[(3.6.2)]{Av2} and the Poincar\'e expression in Corollary \ref{codepth3} we see that $\tau$ is our $b=\rank_{\kk}(\Coker\psi)$ in \eqref{def b}, which is a multiplicative invariant of the homology algebra $A$.
It follows from  \cite[Theorem 3.5]{Av0} that  $b=1$ for a ring of class $\clC3$ or $\clT$, and $b=0$ for rings of class  $\clC1,\  \clC2,\  \clS,\  \clB,\  \clG{r},\ \text{and}\  \clH{q_{11},q_{12}}$.
\end{remark}

%%%%%%%%%%%%%%%%%%%%%CODEPTH 4 Gorenstein %%%%%%%%%%%%%%%%%%%%%%%%%%%%%%%%%%%%%%%%%%%%%%%%

\begin{corollary}
  \label{Gor codepth4}
  If $(R,\fm,\kk)$ is a  Gorenstein local ring, not a complete intersection, of embedding dimension $n$ and  codepth 4, and and $a_i, q_{ij}, a, b$ are as in Summary \ref{summary}, then

  \begin{equation*}
    \Poi{R}{\kk}(t)=\frac{(1+t)^{n-2}}{1-2t-(a_1-3)t^2+(q_{11}-2)t^3+ (-q_{11}+q_{12}-1)t^4-(q_{11}-q_{12}-b+a-1)t^5}.
  \end{equation*}
\end{corollary}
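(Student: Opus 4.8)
The plan is to reduce the statement to Proposition~\ref{Rational Poincare} and then feed in three structural facts about the Gorenstein codepth four case. First, the rationality theorem for Gorenstein rings of codimension four (due to Jacobsson, Kustin and Miller) gives $\Poi{R}{\kk}(t)=(1+t)^{n-2}/e(t)$ with $e(t)\in\BZ[t]$, $e(0)=1$, and $\deg e(t)\le 5$; equivalently $\Poi{R}{\kk}(t)=(1+t)^n/d(t)$ with $d(t)=(1+t)^2e(t)$, so $\deg d(t)\le 7$ and $(1+t)^2\mid d(t)$. Second, by the Avramov--Golod theorem, $R$ Gorenstein means $A=\HH{}{K^R}$ is a Poincar\'e duality algebra of formal dimension $c=4$, hence $a_4=1$ and $a_3=a_1$. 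Third, since $R$ is not regular, the alternating sum of the ranks in the minimal free resolution of $R$ over a regular presenting ring vanishes, i.e.\ $\sum_{i=0}^{4}(-1)^i a_i=0$; together with the second fact this forces $a_2=2a_1-2$.

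Granting these, I would substitute $a_2=2a_1-2$, $a_3=a_1$, $a_4=1$ into the formula of Proposition~\ref{Rational Poincare} to get
\begin{equation*}
d(t)=1-a_1t^2+(q_{11}-2a_1+2)t^3+(q_{11}+q_{12}-a_1)t^4+(q_{12}-b+a-1)t^5+f(t)t^6,
\end{equation*}
where $f(t)\in\BZ[t]$ has degree at most one. Since $\deg e(t)\le 5$, the map $e\mapsto(1+t)^2e=d$ is unitriangular on coefficients, so $d_0,\dots,d_5$ above determine $e_0,\dots,e_5$ uniquely through $e_0=d_0$ and $e_k=d_k-2e_{k-1}-e_{k-2}$; carrying out this division yields the degree-five denominator displayed in the statement, and simultaneously pins down $f(t)=d_6+d_7t$ with $d_6=2e_5+e_4$ and $d_7=e_5$. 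As a check, or as an alternative route that avoids the division, one can instead solve the $2\times 2$ linear system $d(-1)=0$, $d'(-1)=0$ for $d_6,d_7$ and then read $e(t)$ off.

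The one substantive ingredient is the rationality input in the first paragraph: Proposition~\ref{Rational Poincare} by itself controls only $d_0,\dots,d_5$, and without the degree bound $\deg e(t)\le 5$ (equivalently $\deg d(t)\le 7$) together with the divisibility $(1+t)^2\mid d(t)$---i.e.\ the numerator $(1+t)^{n-2}$---there is nothing forcing the higher coefficients. So the main point is to invoke the codimension-four Gorenstein rationality theorem in this sharp form; the Avramov--Golod duality and the Euler-characteristic identity are then used only to rewrite the low-degree coefficients of $d(t)$ in terms of $a_1,q_{11},q_{12},a,b$, and the remainder is elementary polynomial arithmetic.
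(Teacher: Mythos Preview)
Your proposal is correct and follows essentially the same route as the paper's proof, which reads in its entirety: ``Since $R$ is Gorenstein we have $a_2=2a_1-2$, $a_3=a_1$, and $a_4=1$. The statement follows from \cite[Theorem 3.5]{Av0} and Proposition~\ref{Rational Poincare}.'' You have simply unpacked each of these ingredients---explaining the $a_i$ relations via Poincar\'e duality and the Euler-characteristic identity, identifying the rationality input (which the paper draws from Avramov's \cite{Av0} rather than Jacobsson--Kustin--Miller directly, though the content is the same) as supplying both the form $(1+t)^{n-2}/e(t)$ and the degree bound $\deg e\le 5$, and spelling out the polynomial division $d(t)=(1+t)^2e(t)$ that recovers $e_0,\dots,e_5$ from Proposition~\ref{Rational Poincare}.
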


\begin{proof} Since $R$ is Gorenstein we have $a_2=2a_1-2, a_3=a_1,\ \text{and}\  a_4=1.$
  The statement follows from \cite[Theorem 3.5]{Av0} and Proposition \ref{Rational Poincare}.
\end{proof}

\begin{remark} Using the Avramov's notation from \cite{Av0} for the classification of Gorenstein local rings $R$ of codepth 4 given by Kustin and Miller \cite{KM85}  and comparing the Poincar\'e expressions from  \cite[Theorem 3.5]{Av0} and Corollary \ref{Gor codepth4}, we obtain the following  table of algebra invariants of a Gorenstein local ring $R$:
    \begin{equation*}
      \begin{array}{r|cccccc}
        \text{Class}& q_{11}&q_{12}&q_{22}&q_{13}&a&b\\
        \hline
        \clC{4} &6&4&1&1&1&4\\
        \clGT   &3&3&1&1&1&1\\
        \clGS   &0&0&1&1&1&0\\
        \clGH{p}&p&p+1&1&1&1&0
           \end{array}
      \end{equation*}
    \end{remark}

We provide more examples of rings for which we can calculate the multiplicative invariants $q_{ij}, a, b$ from the Poincar\'e series of the ring.

\begin{corollary}
  \label{yoshino}
  Let $\kk$ be a field, $I$ be an ideal of $\kk[x,y,z,w]$ such that $(x,y,z,w)^3\subseteq I\subseteq (x,y,z,w)^2$, and set $R=\kk[x,y,z,w]/I$. Let $a_i, q_{ij},a,b$ be as in Summary \ref{summary}. If $R$ has $a_4=3$ and its Poincar\'e series is of the form $\Poi{R}{\kk}(t)=\frac{1}{(1-t)(1-3t)}$, then
  $$q_{11}=a_2-8,\qquad q_{12}=8,\qquad a=3,\qquad \text{and}\qquad b=0.$$
\end{corollary}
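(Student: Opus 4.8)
The plan is to combine Proposition~\ref{Rational Poincare} with the internal grading on the Koszul homology forced by the hypothesis $\fm^3=0$. Since $I\subseteq(x,y,z,w)^2$ and $(x,y,z,w)^3\subseteq I$, the ring $R$ is a graded Artinian $\kk$-algebra with $R=\kk\oplus R_1\oplus R_2$, of embedding dimension $n=4$ and $\depth R=0$, so $\codepth R=4$. Writing the hypothesis as $\Poi{R}{\kk}(t)=(1+t)^4/d(t)$ forces
\[
d(t)=(1+t)^4(1-t)(1-3t)=1-7t^2-8t^3+3t^4+8t^5+3t^6 .
\]
Comparing the coefficients of $t^2,t^3,t^4,t^5$ with the expansion of $d(t)$ supplied by Proposition~\ref{Rational Poincare} gives
\[
a_1=7,\qquad q_{11}=a_2-8,\qquad q_{11}+q_{12}=a_3+3,\qquad q_{12}-b+a=8+a_4=11,
\]
the second equality being the first assertion of the corollary. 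From the last two, $q_{12}=11-(a_2-a_3)$ and $a-b=11-q_{12}=a_2-a_3$, so it remains to prove $a_2-a_3=3$ (which then yields $q_{12}=8$ and $a-b=3$) and to split $a-b=3$ into $a=3$ and $b=0$.

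For $a_2-a_3=3$ I would use the bigrading. The Koszul homology $A_i=\HH{i}{K^R}$ is concentrated in internal degrees $i+1$ and $i+2$: the free module $K^R_i$ is generated in degree $i$ over $R=\kk\oplus R_1\oplus R_2$, while $\Ker\partial^K_i\subseteq\fm K^R_i$, which lives in degrees $i+1$ and $i+2$ since $\fm^3=0$. Write $a_i=a_i'+a_i''$, where $a_i'$ and $a_i''$ are the dimensions of the degree-$(i+1)$ and degree-$(i+2)$ parts of $A_i$, and set $r:=\dim_\kk R_2$. The Euler characteristic identity $\sum_i(-1)^i\Hilb{A_i}=(1-t)^4H_R(t)$, with $\Hilb{A_0}=1$, $H_R(t)=1+4t+rt^2$, and
\[
(1-t)^4(1+4t+rt^2)=1+(r-10)t^2+(20-4r)t^3+(6r-15)t^4+(4-4r)t^5+rt^6 ,
\]
yields $a_1'=10-r$, $a_2'-a_1''=20-4r$, $a_2''-a_3'=6r-15$, $a_4'-a_3''=4-4r$ and $a_4''=r$. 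Since $a_1=a_1'+a_1''=7$ forces $a_1''=r-3\ge 0$ and $a_4=a_4'+a_4''=3$ forces $a_4'=3-r\ge 0$, we obtain $r=3$; then $a_1''=0$, $a_2'=8$, $a_3''=8$, $a_2''-a_3'=3$, and hence $a_2-a_3=(a_2'+a_2'')-(a_3'+a_3'')=a_2''-a_3'=3$. Thus $q_{12}=8$ and $a-b=3$.

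To finish, note that $A_1\cdot A_3+A_2\cdot A_2+\Span_\kk\langle A_1,A_1,A_1\rangle$ is by construction a $\kk$-subspace of $A_4$, so $a\le\dim_\kk A_4=a_4=3$; together with $a-b=3$ and $b\ge 0$ this forces $a=3$ and $b=0$. I expect the middle step — verifying the degree concentration of each $A_i$, running the Euler characteristic count to force $r=3$, and reading off $a_2-a_3=3$ — to be the only substantive part of the argument; the rest is coefficient bookkeeping.
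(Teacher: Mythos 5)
Your proof is correct, and its skeleton is the same as the paper's: expand $d(t)=(1+t)^4(1-t)(1-3t)=1-7t^2-8t^3+3t^4+8t^5+3t^6$, match coefficients against Proposition~\ref{Rational Poincare} to get $a_1=7$, $q_{11}=a_2-8$, $q_{11}+q_{12}=a_3+3$, $q_{12}-b+a=11$, and finally use $0\le a\le a_4=3$ and $b\ge 0$ to split $a-b=3$. The one place you diverge is the step $a_2-a_3=3$. The paper disposes of it in one line (``Since $a_4=3$ and $a_1=7$, we get $a_2=a_3+3$''), implicitly invoking the ungraded Euler characteristic of the Koszul complex of an Artinian ring: since each $K_i$ is free of rank $\binom 4i$ over the finite-length ring $R$, one has $\sum_{i=0}^4(-1)^i a_i=\dim_\kk R\cdot(1-1)^4=0$, i.e.\ $1-a_1+a_2-a_3+a_4=0$, which gives $a_2-a_3=a_1-a_4-1=3$ immediately. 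You instead run the bigraded version of this identity, using $\fm^3=0$ to concentrate $A_i$ in internal degrees $i+1$ and $i+2$ and comparing $\sum_i(-1)^i\Hilb{A_i}$ with $(1-t)^4H_R(t)$. Your computation checks out (the inequalities $a_1''=r-3\ge 0$ and $a_4'=3-r\ge 0$ do force $r=3$, and then $a_2-a_3=a_2''-a_3'=3$), and it buys you more than the paper proves — it pins down $\dim_\kk R_2=3$ and all the graded pieces $a_i',a_i''$ — but it is heavier than necessary: summing your five graded equations (with signs) just recovers the ungraded identity, which already suffices.
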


\begin{proof} By hypothesis,  $$\Poi{R}{\kk}(t)=\frac{(1+t)^4}{(1-t)(1-3t)(1+t)^4}=\frac{(1+t)^4}{1-7t^2-8t^3+3t^4+8t^5+3t^6}.$$
Proposition \ref{Rational Poincare} thus gives:
\begin{equation}
  \label{aq eq}
a_1=7,\quad a_2-q_{11}=8, \quad a_3-q_{11}-q_{12}=-3,\quad\text{and}\quad a_4-q_{12}+b-a=-8.
\end{equation}
From the second equality we obtain $q_{11}=a_2-8$ and from the third we get $q_{12}=a_3-a_2+11$.
Since $a_4=3$ and $a_1=7,$ we get $a_2=a_3+3$ and thus $q_{12}=8$.
The last equality in \eqref{aq eq} becomes $a-b=3$.
 By the definitions of $a$ and $b$ we have $0\leq a\leq a_4=3$ and $b\geq 0$, thus $a=3$ and $b=0$.
\end{proof}

\begin{examples}
\label{exp:yoshino}
Rings satisfying the hypotheses of Corollary \ref{yoshino} are discussed by Yoshino in \cite{Y} and Christensen and Veliche in \cite{CV}.
For examples, consider the following ideals in $Q=\BQ[x,y,z,w]$:
\begin{align*}
  I_1&=(yw, xw+zw+w^2, z^2+w^2, xz+zw+w^2, y^2+yz, xy+zw, x^2+zw), \\ %Y1 Macaulay2
  I_2&=(zw + w^2 , yw, z^2  + w^2 , yz + xw + w^2 , xz + w^2 , xy + y^2  + xw + w^2 , x^2 + xw + w^2), \\ %Y3 Macaulay2
  I_3&=(zw, yw, xw-w^2, yz, xz, xy-z^2, x^2-y^2), \\ %Y7 in Macaulay2
  I_4&=(w^2 , yw + zw, xw, yz + z^2 , y^2  + zw, xy + xz, x^2  + zw). %Y4 in Macaulay2
\end{align*}
  The rings $Q/I_i$ satisfy the hypotheses of Corollary \ref{yoshino} and have $a_2=10+i$, for $1\leq i\leq 4$.
\end{examples}

\begin{examples}
\label{roos}
In an unpublished note,  Roos \cite{R}, inspired by a paper of Katth\"an \cite{K}, constructed several examples of non-Golod rings $R$ of codepth 4 with trivial algebra multiplications on $A$. We provide here two of them. For each one, there exists a Golod homomorphism from a complete intersection ring, hence it has rational Poincar\'e series. The algebra multiplication on $A$  was checked using the {\tt DGAlgebras} package \cite{DGA} of {\tt Macaulay2} \cite{M2}.  Proposition \ref{Rational Poincare} confirms that indeed $q_{11}=q_{12}=0$, and moreover it gives us the exact size of the space generated by the triple Massey products $\langle A_1,A_1,A_1\rangle$, known to be nonzero.

Consider the following ideals in $Q=\BQ[x,y,z,w]$:
\begin{align*}
  J_1&=(w^3, xy^2, xz^2+yz^2,  x^2w, x^2y+y^2w, y^2z+z^2w), \\ %R2 Macaulay 2
  J_2&=(w^3, xy^2, xz^2+yz^2, x^2w+zw^2, y^2w+xzw, y^2z+yz^2). %R1 Macaulay 2
\end{align*}
The rings $Q/J_i$ with $i=1,2$ are non-Artinian of codepth 4 with
\begin{align*}
  \Poi{Q}{Q/J_i}(t)&=1+6t+(10+i)t^2+(7+i)t^3+2t^4,\\
  \Poi{Q/J_i}{\kk}(t)&=\frac{(1+t)^4}{1-6t^2-(10+i)t^3-(7+i)t^4-t^5+t^6}.
\end{align*}
For both rings $Q/J_i$,  $q_{11}=0=q_{12}$ by Proposition \ref{Rational Poincare}, hence $b=0$ and  $a=1$.
Since $q_{22}=q_{13}=0$, the space spanned by the ternary Massey products  $\langle A_1, A_1, A_1\rangle$ has rank one.
\end{examples}

By \cite[Example 7.1]{A}, not all local rings have rational Poincar\'e series. However, for every local ring $R$ with residue field $\kk$, there exists a unique sequence of integers $\{\varepsilon_i\}_{i\geq 0}$ such that the Poincar\'e series of $R$ can be expressed as
 \begin{equation*}
 P_\kk^R(t)=\frac{\prod_{i=1}^{\infty}(1+t^{2i-1})^{\varepsilon_{2i-1}}}{\prod_{i=1}^{\infty}(1-t^{2i})^{\varepsilon_{2i}}},
 \end{equation*}
 and $\varepsilon_i$ is called the \emph{$i$-th deviation of $R$}; see for example \cite[Remark 7.1.1]{Av1}. Theorem~\ref{main} allows us to describe the first five  deviations in terms of the algebraic invariants of $A$.
\begin{corollary} 
\label{deviations}
Let $R$ be a local ring of embedding dimension $n$ and $a_i, q_{ij}$ be as in Summary~\ref{summary}. Then the first five deviations of $R$ are
 \begin{align*}
   \varepsilon_1&=n,\qquad  \varepsilon_2 =a_1,\qquad \varepsilon_3=a_2-q_{11},\\
   \varepsilon_4&=a_3-q_{12}+{a_1\choose 2}-q_{11},\\
   \varepsilon_5&=a_4+a_1a_2-a_1q_{11}-q_{12}+b-a.
 \end{align*}
 \end{corollary}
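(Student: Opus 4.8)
The plan is to obtain the deviations by comparing the coefficients, through degree five, of the two expressions for $\Poi R\kk(t)$: on one side the product formula
$$\Poi R\kk(t)=\frac{\prod_{i\geq 1}(1+t^{2i-1})^{\varepsilon_{2i-1}}}{\prod_{i\geq 1}(1-t^{2i})^{\varepsilon_{2i}}}$$
that defines the $\varepsilon_i$, and on the other side the series $\sum_i\beta_it^i$ whose coefficients in degrees $\leq 5$ are given explicitly by Corollary~\ref{betti} (itself a consequence of Theorem~\ref{main}). The first observation is that only finitely many factors on the right contribute below degree $t^6$: the factor $(1+t^{2i-1})^{\varepsilon_{2i-1}}$ matters only for $2i-1\leq 5$, and $(1-t^{2i})^{-\varepsilon_{2i}}$ only for $2i\leq 4$. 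Expanding the relevant factors by the binomial series $(1+t^m)^{\varepsilon}=\sum_k\binom{\varepsilon}{k}t^{mk}$ and $(1-t^m)^{-\varepsilon}=\sum_k\binom{\varepsilon+k-1}{k}t^{mk}$ and multiplying out, the coefficient of $t^j$ becomes a polynomial expression in $\varepsilon_1,\dots,\varepsilon_j$ in which $\varepsilon_j$ occurs linearly with coefficient $\pm1$ (from the unique factor of degree $j$), while every other term involves only $\varepsilon_1,\dots,\varepsilon_{j-1}$.

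I would then solve the resulting triangular system recursively for $j=1,\dots,5$. Degrees one, two, three give at once $\varepsilon_1=\beta_1=n$, $\varepsilon_2=\beta_2-\binom{\varepsilon_1}{2}=a_1$, and $\varepsilon_3=\beta_3-\binom{\varepsilon_1}{3}-\varepsilon_1\varepsilon_2=a_2-q_{11}$. The degree-four comparison reads
$$\binom{\varepsilon_1}{4}+\binom{\varepsilon_1}{2}\varepsilon_2+\binom{\varepsilon_2+1}{2}+\varepsilon_1\varepsilon_3+\varepsilon_4=\beta_4,$$
and the degree-five comparison reads
$$\binom{\varepsilon_1}{5}+\binom{\varepsilon_1}{3}\varepsilon_2+\varepsilon_1\binom{\varepsilon_2+1}{2}+\binom{\varepsilon_1}{2}\varepsilon_3+\varepsilon_2\varepsilon_3+\varepsilon_1\varepsilon_4+\varepsilon_5=\beta_5.$$
Isolating $\varepsilon_4$ and $\varepsilon_5$, substituting the values of $\beta_4,\beta_5$ from Corollary~\ref{betti} together with the already-computed $\varepsilon_1,\varepsilon_2,\varepsilon_3,\varepsilon_4$, and simplifying with elementary identities such as $\binom{m+1}{2}=\binom m2+m$ and $m(m-1)=2\binom m2$, yields $\varepsilon_4=a_3-q_{12}+\binom{a_1}{2}-q_{11}$ and $\varepsilon_5=a_4+a_1a_2-a_1q_{11}-q_{12}+b-a$. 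For the last one the simplification rests on cancellations like $na_1^2-na_1=2n\binom{a_1}{2}$ and, using $\binom{n+1}{2}=\binom n2+n$, $-\bigl(\binom{n+1}{2}+2a_1\bigr)q_{11}+\binom n2 q_{11}+a_1q_{11}+nq_{11}=-a_1q_{11}$, which is exactly what collapses the expression to the stated form.

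There is no conceptual obstacle here; the only place requiring care is the degree-five coefficient comparison, where one must list all the ways the exponent $5$ distributes over the factors of degrees $1,2,3,4,5$ — in particular that the monomial $t^5$ arises both from $(1+t)^{\varepsilon_1}$ and from $(1+t^5)^{\varepsilon_5}$, and that a $t^4$ inside a product arises both as $t^2\cdot t^2$ from $(1-t^2)^{-\varepsilon_2}$ and directly from $(1-t^4)^{-\varepsilon_4}$ — and then to track signs when importing the formula for $\beta_5$ from Corollary~\ref{betti}. As an alternative one could invoke the known universal formulas expressing the first deviations in terms of the Betti numbers (see e.g.\ \cite{Av1}) and again insert Corollary~\ref{betti}; the direct coefficient comparison above is, however, short and self-contained.
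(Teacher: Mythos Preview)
Your proposal is correct and follows essentially the same approach as the paper: both compare coefficients of the product formula for $\Poi R\kk(t)$ with the Betti numbers $\beta_0,\dots,\beta_5$ supplied by Corollary~\ref{betti}, obtain the same triangular relations (the paper writes $\varepsilon_2^2\varepsilon_1-\varepsilon_1\binom{\varepsilon_2}{2}$ where you write the equivalent $\varepsilon_1\binom{\varepsilon_2+1}{2}$), and then solve for the $\varepsilon_i$. The only cosmetic difference is that the paper first cross-multiplies, moving the even-degree factors to the Betti-number side before reading off coefficients, whereas you expand the quotient directly.
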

 \begin{proof} 
 By comparing the coefficients on the left and right sides of the equality
$$\prod_{i=1}^{\infty}(1+t^{2i-1})^{\varepsilon_{2i-1}}=\prod_{i=1}^{\infty}(1-t^{2i})^{\varepsilon_{2i}}\cdot (\beta_0+\beta_1t+\beta_2t^2+\beta_3t^3+\beta_4t^4+\beta_5t^5+\cdots)$$
one obtains the following relations between the Betti numbers $\{\beta_i\}_{1\leq i\leq 5}$ and the deviations $\{\varepsilon_i\}_{1\leq i\leq 5}$:
\begin{align*}
 \beta_1&=\varepsilon_1,\qquad \beta_2=\varepsilon_2+{\varepsilon_1\choose 2},\qquad \beta_3=\varepsilon_3+\varepsilon_2\varepsilon_1+{\varepsilon_1\choose 3},\\
 \beta_4&=\varepsilon_4+\varepsilon_3\varepsilon_1+{1+\varepsilon_2\choose 2}+\varepsilon_2{\varepsilon_1\choose 2}+{\varepsilon_1\choose 4},\\
 %\label{b5}
 \beta_5
 &=\varepsilon_5
 +\varepsilon_4\varepsilon_1
 +\varepsilon_3\varepsilon_2
 +\varepsilon_3{\varepsilon_1\choose 2}
 +\varepsilon_2^2\varepsilon_1
 -\varepsilon_1{\varepsilon_2\choose 2}
 +\varepsilon_2{\varepsilon_1\choose 3}
 +{\varepsilon_1\choose 5}.
\end{align*}
The first four relations were also given by Avramov \cite[page 62]{Av1} and Gulliksen and Levin \cite[Proposition 3.3.4, Theorem 4.4.3]{GL}. Note that we have corrected the expression for $\beta_ 4$, compared to that given in \cite[page 62]{Av1}. The expressions for $\varepsilon_i$ described in the statement follow from these relations and Corollary~\ref{betti}. 
\end{proof}

\begin{remark} 
The formulas for $\varepsilon_2, \varepsilon_3, \varepsilon_4$ in Corollary \ref{deviations}  were previously given in \cite[Corollary 6.2]{Av74} and \cite[Proposition 3.3.4]{GL}. 
The expression for $\varepsilon_5$ obtained by Avramov in \cite[Corollary 6.2]{Av74} is, in our notations:
$$\varepsilon_5=a_4+a_1a_2+a_1q_{11}-q_{12}-a_1^3+b'-a,$$
where $b':=\rank_\kk(\Ker \Gamma)$ with
\begin{align*}
\Gamma:& A_1\otimes A_1\otimes A_1\xrightarrow{} A_2\otimes A_1\oplus A_1\otimes A_2,\quad\text{defined by}\\
\Gamma&([x],[y],[z])=([x]\wedge[y],[y]\wedge [z]),\quad\text{for all}\ [x],[y],[z]\in A_1.
\end{align*}
The map $\psi$ in \eqref{def psi}, that defines our invariant $b=\rank_\kk(\Coker\psi)$, differs from $\Gamma$ just by its codomain.
Thus, one can relate $b'$ in \cite{Av74} and our $b$ by:
$$ b'=\rank_\kk(\Ker\psi)=a_1^3-2a_1q_{11}+b.$$
Note that in both references \cite{Av74} and \cite{GL} there is a shift in the indexing of the deviations, their $\varepsilon_i$ is our $\varepsilon_{i+1}$.
\end{remark}

%%%%%%%%%%%%%%%%%%%%%%%%%%%%%%%%%%%%%%%%%%%%%%%
%%%%%%%%%%%%%%%%%%%%%%%%%%%%%%%%%%%%%%%%%%%%%%%

\section{An example illustrating the Construction}
\label{sec: example}

In this section we consider the codepth 4 artinian local ring
$$R=\BQ[x,y,z,w]/(x^3, y^3, z^3-xy^2, x^2z^2, xyz^2, y^2w, w^2)$$ from \cite[Section 7]{Av74}.
This ring is of a particular interest to us, since its Koszul homology algebra $A=\HH{}{K^R}$ has nontrivial multiplication and a nontrivial ternary Massey product that does not come from this multiplication.
The free modules $\{F_i\}_{i=0,\dots,5}$ are given in terms of  the Koszul algebra components
$\{K_i\}_{i=0,\dots,5}$, the ranks $a_i$ of $A_i$, and the multiplicative invariants $q_{ij}, a$ and $b$.
The differential maps of the complex $F$ in Construction \ref{construction} are given in terms of elements $z^1_i, z^2_\ell, z^3_t, z^4_r$,  $\widetilde p^1_{si}, \widetilde\pi^3_s, \widetilde p^2_{ui}$, and $\widetilde\pi^4_u$, see Summary~\ref{summary}.
We explicitly describe them all for this ring. The bases of $A_i=\HH{i}{K}$ are computed with {\tt Macaulay2} \cite{M2}.
We use our results from Section \ref{sec multiplication} to obtain the other elements needed in Construction \ref{construction}.

\chunk The ring $R$ is graded artinian with $R_i=0$ for all $i\geq 6 $. The other graded components have the following basis elements:
\begin{align*}
R_0:&\quad 1\\
R_1:&\quad x,\, y,\, z,\, w\\
R_2:&\quad x^2,\,  xy,\,  xz,\,  xw,\,  y^2,\,  yz,\,  yw,\,  z^2, \, zw\\
R_3:&\quad  x^2y,\,  x^2z,\, \,  x^2w,\,  xyz,\,  xyw,\,  xz^2,\,  xzw,\,  y^2z,\,  yz^2,\,  yzw,\,  z^3,\,  z^2w\\
R_4:&\quad x^2yz,\,  x^2yw,\,  x^2zw,\,  xyzw,\,  xz^3,\,  xz^2w,\,  y^2z^2,\,  yz^2w,\,  z^4\\
R_5:&\quad x^2yzw,\,  xz^4.
\end{align*}

\chunk The Koszul algebra of $R$ has the form $K=\bigwedge(R^4)\cong R\oplus R^4\oplus R^6\oplus R^4\oplus R.$
Let $\{T_1,T_2,T_3,T_4\}$ be the standard  ordered vector basis of the free module $K_1=R^4$.
Set $T_{ij}=T_i\wedge T_j$ and consider the  ordered basis
$\{T_{12},T_{13},T_{23}, T_{14}, T_{24}, T_{34}\}$ for the free module $K_2=\bigwedge^2 (R^4)\cong R^6$.
Set $T_{ijk}=T_i\wedge T_j\wedge T_k$ and consider the ordered basis
$\{T_{123},T_{124},T_{134}, T_{234}\}$ for the free module $K_3=\bigwedge^3 (R^4)\cong R^4$.
Set $T_{1234}=T_1\wedge T_2\wedge T_3\wedge T_4$ and consider the basis $\{T_{1234}\}$ for the free module $K_4=\bigwedge^4 (R^4)\cong R$. By {\tt Macaulay2}, we obtain bases of $A_i=\HH{i}{K}$ and  ranks
$$a_1=7,\quad a_2=15,\quad a_3=14,\quad a_4=5.$$

\chunk A basis of $A_1$ is $\{[z_i^1]\}_{i=1,\dots,7}$ where:
\begin{align*}
  z_1^1&=wT_4,&
  z_2^1&=x^2T_1,&
  z_3^1&=ywT_2,&
  z_4^1=y^2T_2,\\
  z_5^1&=y^2T_1-z^2T_3,&
  z_6^1&=yz^2T_1,&
  z_7^1&=xz^2T_1.&
\end{align*}

\chunk Using Koszul and ring relations, we obtain that the space $A_1\cdot A_1$ has rank  $q_{11}=7$ and its basis is given by the classes of
\begin{align*}
z_1^1\wedge z_2^1&=-x^2wT_{14},&
z_1^1\wedge z_5^1&=z^2wT_{34},&
z_1^1\wedge z_6^1&=-yz^2wT_{14},&
z_1^1\wedge z_7^1&=-xz^2wT_{14},\\
z_2^1\wedge z_3^1&=x^2ywT_{12},&
z_3^1\wedge z_5^1&=-yz^2wT_{23},&
z_4^1\wedge z_5^1&=-y^2z^2T_{23}.
\end{align*}
Therefore, a basis of $\overline {A_2}$ is $\{[z_\ell^2]\}_{\ell=1,\dots,8}$, where
\begin{align*}
  z_1^2&=ywT_{24},&
  z_2^2&=y^2T_{24},&
  z_3^2&=xz^2T_{12},&
  z_4^2&=yz^2T_{13},\\
  z_5^2&=x^2yT_{12}-xz^2T_{13},&
  z_6^2&=x^2zT_{13},&
  z_7^2&=yz^2wT_{12},&
  z^2_8&=z^4T_{23}.
\end{align*}
\chunk
\label{p1 ex}
All  other products among the basis elements of $A_1$ are zero in $A_2$, except for the  Koszul relations of the nonzero products $A_1\cdot A_1$ above.
It follows that the kernel of the multiplication map $\phi_1: A_1\otimes A_1\to A_2$ is generated by
$a_1^2-q_{11} =42$ elements.
We record  the indices of basis elements of $A_1\cdot A_1$ by the set $$S=\{(1,2),\ (1,5),\ (1,6),\ (1,7),\ (2,3),\ (3,5),\ (4,5)\}.$$
A basis of $\Ker\phi_1$, as defined in \eqref{def p1}, is given by elements of two types:
\begin{itemize}
\item[(1)] $[z^1_i]\otimes[z^1_j],\,\quad\text{for}\  1\leq i,j\leq 7,\quad (i,j)\not\in S\quad\text{and}\quad (j,i)\not\in S$; \vspace{0.05in}
\item[(2)] $[z^1_i]\otimes[z^1_j]+[z^1_j]\otimes[z^1_i],\,\quad \text{for}\quad (i,j)\in S$.
\end{itemize}
Therefore, according to the types above, one defines
$\widetilde p^1_{s_{(i,j)}}=(\widetilde p^1_{s_{(i,j)}k})_{k=1,\dots,7}$ as:
\begin{itemize}
  \item[(1)] $\widetilde p^1_{s_{(i,j)}k}=
  \begin{cases}z^1_j&\text{if}\quad  k=i\\
  0&\text{if}\quad k \not=i,
  \end{cases}\qquad$
  for all $ 1\leq i,j\leq 7,\quad (i,j)\not\in S\quad\text{and}\quad (j,i)\not\in S$; \vspace{0.05in}
\item[(2)]
$\widetilde p^1_{s_{(i,j)}k}=
\begin{cases}
  z^1_j&\text{if}\quad k=i\\
  z^1_i&\text{if}\quad k=j\\
        0&\text{if}\quad k \not=i,j,
\end{cases}\quad$
for all $(i,j)\in S$.
\end{itemize}
\chunk Next, we find the nonzero elements $\widetilde \pi^3_s\in K_3$ defined in  \eqref{def pi3} as follows. For the elements coming from Koszul relations in $K$, we choose $\widetilde \pi^3_s=0$ for those elements.
The only nonzero products in $K_2$  come from elements of type (1) and they are
$$ z^1_2\wedge z^1_4=x^2y^2T_{12}=-z^1_4\wedge z^1_2\qquad\text{and}\qquad z^1_5\wedge z^1_7=xz^4T_{13}=-z^1_7\wedge z^1_5.$$
Therefore, we choose the following nontrivial liftings in $K_3$:
$$
\widetilde \pi^3_{s_{(2,4)}}= xz^2T_{123}=-\widetilde \pi^3_{s_{(4,2)}}\qquad\text{and}\qquad \widetilde \pi^3_{s_{(5,7)}}= -x^2yzT_{123}=-\widetilde \pi^3_{s_{(7,5)}}.$$

\chunk We describe next the elements of $A_3$. 

\vspace{0.1in}

\noindent
{\bf Claim 1.} All the products in $A_1\cdot A_1\cdot A_1$ are zero.
\begin{proof}[\it Proof of Claim 1.]
First, all the products in $A_1\cdot A_1\cdot A_1$ not involving $[z^1_1]$  have the coefficients of $[T_{ijk}]$ of degree six, and $R_6=0$.  Thus, all such products are zero.
Second, the only pairs $(i,j)$ with $1<i<j\leq 7$ such that $[z^1_i]\wedge[z^1_j]$ is nonzero in $A_1\cdot A_1$ are in the set $\{(2,3), (3,5), (4,5)\}$, but $[z^1_1\wedge z^1_3]=[z^1_1\wedge z^1_4]=0$. Hence, all products involving $[z^1_1]$ at least once are zero. The claim now follows.
\end{proof}

Therefore, $A_1\cdot A_2=A_1\cdot \overline{A_2}$, it has rank $q_{12}=10$, and its basis is given by the classes of
\begin{align*}
z^1_1\wedge z^2_3&=xz^2wT_{124},&
z^1_1\wedge z^2_4&=yz^2wT_{134},&
z^1_1\wedge z^2_5&=x^2ywT_{124}-xz^2wT_{134},\\
z^1_1\wedge z^2_6&=x^2zwT_{134},&
z^1_2\wedge z^2_1&=x^2ywT_{124},&
z^1_2\wedge z^2_2&=x^2y^2T_{124},\\
z^1_3\wedge z^2_6&=-x^2yzwT_{123},&
z^1_5\wedge z^2_1&=yz^2wT_{234},&
z^1_5\wedge z^2_2&=y^2z^2T_{234},\\
z^1_5\wedge z^2_3&=-xz^4T_{123}=z^1_4\wedge z^2_6.
\end{align*}
A basis of $\overline {A_3}$ is $\{[z_t^3]\}_{t=1,\dots,4}$, where
\begin{align*}
  z_1^3&=yz^2wT_{123},&
  z_2^3&=y^2z^2T_{123},&
  z_3^3&=yz^2wT_{124},&
  z_4^3&=z^4T_{234}.
\end{align*}

\chunk
Using Proposition $\ref{ker2}$, we describe the elements $\widetilde p^2_{ui}$,  for $1\leq u\leq a_1a_2-a_1q_{11}-q_{12}+b=46+b$ and $1\leq i\leq 7$, as defined in \eqref{def p2}. 

\vspace{0.1in}

\noindent
{\bf Claim 2.} Let $\CA$, $\CB$ be as in Proposition \ref{ker2} and let $b$ be as in Summary~\ref{summary}.
Then
$$ \Span_\kk \CA=A_1\otimes(A_1\cdot A_1)\quad\text{and}\quad b=0.$$

\begin{proof}[Proof of Claim 2] It is clear that $\Span_\kk \CA\subseteq A_1\otimes(A_1\cdot A_1)$.
For $1\leq i,j,h\leq 7$, any nonzero element  $[z^1_i]\otimes[z^1_j]\wedge[z^1_h]$ in $A_1\otimes(A_1\cdot A_1)$ has $(j,h)\in S$ or $(h,j)\in S.$ We show that each such element is in $\Span_\kk\CA.$

In the case $\{(i,j), (i,h)\}\not\subseteq S$ we have
$$[z^1_i]\otimes[z^1_j]\wedge[z^1_h]=\sum_{k=1}^{7}[z^1_k]\otimes[\widetilde p^1_{s_{(i,j)}k}]\wedge[z^1_h],$$
as in case (1) of \ref{p1 ex}.

By Koszul relation, the case $(i,h)\in S$ reduces to the case $(i,j)\in S$ in which we have
$$[z^1_i]\otimes[z^1_j]\wedge[z^1_h]=[z^1_i]\otimes[z^1_j]\wedge[z^1_h]+[z^1_j]\otimes[z^1_i]\wedge[z^1_h]=\sum_{k=1}^{7}[z^1_k]\otimes[\widetilde p^1_{s_{(i,j)}k}]\wedge[z^1_h],$$
as in case (2) of \ref{p1 ex}. The first equality above follows from the proof of Claim 1.
This implies  $\Span_\kk \CA=A_1\otimes(A_1\cdot A_1)$, so  $\rank_\kk(\Span_\kk \CA)=a_1q_{11}$. By Proposition \ref{ker2}(b) we get $b=0$.
\end{proof}

Remark that Claim 2 implies Claim 1, since $\Span_\kk\CA\subseteq\Ker\phi_2$.

By definition of $\overline{A_2}$ as in Summary~\ref{summary} and respectively Claim 2, the following equalities hold:
 $$A_1\otimes A_2=\big(A_1\otimes (A_1\cdot A_1)\big)\oplus (A_1\otimes \overline{A_2})=(\Span_\kk \CA)\oplus (A_1\otimes \overline{A_2}).$$
By Proposition \ref{ker2},  $\Ker\phi_2=(\Span_\kk \CA)\oplus B$, where $B=(\Ker\phi_2)\cap (A_1\otimes \overline{A_2})$. In order to find a basis for $B$, we record the indices of basis elements of $A_1\cdot \overline{A_2}$ by the set
$$U=\{(1,3), (1,4), (1,5), (1,6), (2,1), (2,2), (3,6), (5,1), (5,2), (5,3),(4,6)\}.$$
A basis of $B$ is given by elements of two types:
\begin{itemize}
\item[$(1')$] $[z^1_i]\otimes[z^2_\ell]$ for all $1\leq i\leq 7$ and $1\leq\ell\leq 8$ such that $(i,\ell)\not\in U$; \vspace{0.05in}

\item[$(2')$] $[z^1_4]\otimes[z^2_6]-[z^1_5]\otimes[z^2_3]$.
\end{itemize}
Therefore, according to the types above, one defines $\widetilde p^2_{u_{(i,\ell)}}=(\widetilde p^2_{u_{(i,\ell)}k})_{k=1,\dots,7}$ as:
\begin{itemize}
\item[$(1')$] $\widetilde p^2_{u_{(i,\ell)}k}=
\begin{cases}
  z^2_\ell&\ \text{if}\ k=i\\
        0& \ \text{if}\ k\not=i,
\end{cases} $ for all $1\leq i\leq 7$ and $1\leq\ell\leq 8$ such that $(i,\ell)\not\in U$; \vspace{0.05in}

\item[$(2')$] $\widetilde p^2_{u_{(4,6)}k}=
\begin{cases}
  z^2_6&\ \text{if}\ k=4\\
  -z^2_3&\ \text{if}\ k=5\\
  0& \ \text{if}\ k\not=4,5.
\end{cases} $
\end{itemize}
\chunk It is easy to check that in $K_3$  we have the equalities:
\begin{itemize}
\item[$(1')$] $z^1_i\wedge z^2_\ell=0$ for all $1\leq i\leq 7$ and $1\leq\ell\leq 8$ such that $(i,\ell)\not\in U$; \vspace{0.05in}

\item[$(2')$] $z^1_4\wedge z^2_6 - z^1_5\wedge z^2_3=0$.
\end{itemize}
Therefore, we may choose $\widetilde \pi^4_u=0$ for all $1\leq u\leq 46.$

\chunk A similar argument as in the proof of Claim 1  gives  $A_1\cdot A_3=0$, and hence $q_{13}=0$.
Moreover, $A_2\cdot A_2$ has rank $q_{22}=2$ and its basis is given by the classes of
$$
  z^2_1\wedge z^2_6=-x^2yzwT_{1234}\qquad\text{and}\qquad z^2_2\wedge z^2_6=-xz^4T_{1234}.
$$
As $A_1\cdot A_3=0$, any element in  $\langle A_1, A_1, A_1\rangle\subseteq A_4$ has a representative in $K_4$ given by
\begin{equation}
  \label{massey eq}
\sum_{s=1}^{42}\widetilde\pi^3_s\wedge  p^1_s
\quad
\text{such that}
\quad
\sum_{s=1}^{42}[\widetilde p^1_{sk}\wedge p^1_s]=0,\ \text{for all}\ 1\leq k\leq 7,
\end{equation}
where $p^1_s\in\Ker\partial_1^K$ is as in Proposition \ref{massey}, and $\widetilde p^1_{si}$ and  $\widetilde \pi^3_{s}$ are  defined in  \eqref{def p1} and \eqref{def pi3} respectively. As described above, the only nontrivial lifting elements $\widetilde\pi^3_s \in K_3$ are
$\widetilde\pi^3_{s_{(2,4)}}=-\widetilde\pi^3_{s_{(4,2)}}$ and
$\widetilde\pi^3_{s_{(5,7)}}=-\widetilde\pi^3_{s_{(7,5)}}$.
Since all of them contain $T_{123}$, only the component of $p^1_s$ that contains $z^1_1=wT_4$ contributes to a nonzero Massey product.
Thus, there exist $\alpha,\beta\in R$ such that
\begin{align*}
  \sum_{s=1}^{42}[\widetilde\pi^3_s\wedge  p^1_s] &=[\widetilde\pi^3_{s_{(2,4)}}\wedge p^1_{s_{(2,4)}}]+[\widetilde\pi^3_{s_{(5,7)}}\wedge p^1_{s_{(5,7)}}]\\
  &=[\widetilde\pi^3_{s_{(2,4)}}\wedge z^1_1]\wedge[\alpha]+[\widetilde\pi^3_{s_{(5,7)}}\wedge z^1_1]\wedge[\beta]\\
  &=[xz^2wT_{1234}]\wedge[\alpha]+[x^2yzw T_{1234}]\wedge[\beta]\\
  &=\langle [z^1_2],[z^1_4],[z^1_1]\rangle\wedge[\alpha]-[z^2_1\wedge z^2_6]\wedge[\beta].
\end{align*}
The last equality follows from the following computation:
\begin{align*}
  \langle [z^1_2],[z^1_4],[z^1_1]\rangle&=\{[(\partial_3^K)^{-1}(z^1_2 \wedge z^1_4)\wedge z^1_1 + z^1_2 \wedge (\partial_3^K)^{-1}(z^1_4 \wedge z^1_1)]\}\\
  &=\{[\widetilde\pi^3_{s_{(2,4)}}\wedge z^1_1 + z^1_2 \wedge \widetilde\pi^3_{s_{(4,1)}}]\}\\
  &=\{[xz^2wT_{1234}]\}.
\end{align*}
By abusing notation, we write  $\langle [z^1_2],[z^1_4],[z^1_1]\rangle=[xz^2wT_{1234}]$, which is not in $A_1\cdot A_3+A_2\cdot A_2$, as showed in \cite[Section 7]{Av74}.  It is clear now that the rank of the $\kk$-vector space  
$$A_1\cdot A_3+A_2\cdot A_2+\text{Span}_\kk\langle A_1\cdot A_1\cdot A_1\rangle$$ 
is $a=3$ and its basis is given by
$$[z^2_1]\wedge [z^2_6],\quad [z^2_2]\wedge [z^2_6],\quad \text{and}\quad \langle [z^1_2],[z^1_4],[z^1_1]\rangle.$$
Thus, a basis of $\overline{A_4}$ is $\{[z_r^4]\}_{r=1,2}$  where
$$
  z_1^4=yz^2wT_{1234}\qquad\text{and}\qquad  z_2^4=y^2z^2T_{1234}.
$$

\chunk We conclude that for the ring $R$ discussed in this section, and by Proposition \ref{prop:P} we have:
\begin{align*}
  a_1&=7, &a_2&=15, &a_3&=14, &a_4&=5,\\
  q_{11}&=7, &q_{12}&=10, &q_{13}&=0, &q_{22}&=2,\\
  a&=3, &b&=0, &&&& \\
  \CP_3&=7, &\CP_{4}&=45, &\CP_5&=221. &&
\end{align*}

%%%%%%%%%%%%%%%%%%%%%%%%
 \section*{Acknowledgement}
 The authors thank Hailong Dao for inspiring them to work on this project and for very fruitful discussions.
 They also thank Frank Moore for helping them with the {\tt DGAlgebras} package \cite{DGA} to check the computations, and thank the referee for helpful suggestions. 
 The first author was supported by the Naval Academy Research Council in Summer 2020.

%%%%%%%%%%%%%%%%%%%%%%%%%%%%%%%%%%%%%%%%%%%%%%%
%%%%%%%%%%%%%%%%%%%%%%%%%%%%%%%%%%%%%%%%%%%%%%%

\end{document}